\documentclass[a4paper]{amsart}

\usepackage{amssymb}
\usepackage{amsmath}

\title{Q-prime curvature on CR manifolds}
\author{Kengo Hirachi}
\thanks{
Graduate School of Mathematical Sciences, 
The University of Tokyo,
3-8-1 Komaba, Megro, Tokyo 153-8914 JAPAN}

\keywords{Q-curvature; CR manifolds; pseudo-Einstein structures; CR invariant differential operators; pluriharmonic functions; strictly pseudoconvex domains} 

\subjclass[2010]{Primary~32V05, Secondary~32T15}

\numberwithin{equation}{section}


\newtheorem{lem}{Lemma}[section]
\newtheorem{thm}[lem]{Theorem}
\newtheorem{prop}[lem]{Proposition}

\newtheorem{defn}[lem]{\it Definition}
\newtheorem{rem}[lem]{\it Remark}



\newcommand{\calE}{\mathcal{E}}

\newcommand{\calH}{\mathcal{H}}

\newcommand{\calJ}{\mathcal{J}}
\newcommand{\calK}{\mathcal{K}}
\newcommand{\calL}{\mathcal{L}}

\newcommand{\calN}{\mathcal{N}}
\newcommand{\calO}{\mathcal{O}}
\newcommand{\calP}{\mathcal{P}}

\newcommand{\calPE}{\mathcal{PE}}

\newcommand{\bfP}{\mathbf{P}}
\newcommand{\bfS}{\mathbf{S}}


\newcommand{\bC}{\mathbb{C}}

\newcommand{\bP}{\mathbb{P}}
\newcommand{\bR}{\mathbb{R}}

\newcommand{\bZ}{\mathbb{Z}}

\newcommand{\bth}{\boldsymbol{\th}}
\newcommand{\bh}{\boldsymbol{h}}


\newcommand{\SU}{\operatorname{SU}}

\renewcommand{\Re}{\operatorname{Re}}
\renewcommand{\Im}{\operatorname{Im}}
\newcommand{\Ric}{\operatorname{Ric}}
\newcommand{\contr}{\operatorname{contr}}

\newcommand{\tf}{\operatorname{tf}}
\newcommand{\lp}{\operatorname{lp}}
\newcommand{\Scal}{\operatorname{Scal}}

\newcommand{\wt}{\widetilde}
\newcommand{\wf}{\wt f}

\newcommand{\rs}{r_\sharp}

\newcommand{\pa}{\partial}
\newcommand{\wh}{\widehat}
\renewcommand{\th}{\theta}
\newcommand{\up}{\Upsilon}
\newcommand{\conj}{\overline}
\newcommand{\ab}{{\alpha\conj\beta}}
\newcommand{\CR}{\lrcorner\,}

\begin{document}
\maketitle
\begin{center}
{\em Dedicated to Mike Eastwood on the occasion of his 60th birthday}
\end{center}

\begin{abstract}
$Q$-prime curvature, which was introduced by J. Case and P. Yang, is a local invariant of pseudo-hermitian structure on CR manifolds that can be defined only when the $Q$-curvature vanishes identically.  It is considered as a secondary invariant on CR manifolds and, in 3-dimensions, its integral agrees with the Burns-Epstein invariant, a Chern-Simons type invariant in CR geometry.  We give an ambient metric construction of the $Q$-prime curvature and study its basic properties.  In particular, we show that, for the boundary of a strictly pseudoconvex domain in a Stein manifold, the integral of the $Q$-prime curvature is a global CR invariant, which generalizes the Burns-Epstein invariant to higher dimensions.
\end{abstract}

\section{Introduction}
Tom Branson introduced the concept of $Q$-curvature in conformal geometry 
around 1990 \cite{Br} in connection with the study of conformal anomaly of determinants of conformally invariant differential operators. Since then $Q$-curvature has played an increasing and central role in geometric analysis on conformal manifolds. 
Around the same time, we have introduced in \cite{H1} a pseudo-hermitian invariant on CR manifolds, which was coincidentally denoted by $Q$, in a study of the asymptotic expansion of the Szeg\"o kernel of strictly pseudoconvex domains. 
The CR version of $Q$ was later shown to agree with Branson's $Q$-curvature defined with respect to Fefferman's conformal structure on a circle bundle over CR manifolds
 \cite{FH}.  Using this correspondence, one can translate the properties of conformal $Q$-curvature to the CR analogue;
  see \cite{GG}, \cite{HPT}.
However, there has been an important missing piece in this correspondence.  
In conformal geometry, the integral of the $Q$-curvature, called the total $Q$-curvature, is a global conformal invariant
and its first variation under the deformation of conformal structure is give by the Fefferman-Graham obstruction tensor \cite{GH}. On the other hand, the total CR $Q$-curvature always vanishes for domains in $\bC^N$
and has no relation to the obstruction function, which arises in the asymptotic analysis of the complex Monge-Amp\`ere equation \cite{F1}, \cite{Gr2}; see also \eqref{asymp-exp}. Moreover, CR $Q$-curvature identically vanishes for a natural choice of contact forms, called pseudo-Einstein contact forms, on the boundary of a domain in $\bC^N$; see \cite{FH}, \cite{CC}. 

We claim that the missing piece can be filled by $Q$-prime curvature, which was first introduced by Case and Yang \cite{CaY}  on $3$-dimensional CR manifolds in the process of generalizing the work of Branson, Fontana and Morpurgo \cite{BFM} on the conditional intertwinors ($P$-prime operators defined below) acting on CR pluriharmonic functions on the CR sphere $S^{2n+1}$; see also Remark \ref{CYrem}.
The idea is very simple: the vanishing of $Q$-curvature for pseudo-Einstein contact form $\th$ enables us to define a secondary local invariant of $\th$. 
 We give here a heuristic argument for defining $Q$-prime curvature.
We start by recalling the definition of $Q$-curvature in CR geometry.  Let $D\subset\bC^{n+1}$ be a strictly pseudoconvex domain with $C^\infty$ boundary $M=\pa D$.  Then, by Cheng and Yau \cite{ChY}, $D$ admits a defining function $r$ (positive in $D$ and $dr\ne0$ on $\pa D$)  such that $-i\pa\conj\pa\log r$ is Einstein-K\"ahler with negative scalar curvature, which we normalized to be $-n-2$.  The defining function $r$ also gives a metric on the trivial bundle $\bC^*\times\conj D$:  Denoting the fiber variable by $z^0\in\bC^*$, we define a Ricci-flat Lorentz K\"ahler metric, called 
{\em the ambient metric}, on $\bC^*\times\conj D$
by
$$
\wt g=-{i}\pa\conj\pa\big( |z^0|^2 r(z)\big).
$$
We can use this metric to construct CR invariant differential operators on $M$. Let $\wt\Delta$ be the Laplacian of $\wt g$. Then, for an integer $2m\in[-n,0]$ and a function $f$ on $\conj D$, 
$$
\Big(\wt\Delta^{n+2m+1} |z^0|^{2m} f\Big)\Big|_{\bC^*\times M},
$$ 
is shown to depend only on the boundary value of $f$ and is homogeneous in $z^0$.
Since the functions on $\bC^*\times M$ with homogeneity in $z^0$ can be identified with  densities on $M$, we obtain a family of CR invariant differential operators
$$
P_{n+2m+1}\colon\calE(m)\to\calE(-n-m-1).
$$
Note that, if $m=0$, then $1\in\calE(0)$ and $P_{n+1}1=0$. Branson's idea is to consider the $0$th order term of
$P_{n+1}$ for higher dimensions $N+1$ and take the ``limit as $N\to n$'' after factoring out $(N-n)$. A formal definition of $Q$-curvature is
$$
Q_n=\lim_{N\to n}\frac{1}{N-n}\Big(\wt\Delta^{n+1} |z^0|^{2(n-N)}\Big)\Big|_{\bC^*\times M^{2N+1}}.
$$
One way to justify this limit is to consider Taylor expansion in $N-n$.  Using
$$
|z^0|^{2(n-N)}=e^{(n-N)\log|z^0|^2}=\sum_{k=0}^\infty\frac{(N-n)^k}{k!}
(-\log |z^0|^2)^k,
$$
we have a ``formal expansion''
$$
\big(\wt\Delta^{n+1}|z^0|^{2(n-N)}\big)|_{\bC^*\times M^{2n+1}}=\sum_{k=0}^\infty\frac{(N-n)^k}{k}
Q^{(k)},
$$ 
where
$$
Q^{(k)}=\wt\Delta^{n+1}(-\log |z^0|^2)^k\big|_{\bC^*\times M}.
$$
While the expansion does not have clear meaning, the coefficients $Q^{(k)}$ are standard quantities defined on an $(n+2)$-dimensional Lorentz K\"ahler manifold.
Clearly, $Q^{(0)}=0$. The second term $Q^{(1)}$ gives the CR $Q$-curvature, which can be also considered as the $Q$-curvature of the Lorentz metric on
$S^1\times M$ given by the restriction of $\wt g$.
However, $Q^{(1)}=0$ because $\log |z^0|^2$ is pluriharmonic.
Hence the leading term of the expansion is $Q^{(2)}$,
which we define to be the {\em Q-prime curvature} and denote by $Q'$.
We can see that $Q'$ is a pseudo-hermitian invariant of the contact form
$\th=(i/2)(\pa-\conj\pa)r|_M$, which is
pseudo-Einstein (Definition \ref{pe-def}) due to the Ricci-flatness of $\wt g$.
This definition of $Q'$ can be generalized to embedded CR manifolds with pseud-Einstein contact forms (Definition \ref{def-Qp}).

 A similar argument can be used to define a new differential operator acting on the boundary values of pluriharmonic functions, or CR pluriharmonic functions;
we denote the space of such functions by $\calP$. If $f\in\calP$, then its pluriharmonic extension $\wf$ is uniquely determined; moreover, at each point of $M$, the Taylor series of $\wt f$ is determined by that of $f$ by solving $\pa\conj\pa \wt f=0$. So we can define a nontrivial differential operator on $\calP$ by
$$
P'f=-\wt\Delta^{n+1}(\wf\log |z^0|^2)\big|_{\bC^*\times M},
$$
which we call the {\em $P$-prime operator} (Definition \ref{def-pp}).

The definitions of $Q'$ and $P'$ depend on the choice of $r$ and are not CR invariant.  Since the K\"ahler potential has the ambiguity of adding pluriharmonic functions, the choice of $r$ also has the ambiguity
$\wh r=e^\up r$, where $\up$ is pluriharmonic on $\conj D$.  Under this change of defining function, we have transformation rules (Propositions \ref{trans-PP-prop} and  \ref{transQP})
\begin{equation*}
\begin{aligned}
\wh P'f&=P'f+P_{n+1}(\up f),
\\
\wh Q'\ &=Q'+2P'\up+P_{n+1}(\up^2).
\end{aligned}
\end{equation*}
A crucial fact is that $P'$ and $P_{n+1}$ are formally self-adjoint respectively on
$\calP$ and $C^\infty(M)$ (Theorem \ref{selfP}), and $P'1=P_{n+1}1=0$.  It follows that
the integral, which we call the {\em total $Q$-prime curvature},
$$
\conj{Q}'(M)=\int_M Q'
$$
is a CR invariant of $M$. 
By analogy with the fact that the total $Q$-curvature in conformal geometry is given by the logarithmic term in the asymptotic expansion of the volume of conformally compact Einstein manifold,
one may hope that $\conj{Q}'$ is a coefficient of the asymptotic volume expansion of  $D$.  It is indeed the case, but is not with respect to the volume form $dv_g$ of $g$.  We will show, in Theorem \ref{totalQpthm}, that $\conj{Q}'$ appears in the  expansion with respect to the volume form weighted by $\|d\log r\|^2$, the squared norm of  the 1-form $d\log r$ for $g$:
\begin{equation}\label{asym-tPQ}
\int_{r>\epsilon}
\|d\log r\|^2dv_g=\sum_{j=0}^{n}a_j\,\epsilon^{j-n-1}+
c_n\,\conj{Q}'\,\log\epsilon+O(1),
\end{equation}
where $c_n=(-1)^n/(n!)^{3}$.
This formula can be applied to compute the variation of $\conj{Q}'$ under the deformation of CR structures,
which generalizes the formula \eqref{QPvar}  in dimension $3$ stated below.  The detail will  appear in our forthcoming paper with Yoshihiko Matsumoto and Taiji Marugame.

In the case $M$ has dimension 3, we can explicitly write down $P'$ and $Q'$ in terms of the Tanaka-Webster connection. With respect to the contact form $\th=({i}/{2})(\pa-\conj\pa)r|_M$ for $r$ given as above, we have 
\begin{align}\label{PP3}
P'f&=\Delta_b^2f-\Re\nabla^1(\Scal \nabla_1f-2iA_{11}\nabla^1f),
\\
\label{Q4form}
Q'&=\Delta_b \Scal+\frac12 \Scal^2-2|A|^2.
\end{align}
Here $\Delta_b$ is the sub-Laplacian, $\nabla$ is the Tanaka-Webster connection,
and $\Scal$, $A$ are respectively the scalar curvature and torsion of the connection.
(These formulas were first derived by Case and Yang.)
It turns out that the total $Q$-prime curvature agrees with the Burns-Epstein invariant $\mu(M)$, \cite{BE1}, up to a universal constant multiple:
$$
\conj{Q}'(M)=\int_M(\frac12 \Scal^2-2|A|^2)\th\wedge d\th=-8\pi^2\mu(M).
$$
From this fact we can also obtain the renormalized Gauss-Bennet formula:
\begin{equation}\label{GBthm}
\int_D c_2(R_g)-\frac13 c_1(R_g)=\chi(D)-\frac{1}{2\pi^2}\conj{Q}'(M),
\end{equation}
where $c_k(R_g)$ is the $k$th Chern form defined from  $g$; see \cite{BE2}.
Another consequence is the variational formula for a smooth family of strictly pseudoconvex domains $\{D_{t}\}_{t\in\bR}$ in $\bC^2$.  If $D=D_0$, then $D_t$ for small $t$ can be parametrized by a density
$f\in\calE(1)$ on $M=\pa D$ and the first variation of $\conj{Q}'(M_t)$,
$M_t=\pa D_t$, is given by
\begin{equation}\label{QPvar}
\left.\frac{d}{dt}\right|_{t=0}\conj{Q}'(M_t)=2\Re \int_M f\calO,
\end{equation}
where $\calO$ is the obstruction function of Fefferman; see \eqref{asymp-exp} and
\S 6.5.
These two properties are good evidence that $Q'$ is a natural object in CR geometry and contains important information that cannot be captured by the $Q$-curvature, which comes from the conformal geometry.

We should mention the generalization of the Burns-Epstein invariant to higher dimensions by themselves \cite{BE2}.  The invariants are defined by renormalizing the Chern class of complete Einstein-K\"ahler metric on the strictly pseudoconvex domains in $\bC^{n+1}$ as in \eqref{GBthm}. The construction uses a transgression formula given on the Cartan bundle over the boundary; hence it is not easy to compare the integrand with $Q'$.  Recently, a much more explicit formula for the transgression is found by
Marugame \cite{M}; but the relation to $Q'$ is yet to be studied.

In this introduction, for simplicity, we have formulated the results for the boundaries of strictly pseudoconvex domains in $\bC^{n+1}$. But we prove all results for domains in Stein manifolds, or in some cases under weaker assumptions; see \S7 for  such generalizations.  We also need to mention the fact that the defining function $r$ given by Cheng-Yau possibly has weak singularity at the boundary.
In fact, $r$ has asymptotic expansion at the boundary
\begin{equation}\label{asymp-exp}
r\sim \rho+\rho\sum_{j=1}^\infty \eta_j(\rho^{n+2}\log\rho)^j,
\end{equation}
where $\rho$ is a smooth defining function and $\eta_j\in C^\infty(\conj D)$; $\eta_1|_{\pa D}$ is called the obstruction function, which, in case $n=1$, agrees with the local CR invariant $\calO$ given in \eqref{QPvar} up to a non-zero constant multiple.
In the ambient metric construction of $Q'$, the numbers of the derivatives applied to $r$ is limited and the logarithmic terms do not contribute; so $r$ can be replaced by the smooth part $\rho$.
Thus, in this paper, instead of the Cheng-Yau solution $r$, we use a smooth defining function that satisfies the Einstein equation approximately at the boundary; see Proposition \ref{Ricci-prop}.
The smooth approximate solution is given by an explicit algebraic algorithm of Fefferman
\cite{F1} and unique modulo  $O(\rho^{n+3})$.  It implies in particular  that $Q'$ is locally determined by $\th$.
More detailed study of the expansion \eqref{asymp-exp}  is given in \cite{LM}, \cite{Gr2} and \cite{H2}.

This paper is organized as follows.  
In \S2, we give a review of the ambient metric for CR manifolds.
Then, in \S3, we construct CR invariant operators via the ambient space,
which include GJMS (or Gover-Graham) operators between CR densities and 2-from valued invariant operator on functions that characterizes CR pluriharmonic functions.  We define $P$-prime operator in \S4 and prove its self-adjointness. In
\S5, we define $Q$-prime curvature and study its basic properties: transformation law and invariance of its integral.  In \S6, we study the 3-dimensional case: explicitly write down $P'$ and $Q'$; describe the relation between $Q'$ and the Burns-Epstein integrand.  In the final section, \S7, we give an observation on Hartogs' extension theorem for pluriharmonic functions, which are used in the proof of the self-adjointness of $P'$. 

\begin{rem}\label{CYrem}\rm
The aim of Case and Yang \cite{CaY} is  to study $P'$ on 3-dimensional CR manifolds as an analogy of Paneitz operator  on conformal  4-dimensional manifolds as in Chang, Gursky and Yang \cite{ChGY}. They derive $P'$ and $Q'$ by following Branson's formal procedure: firstly, compute $P_2$ in all dimensions  by using tractor calculus of \cite{GG} and then formally setting the dimension to $3$.  All the results in \cite{CaY} are based on the explicit formulas of $P'$ and $Q'$ as given above; so the justification of Branson's argument is favorable but is not  inevitable in their work.
\end{rem}

\medskip

\noindent
{\em Notations.}
We use Einstein's summation convention and assume that 
\begin{itemize}
\item
uppercase Latin indices $I,J,K,\dots$ run from 0 to $n+1$;
\item
lowercase Latin indices
$j,k,l,\dots$
run from 1 to $n+1$; 
\item
lowercase Greek indices $\alpha,\beta,\gamma,\dots$ run from 1 to $n$. 
\end{itemize}
The letter $i$ denotes $\sqrt{-1}$.

\section{The ambient metric in CR geometry}

We start with a review of CR geometry and the ambient metric associated with it.
We basically follow Graham-Gover \cite{GG}, but we here try to make the definition of ambient metric more intrinsic to the complex structure. 
In the construction of $P'$ and $Q'$, the K\"ahler condition on the ambient metric is essential, and we mostly confine ourselves to the embedded CR manifolds.  

\subsection{The ambient metric}
Let $M$ be a $C^\infty$ manifold of dimension $2n+1$, $n\ge1$, and $\bC TM$ be the complexified tangent bundle.  A {\em CR structure} on $M$ is a complex $n$-dimensional subbundle $T^{1,0}$ of $\bC TM$ such that $T^{1,0}\cap T^{0,1}=\{0\}$, where $T^{0,1}=\conj{T^{1,0}}$.  In the following, we assume that $T^{1,0}$ is {\em integrable} in the sense that the sections of $T^{1,0}$ are closed under Lie bracket.  We set $\calH=\Re T^{1,0}$ and assume that there is a real one form $\th$ such that $\ker\th=\calH$.
The {\em Levi form} of $\th$ is the hermitian form on  $T^{1,0}$ defined by
$$
L_\th(Z,W)=-id\th(Z,\conj{W}).
$$
Under the scaling $\wh\th=e^\up\th$, $\up\in C^\infty(M)$,  we have
$$
L_{\wh\th}=e^{\up}L_\th.
$$
Thus the conformal class of Levi form is determined by $T^{1,0}$.
We assume that $T^{1,0}$ is {\em strictly pseudoconvex} in the sense that the Levi form is positive definite for a choice of $\th$; such a $\th$ is called a {\em pseudo-hermitian structure}, or a {\em contact form}, which is always assumed to be positive in this sense. 

To define the ambient metric, we further need to assume that $M$ is embedded in a complex manifold $X$ of dimension $n+1$, i.e., 
$T^{1,0}$ is given by $\bC TM\cap T^{1,0}X$, where $T^{1,0}X$ is the holomorphic tangent bundle of $X$.  In this situation, a defining function $\rho\in C^\infty(X)$ of $M$, which is positive on the pseudoconvex side, gives a contact form 
\begin{equation}\label{normalrho}
\th=\frac{i}{2}(\pa-\conj\pa)\rho|_{TM}.
\end{equation}
Conversely, if $\th$ is a contact form of $M$, we may find a defining function $\rho$ that satisfies \eqref{normalrho}; such a $\rho$ is determined to the first jet along $M$ and we say that {\em $\rho$ is normalized by $\th$}.  Note that the embeddability follows from the integrability if $M$ is compact and $n\ge2$ (\cite{Bo}, \cite{HL}); moreover, \cite[Th.\ 8.1]{Le} shows that $X$ is taken to be projective algebraic manifold and $M$ is realized as the boundary of a strictly pseudoconvex domain.
In the case $n=1$, it holds only when $M$ is embedded in $\bC^N$ for some $N$
and is not always the case. (We will treat the $n=1$ case independently in \S \ref{three-dim} without assuming the embeddability.)

To motivate the definition of the ambient metric, we first consider the model case 
$M_0\subset\bP^{n+1}$ defined by the quadric 
$$
q(\zeta,\conj\zeta)=\zeta^0\conj\zeta^{n+1}+\zeta^{n+1}\conj\zeta^{0}-\sum_{j=1}^n\zeta^j\conj\zeta{}^j=0
$$
with respect to the homogeneous coordinates $[\zeta^0:\zeta^1:\dots:\zeta^{n+1}]$.
We can identify the CR manifold $M_0$ with the sphere in $S^{2n+1}\subset\bC^{n+1}$, or
the compactification of the hyperquadric in $\bC^{n+1}$.
The {\em ambient space} of $M_0$ is defined to be the tautological bundle  with the zero section removed $\calO(-1)^*=\bC^{n+2}\setminus\{0\}$. We set 
$$
\calN=\{\zeta\in\bC^{n+2}\setminus\{0\}:q(\zeta,\conj\zeta)=0\},
$$
which is the restriction of $\calO(-1)^*$ over $M_0$.
The {\em ambient metric} is the flat Lorentz K\"ahler metric 
$$
\wt g=-i\pa\conj\pa q.
$$
(Here and in the following, we will not make any distinction between K\"ahler metric and form; in our convention, the K\"ahler form $\omega_{\wt g}=-i\pa\conj\pa q$ corresponds to the metric tensor
$\wt g=\wt g_{I\conj J}(d\zeta^I\otimes d\conj\zeta^J+d\conj\zeta^J\otimes d\zeta^I)$ with $\wt g_{I\conj J}=-\pa_I\pa_{\conj J}q$.)  The special unitary group $\SU(n+1,1)$ for the hermitian form $-q(\zeta,\conj\zeta)$ acts on $(\bC^{n+2}\setminus\{0\},\wt g)$ as holomorphic isometries that preserves $\calN$, thus induces CR automorphisms of $M_0$. Moreover we see that the CR automorphism group of $M_0$ is $\SU(n+1,1)/\bZ_{n+2}$.

For a general embedded CR manifold $M\subset X$, we define the ambient space as a fractional power of the canonical bundle with the zero section removed.  We assume that, near $M$ in $X$, there is a bundle $\calL_X$ satisfying $\calL^{n+2}_X=\calK_X$. (Such a bundle $\calL_X$ exists locally and it is sufficient for our purpose; see Remark \ref{canonicalbundlerem}.) The {\em ambient space} of $M$ is defined to be the total space of the $\bC^*$-bundle $\calL_X^*=\calL_X\setminus0$. The restriction of $\calL_X^*$ over $M$ is denoted by $\calN$.  If $\rho$ is a defining function of $M$, then its pullback to $\calL_X^*$, which is also denoted by $\rho$, is a defining function of $\calN$.  Note that $\calN$ is a CR manifold of dimension $2n+3$, the Levi form of which is positive except for the fiber direction.

For $\lambda\in \bC^*$, we define the {\em dilation}
$\delta_\lambda\colon\calL_X^*\to\calL_X^*$ by scalar multiplication
$\delta_\lambda(\xi)=\lambda\xi$, and the space of functions of
{\em homogeneous degree }$(w,w)$, for $w\in\bR$, by
$$
\wt\calE(w)=\{f\in C^\infty(\calL^*_X,\bC): \delta_\lambda^*f=
|\lambda|^{2w}f \quad\text{for any}\ \lambda\in\bC^*\}.
$$
We often consider homogeneous functions defined only on $\rho\ge0$ and smooth up to the boundary $\rho=0$; such functions are also considered as elements of $\wt\calE(w)$.
In case $w=0$, $\wf\in\wt\calE(0)$ is constant on each fiber and
we will identify $\wf$ with a  function 
on $X$.

In the case $X=\bP^{n+1}$, we have $q\in\wt\calE(1)$, which is a natural defining function of $\calN$ such that $-i\pa\conj\pa q$ is flat.  For general CR manifold, we cannot hope to get a flat metric, but any defining function $\rs\in\wt\calE(1)$ of $\calN\subset\calL^*_X$ which is positive on the pseudoconvex side gives a Lorentz K\"ahler metric
$$
\wt g[\rs]=-i\pa\conj\pa\rs
$$
in a neighborhood of $\calN$ in $\calL_X^*$.  Let $\Ric(\wt g[\rs])$, or simply $\Ric[\rs]$, be the Ricci tensor of $\wt g[\rs]$. By following Fefferman \cite{F1}, we normalize $\rs$ by imposing the Einstein equation. 

\begin{prop}\label{Ricci-prop}
There exists a defining function $\rs\in\wt\calE(1)$ of $\calN$ that is positive on the pseudoconvex side and satisfies
\begin{equation}\label{Ric-eq}
\Ric[\rs]=O_+(\rho^n).
\end{equation}
Here $O_+(\rho^m)$ stands for a term of the form $\pa\conj\pa(\rho^{m+2}\phi)$
for a function $\phi\in\wt\calE(0)$.  Moreover, such an $\rs$ is unique modulo $O(\rho^{n+3})$ and, if one writes
\begin{equation}\label{Ric-obs}
\Ric[\rs]=i\eta\,\rs^{n}\pa\rs\wedge\conj\pa\rs+O(\rho^{n+1})
\end{equation}
with $\eta\in\wt\calE(-n-2)$, then $\eta|_\calN$ is determined by the CR structure.

\end{prop}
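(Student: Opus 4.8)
The plan is to turn \eqref{Ric-eq} into a complex Monge--Amp\`ere equation on the ambient space, solve it formally along $\calN$ by Fefferman's iteration, and read off $\eta$ from the resulting indicial analysis. For a defining function $\rs\in\wt\calE(1)$ of $\calN$ positive on the pseudoconvex side, the K\"ahler form $\wt g[\rs]=-i\pa\conj\pa\rs$ has Ricci form $\Ric[\rs]=-i\pa\conj\pa\log|\det(\pa_I\pa_{\conj J}\rs)|$. The quantity $\det(\pa_I\pa_{\conj J}\rs)$ is not a function but a section of $|\calK_{\calL_X^*}|^{-2}$; the hypothesis $\calL_X^{n+2}=\calK_X$ is exactly what makes $\calK_{\calL_X^*}$ holomorphically trivial near $\calN$, so there is a nowhere-zero holomorphic volume form $\Omega$ there, unique up to a holomorphic unit, and $J[\rs]:=|\det(\pa_I\pa_{\conj J}\rs)|/|\Omega|^2\in\wt\calE(0)$ is a genuine positive function near $\calN$. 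Since $\log|\Omega|^2$ is pluriharmonic, $\Ric[\rs]=-i\pa\conj\pa\log J[\rs]$, so \eqref{Ric-eq} is a consequence of $J[\rs]=1+O(\rho^{n+2})$; by Fefferman's identity (cf.\ \cite{F1}) this is, equivalently, the complex Monge--Amp\`ere/Einstein equation for the complete metric $-i\pa\conj\pa\log(1/\rho)$ on the base, solved to order $O(\rho^{n+2})$.

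First I would fix an initial defining function $\rs_0\in\wt\calE(1)$ positive on the pseudoconvex side (e.g.\ $\rho$ times the squared norm of a local nonvanishing section of $\calL_X$) and seek $\rs=\rs_0(1+\sum_{k\ge1}\rho^k a_k)$ with $a_k\in\wt\calE(0)$, i.e.\ functions on $X$; only the jets of the $a_k$ along $M$ enter, and they are extended off $M$ arbitrarily. Substituting into $J[\rs]=1$ and expanding in powers of $\rho$, the order-$\rho^k$ equation has principal part the linearization of $\rs\mapsto\log J[\rs]$ applied to $\rho^k a_k$. Writing the perturbation multiplicatively, $\rs\mapsto\rs e^{\epsilon\psi}$ with $\psi\in\wt\calE(0)$ real, the homogeneity (Euler) identity $\wt g^{I\conj J}(\pa_I\rs)(\pa_{\conj J}\psi)=0$ kills the first-order term and the linearized operator reduces, at leading order along $\calN$, to $-(\Delta_g-(n+2))$ where $\Delta_g$ is the Laplacian of the (approximate) complete Einstein metric. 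The classical indicial computation $\Delta_g(\rho^k\phi)=k(k-n-1)\rho^k\phi+O(\rho^{k+1})$ near $\calN$ then shows that the indicial factor of the order-$\rho^k$ equation is $-\bigl(k(k-n-1)-(n+2)\bigr)=-(k+1)(k-n-2)$, which is nonzero for $1\le k\le n+1$. Hence $a_1|_M,\dots,a_{n+1}|_M$ are successively and uniquely determined, producing $\rs$ with $J[\rs]=1+O(\rho^{n+2})$; since the corrections are $O(\rho^2)$, this $\rs$ is still a defining function positive on the pseudoconvex side, which proves the existence statement.

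At $k=n+2$ the indicial factor vanishes, so the order-$\rho^{n+2}$ equation cannot be solved for $a_{n+2}|_M$ but instead becomes a constraint on the data already fixed; the value it assigns is, up to the nonzero universal constant $(n+1)(n+2)$, exactly the coefficient $\eta$ of \eqref{Ric-obs}, and it is generically nonzero, so $\Ric[\rs]$ cannot be improved beyond $O_+(\rho^n)$. Since $a_{n+2}|_M$ is then free (and continuing forces the logarithmic terms of \eqref{asymp-exp}, which are irrelevant here), all Taylor coefficients of $\rs$ along $M$ of order $<n+3$ in $\rho$ are determined, i.e.\ $\rs$ is unique modulo $O(\rho^{n+3})$. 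For the CR invariance of $\eta|_\calN$: because $\log J[\rs]=O(\rho^{n+2})$, the form $\Ric[\rs]=-i\pa\conj\pa\log J[\rs]$, hence its leading coefficient $\eta$, is determined modulo $O(\rho^{n+1})$ by $\rs$ modulo $O(\rho^{n+3})$; and by the previous paragraph this is independent of every auxiliary choice --- of the smooth $\rho$ used to organize the expansion (a mere reparametrization of the Taylor series), of the local $(n+2)$-nd root $\calL_X$ of $\calK_X$ (any two are locally isomorphic, the isomorphism unique up to an $(n+2)$-nd root of unity), and of the volume form $\Omega$ (two choices differ by a holomorphic unit, changing $\log J[\rs]$ by a pluriharmonic function, hence not changing $\Ric[\rs]$ at all). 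Therefore $\eta|_\calN$ depends only on the CR structure of $M$.

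The main obstacle is the indicial identification in the second step: recognizing the linearized Monge--Amp\`ere operator as $\Delta_g-(n+2)$ and pinning the indicial roots of $\Delta_g$ to $0$ and $n+1$ at the boundary. This is the heart of Fefferman's algorithm; it is also what accounts for the role of the $(n+2)$-nd root in the definition of $\calL_X$ --- precisely what makes the Monge--Amp\`ere density a well-defined function on the ambient space --- and for the appearance of the critical exponent $n+2$. A secondary technical point is verifying the Euler identity used to discard the first-order term in the linearization, together with keeping the $\wt\calE(w)$-weights consistent throughout so that indeed $\rs\in\wt\calE(1)$ and $\eta\in\wt\calE(-n-2)$.
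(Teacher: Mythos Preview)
Your approach is correct and takes a genuinely different route from the paper. The paper works in local coordinates $(z^0,z)$, reduces \eqref{Ric-eq} to Fefferman's equation $\calJ_z[r]=1+O(\rho^{n+2})$ on the base, cites \cite{F1} for the iteration, checks coordinate independence via the explicit transformation law for $\calJ$, and patches with a partition of unity; for uniqueness it shows that any $\rs$ satisfying \eqref{Ric-eq} has $\log\calJ_z[r]$ pluriharmonic modulo $O(\rho^{n+2})$ and absorbs this defect into a holomorphic change of base coordinates to land back in Fefferman's equation. You instead work intrinsically: observe that $\calK_{\calL_X^*}$ is holomorphically trivial, define a global Monge--Amp\`ere functional $J[\rs]$, and redo Fefferman's iteration by identifying the linearization with a shift of the Laplacian of the asymptotically Einstein metric on the base and reading off the indicial roots $-1,\,n+2$. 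Your route avoids local patching and makes the indicial mechanism explicit; the paper's is shorter by outsourcing the iteration to \cite{F1}.

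One step needs sharpening. You establish only that $J[\rs]=1+O(\rho^{n+2})$ implies \eqref{Ric-eq}, and then prove uniqueness for the former equation. The converse reduction---that \eqref{Ric-eq} forces $J[\rs]$ to be $1$ (and not merely $e^u$ for some pluriharmonic $u$) modulo $O(\rho^{n+2})$---is exactly what the paper handles by writing $u=2\Re f$ and absorbing $e^f$ into a holomorphic coordinate change; your argument glosses over it. Relatedly, your trivializing volume form $\Omega$ is in fact canonical (in standard fiber coordinates it is $(z^0)^{n+1}\,dz^0\wedge dz$, which one checks is invariant under the transition $w^0=z^0\varphi$, $\varphi^{n+2}=(\det\Phi')^{-1}$); had $\Omega$ only been unique up to a holomorphic unit as you say, the solution of $J[\rs]=1+O(\rho^{n+2})$ \emph{would} depend on that choice, contrary to your final paragraph.
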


\begin{defn}
The {\em ambient metric} of $M$ is defined to be $\wt g[\rs]$ with respect to $\rs$ satisfying \eqref{Ric-eq}.
\end{defn}

Since $\eta|_\calN$ is a CR invariant, we cannot refine the error in the equation \eqref{Ric-eq}. So $\eta|_\calN$ is called
the {\em obstruction function}.

\begin{proof}
We first write the equation in terms of local coordinates $(z^J)=(z^0,z)$ of $\calL^*_X$, where $z=(z^j)=(z^1,\dots, z^{n+1})$ is a holomorphic coordinate system of $X$
and the fiber coordinate $z^0$ is defined with respect to the local section
$(dz^1\wedge\cdots\wedge dz^{n+1})^{1/(n+2)}$.  
Then we may write $\rs=|z^0|^2r(z)$ and we have
\begin{equation}\label{RicJeq}
\Ric\big[|z^0|^2 r\big]
=-i\pa\conj\pa\log \det\left(\frac{ \pa^2(-|z^0|^2 r)}{\pa z^J\pa\conj z^K}\right)\\
=-i\pa\conj\pa\log \calJ_z[r],
\end{equation}
where $\calJ_z$ be the complex Monge-Amp\`ere operator 
$$
\calJ_z[r]=(-1)^{n+2}\det\begin{pmatrix}r & \pa_j r\\ \pa_{\conj k}r & \pa_{j\conj k}r
\end{pmatrix}.
$$
Recall from \cite{F1} that there is a defining function $r$ such that
\begin{equation}\label{JeqC}
\calJ_z[r]=1+\wt\eta\,\rs^{n+2}
\end{equation}
for an $\wt\eta\in\wt\calE(-n-2)$.
The proof also shows that if $r$ satisfies \eqref{JeqC}, then
so does
$$
r+\psi\rho^{n+3},\quad  \psi\in C^\infty(\bC^{n+1}),
$$
and these defining functions give all solutions to \eqref{JeqC}.
Moreover, $\wt\eta|_\calN$ is independent of the choice of $r$.
In particular, we see that $\rs$ satisfying \eqref{Ric-eq} exists locally.

We shall show that this construction of $\rs$ is independent of the choice of coordinates. 
Let $(w^0,w)$ be another coordinates of $\calL_X^*$  such that $w=\Phi(z)$ and $w^0=z^0\varphi(z)$, where $\Phi$ is biholomorphic and $\varphi$ is a nonvanishing holomorphic function of $z$.
We write 
$$
(w^0,w)=\Phi_\sharp(z^0,z)=(z^0\varphi(z),\Phi(z)).
$$
Then the chain rule gives
$$
\left|\det\Phi_\sharp'\right|^2
 \det\left(\frac{ \pa^2\rs}{\pa w^J\pa\conj w^K}\right)
  = \det\left(\frac{ \pa^2\rs}{\pa z^J\pa\conj z^K}\right).
$$
Thus, using $\det\Phi_\sharp'=\varphi\det\Phi'$, we have
\begin{equation}\label{trans-J}
\left|\det\Phi'\right|^2|\varphi|^{2(n+2)}\calJ_w[\wh r\,]
=\calJ_z[r],
\end{equation}
where $\wh r(w)=|\varphi(z)|^{-2}r(z)$.
In particular, setting $\varphi(z)=(\det\Phi')^{-1/(n+2)}$, we have
\begin{equation}
\calJ_w[r]
=\calJ_z\big[\left|\varphi\right|^{2}r\big].
\end{equation}
This shows that $\rs$ defined with respect to $z$ and $w$ agree modulo $O(\rho^{n+3})$.
Therefore, by using a partition of the unity, we can define $\rs\in\wt\calE(1)$ 
globally on $\calL^*_X$ near $\calN$ so that 
\eqref{Ric-eq} holds.

We next prove the uniqueness. Suppose that $\rs$ satisfies \eqref{Ric-eq}.
Then,
in the coordinates $(z^0,z)$, we have $\pa\conj\pa\log \calJ_z[r]=\pa\conj\pa(\phi\rho^{n+2})$, and thus 
$\log \calJ_z[r]-\phi\rho^{n+2}$ is pluriharmonic.
Taking a holomorphic function $f$ such that
$$
\log \calJ_z[r]=2\Re f+O(\rho^{n+2}),
$$
we may write
\begin{equation}\label{efJ}
\calJ_z[r]=|e^f|^2\big(1+O(\rho^{n+2})\big).
\end{equation}
Choosing coordinates $w$ such that $\det(\pa w^j/\pa z^k)=e^{f}$, we see from \eqref{trans-J} with $\varphi=1$ that
$$
\calJ_w[r]=1+O(\rho^{n+2}).
$$
The uniqueness of the solution to this equation implies that of $\rs$.
Finally, the equation \eqref{Ric-obs} is obtained by substituting 
\eqref{JeqC} into \eqref{RicJeq}.
\end{proof}

\begin{rem}\label{canonicalbundlerem}\rm
For a general CR manifold $M\subset X$, the bundle $\calL_X^*$ may exist only locally.
However, we can alternatively define the ambient metric on $\calK_X^*$
so that
$$
\calL_X^*\in\eta\mapsto\eta^{n+2}\in\calK_X^*
$$
is an isometry for each locally defined $(\calL_X^*, \wt g)$;
this is the original definition of the ambient metric in \cite{F1}.
The map corresponds to the quotient $\SU(n+1,1)\to\SU(n+1,1)/\bZ_{n+2}$
and the global existence of $\calL^*_X$ is considered as an analogy of Spin structure in Riemannian geometry. The use of $\calL_X^*$ is essential in arguments using the structure group and \cite{F2} used this formulation; see also \cite{CG} on the relation to the Cartan connection. 
One can easily see if the local invariants constructed via $\calL_X^*$ can be patched up to a global one, by checking if the same method works for $\calK^*_X$.
This is always the case for the arguments in this paper and we can assume the existence of $\calL_X^*$ without losing generality.
\end{rem}

\subsection{Complete Einstein-K\"ahler metrics}
We next consider a K\"ahler metric on the strictly pseudoconvex domain $D$ bounded by $M$.  Since
$\rs\in\wt\calE(1)$ defines a hermitian metric of the line bundle $\calL_D$,  the curvature gives a $(1,1)$-form
$$
g=-i\pa\conj\pa\log \rs,
$$
which is a K\"ahler metric on $D$ near $M$. We can also write $g$ as the curvature of
the canonical bundle $\calK_D$ with the hermitian metric $h=\rs^{n+2}$:
$$
g=\frac{-1}{n+2}i\pa\conj\pa\log h.
$$
In local coordinates, we have
$
\calJ_z[r]
=r^{n+2}\det(g_{j\conj k})
$ 
and thus
$$
\Ric(g)+(n+2)g=-i\pa\conj\pa\log \calJ_z[r].
$$
Therefore $\calJ_z[r]=1+O(\rho^{n+2})$ implies
\begin{equation}\label{appeinstein}
\Ric(g)+(n+2)g= O_+(\rho^{n}).
\end{equation}

\begin{defn} A K\"ahler manifold $(D,g)$ is 
{\em asymptotically-Einstein} if  \eqref{appeinstein} holds and
$
g+i\pa\conj\pa\log\rho
$
is $C^\infty$ up to the boundary for a $C^\infty$ defining function 
$\rho$ of $M$.
\end{defn} 

We can apply the theorem of Cheng-Yau \cite{ChY} and its refined from by van Coevering \cite{vC}
to give examples of asymptotically-Einstein K\"ahler manifolds.

\begin{thm}\label{CYthm}
Let $D$ be a bounded strictly pseudoconvex domain in a K\"ahler manifold $X$.
Then $D$ admits a complete Einstein-K\"ahler metric if and only if the canonical bundle $\calK_D$ is positive. Moreover, such a domain $D$ admits an asymptotically Einstein K\"ahler metric.
\end{thm}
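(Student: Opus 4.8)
The plan is to deduce both directions from the theory of Cheng--Yau \cite{ChY} (refined by van Coevering \cite{vC}) for the complex Monge--Amp\`ere equation, and to use the smooth approximate solution of Proposition \ref{Ricci-prop} for the last assertion; throughout we normalize the Einstein constant to $-(n+2)$ as elsewhere in the paper.

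\emph{Necessity of the positivity of $\calK_D$.} Suppose $g$ is a complete K\"ahler metric on $D$ with $\Ric(g)=-(n+2)g$. Then $g$ induces a Hermitian metric on $\calK_D$ --- locally $|dz^1\wedge\cdots\wedge dz^{n+1}|^2=(\det g_{j\conj k})^{-1}$ --- whose Chern curvature equals $i\pa\conj\pa\log\det(g_{j\conj k})=-\Ric(g)=(n+2)g$, a positive $(1,1)$-form. Hence $\calK_D$ carries a metric of positive curvature, i.e.\ is positive.

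\emph{Existence of a complete Einstein--K\"ahler metric when $\calK_D$ is positive.} I would realize the metric as $g=-\tfrac{1}{n+2}i\pa\conj\pa\log h$ for a Hermitian metric $h$ on $\calK_D$ to be found, and rewrite $\Ric(g)=-(n+2)g$ as a complex Monge--Amp\`ere equation for $h$. Fix a background metric $h_0$ on $\calK_D$ whose curvature $\omega_0$ is K\"ahler on $D$, blend $\omega_0$ near $M$ with $-i\pa\conj\pa\log\rho$, which is K\"ahler and complete near the strictly pseudoconvex boundary, and write $h=h_0e^{-(n+2)\phi}$ so that $g=\omega_0+i\pa\conj\pa\phi$. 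Applying $\pa\conj\pa$ turns the Einstein equation into
\[
(\omega_0+i\pa\conj\pa\phi)^{\,n+1}=e^{(n+2)\phi}\,F\,\omega_0^{\,n+1},
\]
with $F>0$ determined by $h_0$, together with the boundary condition $\phi+\log\rho=O(1)$ with prescribed one-jet, which forces $g$ to be complete with the expected Bergman-type singularity at $M$. This is solved by the continuity method: openness is routine from the linearization, while the crucial ingredient is the chain of a priori estimates ($C^0$, then $C^1$, then $C^2$, then interior and boundary higher-order estimates) for this degenerate complete problem. I would \emph{not} reproduce these estimates: they are exactly what Cheng--Yau establish for $D\subset\bC^{n+1}$ and what van Coevering adapts, essentially verbatim, to a bounded strictly pseudoconvex domain in a K\"ahler manifold once $\calK_D$ is positive. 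This is the \emph{main obstacle} of the theorem and the reason we merely cite \cite{ChY} and \cite{vC}; uniqueness is also part of that theory but is not needed here.

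\emph{Existence of an asymptotically-Einstein metric.} The complete metric just constructed is itself not asymptotically-Einstein for finite $n$: its Cheng--Yau defining function has the logarithmic expansion \eqref{asymp-exp}, so $g+i\pa\conj\pa\log\rho$ does not extend smoothly to $M$. Instead I would build the metric from the smooth approximate solution $\rs=|z^0|^2r$ of Proposition \ref{Ricci-prop}, a Hermitian metric on $\calL_D$ near $M$. The form $-i\pa\conj\pa\log\rs$ is K\"ahler near $M$, satisfies $\Ric(-i\pa\conj\pa\log\rs)+(n+2)(-i\pa\conj\pa\log\rs)=O_+(\rho^n)$ by \eqref{appeinstein}, and $-i\pa\conj\pa\log\rs+i\pa\conj\pa\log\rho=-i\pa\conj\pa\log(r/\rho)$ is $C^\infty$ up to $M$ since $r/\rho$ is smooth and positive there; thus it has all the required properties near $M$. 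Since positivity of $\calK_D$ makes $\calL_D=\calK_D^{1/(n+2)}$ positive, one extends $\rs$ to a Hermitian metric $\hat h$ on $\calL_D$ over all of $D$ with $-i\pa\conj\pa\log\hat h>0$ everywhere: take $\hat h=\rs$ near $M$, use the metric of positive curvature furnished by the positivity of $\calL_D$ away from $M$, and interpolate by a regularized maximum of the corresponding weights, which keeps the curvature positive. Then $g:=-i\pa\conj\pa\log\hat h$ is a global K\"ahler metric on $D$ coinciding with $-i\pa\conj\pa\log\rs$ near $M$, hence asymptotically-Einstein. The only delicate point here is maintaining positivity through the interpolation, which is precisely what the positivity hypothesis supplies; the asymptotically-Einstein conditions are conditions on the germ of $g$ at $M$ and are untouched by the interior modification.
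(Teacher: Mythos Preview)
Your treatment of the first statement is essentially the same as the paper's: both simply cite \cite{ChY} and \cite{vC} for the existence of the complete Einstein--K\"ahler metric under the hypothesis $\calK_D>0$, and the necessity direction you spell out is the easy curvature computation.

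For the second statement your route is correct but genuinely different from the paper's. The paper starts from the complete Einstein--K\"ahler metric $g$ itself, invokes the Lee--Melrose polyhomogeneous expansion \cite{LM} to write $g=-i\pa\conj\pa\log\rs+i\pa\conj\pa F$ with $F=\rho^{n+2}(\eta_0+\eta_1\log\rho)$, and then cuts off $F$ near the boundary with a smooth bump $\psi_\epsilon$; the estimate $|\pa\conj\pa(\psi_\epsilon F)|\le C\rho$ guarantees the perturbed form stays K\"ahler and equals $-i\pa\conj\pa\log\rs$ in a collar of $M$. You instead bypass the Einstein metric and Lee--Melrose entirely: you take Fefferman's smooth approximate solution $\rs$ near $M$, which already satisfies the asymptotically-Einstein conditions, and extend it inward by gluing with a positively curved metric on $\calL_D$ (supplied by $\calK_D>0$) via a regularized maximum. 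Your argument is more elementary in that it avoids the deep boundary regularity theory of \cite{LM}; the paper's argument, on the other hand, produces a metric that agrees with the genuine Einstein metric outside an arbitrarily thin collar, which is closer in spirit to how the asymptotically-Einstein metric is used later. One small caveat: $\calL_D=\calK_D^{1/(n+2)}$ may exist only locally, so your gluing should really be phrased on $\calK_D$ (or via the metric $h=\rs^{n+2}$), as in Remark~\ref{canonicalbundlerem}; this is cosmetic and does not affect the argument.
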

\begin{proof}
The first statement is mostly due to \cite{ChY} and this sharp statement is proved in  \cite[Th.\ 3.1]{vC}.
The second statement follows from the asymptotic analysis of $g$ in \cite{LM}.
We recall it for the reader's convenience. We take $\rs$ as in Proposition \ref{Ricci-prop}; then $g$ can be written in the form
$$
g=-i\pa\conj\pa\log\rs+i\pa\conj\pa F,
$$
where $F\in C^\infty(D)$ admits expansion
$F=\rho^{n+2}(\eta_0+\eta_1\log\rho)$ with $\eta_0\in C^\infty(\conj D)$ 
and $\eta_1\in C^{n+1}(\conj D)$. 
Let $\psi(t)$ be a $C^\infty$ function such that $\psi(t)=0$ if $t<1/2$ and $\psi(t)=1$ if $t\ge1$. Then $\psi_\epsilon(z)=\psi(\rho(z)/\epsilon)$ satisfies $|\pa\psi_\epsilon|\le C \rho^{-1}$ and $|\pa\conj\pa\psi_\epsilon|\le C\rho^{-2}$ on $D$ with respect to a metric on $X$. It follows that
$|\pa\conj\pa(\psi_\epsilon F)|\le C\rho$. Taking $\epsilon>0$ small, we may cut off $F$ near the boundary and make the desired K\"ahler metric.
\end{proof}

This theorem, in particular, shows that a strictly pseudoconvex domain in a Stein manifold admits an asymptotically Einstein metric.  Further examples, which are not Stein, include the unit disk bundle in a negative line bundle over a compact K\"ahler manifold. See \cite{vC} for more examples and detailed discussions.

\subsection{Pseudo-hermitian geometry}
Let $\th$ be a contact form on a strictly pseudoconvex CR manifold
$M$. Then there is a uniquely determined real vector field $T$, called the {\em Reeb vector field}, that satisfies
$$
T\CR d\th=0,\quad \th(T)=1.
$$
With this choice, we have a decomposition $\bC TM=\bC T\oplus T^{1,0}\oplus T^{0,1}$.
Let us take a frame $Z_\alpha$ of $T^{1,0}$ and set  $Z_{\conj\beta}=\conj{Z_\beta}$.  Then the set
$$
T,\  Z_\alpha, \ Z_{\conj\beta}
$$
forms a frame of $\bC TM$.  
The dual frame $\th,\th^\alpha,\th^{\conj\beta}$ is said to be an {\em admissible coframe} and satisfies
$$
 d\th=i h_\ab\th^\alpha\wedge\th^{\conj\beta}
$$
for a positive hermitian matrix $h_{\alpha\conj\beta}$.
We will use abstract index notation
and denote $T^{1,0}$ by $\calE^\alpha$ and its dual by $\calE_\alpha$.
Tensor bundles are defined, e.g., by $\calE_{\alpha\conj\beta}:=\calE_\alpha\otimes\calE_{\conj\beta}$, where $\calE_{\conj\beta}$ is the dual of $T^{0,1}$.

The {\em canonical bundle} $\calK_M$ of $M$ is defined by
$\wedge^{n+1} (T^{0,1})^\perp\subset\wedge^{n+1}\bC T^*M$.
If $M$ is embedded in $X$, we can identify $\calK_M$ with the restriction of 
$\calK_X$ over $M$.  
Given a contact form $\th$, we can find a local section $\zeta$ of the canonical bundle $\calK_M$ such that
\begin{equation}\label{th-normal}
\th\wedge d\th^n=i^{n^2}n!\,\th\wedge(T\CR\zeta)\wedge(T\CR\conj\zeta).
\end{equation}
We then say that $\th$ {\em is volume-normalized with respect to} $\zeta$. 
We take a (locally defined) bundle $\calL_M=\calK_M^{1/(n+2)}$.
A {\em CR density of weight $w\in\bR$} (or $(w,w)$ in the formulation of \cite{GG}) is a $C^\infty$ section of the bundle
$$
\calE(w)=\calL_M^{-w}\otimes\conj\calL_M^{\,\,-w},
$$ 
which is defined globally even if $\calL_M$ is not.  We abuse the notation and also denote the space of the sections of the bundle by
$\calE(w)$. We can also identify a section $\varphi$ of $\calE(w)$ with a function $f$ on 
$\calN=\calL_M\setminus 0$ which is homogeneous of degree $(w,w)$
by the correspondence $\varphi(x)=f(\xi)|\xi|^{-2w}$. Here $|\xi|^2=\xi\otimes\conj\xi\in\calE(-1)$ for $\xi\in\calL_M$ and $\xi$ projects to $x\in M$.
With a fixed choice of $\th$, we can also identify a CR density $f$ with a function on $M$.
Choose a section $\zeta$ that volume-normalize $\th$ and set $\eta=\zeta^{1/(n+2)}$,
which is a local section of $\calN$.  Then the function
$f_\th=f\circ\eta$
is globally defined on $M$ and satisfies the transformation law
under the scaling $\wh\th=e^\up\th$:
$$
f_{\wh\th}=e^{w\up}f_\th.
$$
In particular, $\calE:=\calE(0)$ is the space of functions on $M$ and
$\calE(-n-1)$ can be identified with the space of volume forms by the correspondence
$$
\calE(-n-1)\ni f\ \longleftrightarrow \ f_\th \th\wedge d\th^n\in \wedge^{2n+1}T^*M.
$$
Note also that $\eta$ as above defines a density  $|\eta|^{-2}\in\calE(1)$
and $\bth=\th\otimes |\eta|^{-2}$ gives a canonical section of $T^*M\otimes\calE(1)$,
i.e., $\bth$ is independent of $\th$.  
Alternatively, we can define $\bth$ as a 1-form on $\calN$:
$$
\bth=\frac{i}{2}(\pa-\conj\pa)\rs|_{T\calN}.
$$
The Levi form of $\th$ scales the same way and defines
a canonical section $\bh_{\alpha\conj\beta}\in\calE_{\alpha\conj\beta}(1)$;
here we simplify the notation by setting
$\calE_{\alpha\conj\beta}(1):=\calE_{\alpha\conj\beta}\otimes\calE(1)$.
We will use $\bh_{\alpha\conj\beta}$  and its inverse $\bh^{\alpha\conj\beta}\in
\calE^{\alpha\conj\beta}(-1)$ 
to raise and lower the indices. These operations are independent of $\th$.

For a homogeneous function
$\wf\in\wt\calE(w)$ on the ambient space, its restriction to $\calN$ defines a CR density
 $f=\wf|_\calN\in\calE(w)$.
We call $\wf$ an {\em ambient extension of} $f$, which has ambiguity of adding
$\varphi\,\rs $ with $\varphi\in\wt\calE(w-1)$.
The following lemma will help to see the correspondence between CR densities
and their extensions.

\begin{lem}\label{divlem}
{\rm (i)} A defining function $\rho$ satisfies $\calJ_z[\rho]=1+O(\rho)$ if and only if
 $\rho$ is normalized by $\th$ that is volume-normalized with respect to 
$dz=dz^1\wedge\cdots\wedge dz^{n+1}$.

\medskip
\noindent
{\rm (ii)}
If $\rho$ is normalized by $\th$,
then, for each 
 $F\in\wt\calE(-m)$, there exists an  $F'\in\wt\calE(0)$  such that
$$
F\rs^m=F' \rho^m
\quad\text{and}\quad 
F'|_M=(F|_\calN)_\th.
$$
\end{lem}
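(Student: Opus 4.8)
The plan is to prove (i) by identifying, pointwise on $M$, the number $\calJ_z[\rho]|_M$ with the factor that converts the contact volume form of $\th$ into the form built from the holomorphic volume $dz=dz^1\wedge\cdots\wedge dz^{n+1}$, and then to deduce (ii) from (i) together with the scaling laws of \S2.3.

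For (i) I would first record a rescaling law: $\calJ_z[e^\phi\rho]|_M=e^{(n+2)\phi}\,\calJ_z[\rho]|_M$ for $\phi\in C^\infty(X)$. This follows by row and column reduction, since the extra terms produced in the Hessian block by the factor $e^\phi$ are, on $M$, of the form $(\pa_j\phi)(\pa_{\conj k}\rho)+(\pa_j\rho)(\pa_{\conj k}\phi)$ and are eliminated against the bordering vectors $(\pa_j\rho)$, $(\pa_{\conj k}\rho)$. In particular $\calJ_z[\rho]|_M$ depends on $\rho$ only through its $1$-jet along $M$, i.e.\ only through $\th=\frac{i}{2}(\pa-\conj\pa)\rho|_{TM}$ and the coordinates $z$. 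Now fix $p\in M$ and normalize: a translation and a unitary change of $(z^1,\dots,z^{n+1})$ leave $\calJ_z[\rho]$ unchanged and multiply $dz$ by a unimodular constant, so I may assume $\pa\rho(p)$ is a multiple of $dz^{n+1}$; a further shear $z^\alpha\mapsto z^\alpha+t_\alpha z^{n+1}$ has unit Jacobian determinant (hence changes neither $\calJ_z[\rho]$ nor $dz$) and, using that $(\pa_{\alpha\conj\beta}\rho(p))$ is invertible by strict pseudoconvexity, can be chosen so that $\pa_{\alpha\conj{n+1}}\rho(p)=0$. In these coordinates the restrictions $dz^1|_{T_pM},\dots,dz^n|_{T_pM}$ complete $\th$ to an admissible coframe, the Levi matrix equals $-\pa_{\alpha\conj\beta}\rho(p)$, and a short computation shows that $\th\wedge d\th^n$ and $\th\wedge(T\CR dz)\wedge(T\CR\conj{dz})$ are multiples of $\th\wedge\th^1\wedge\cdots\wedge\th^n\wedge\th^{\conj1}\wedge\cdots\wedge\th^{\conj n}$ — the first involving the Levi determinant, the second the factor $|\pa_{n+1}\rho(p)|^{-2}$ — whose ratio is, by the standard bordered-determinant identity, a constant depending only on $n$ times $\calJ_z[\rho](p)$. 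Checking that this constant is $i^{n^2}n!$ yields the pointwise identity $\th\wedge d\th^n=\calJ_z[\rho]|_M\cdot i^{n^2}n!\,\th\wedge(T\CR dz)\wedge(T\CR\conj{dz})$ on $M$; by \eqref{th-normal} this says precisely that $\th$ is volume-normalized with respect to $dz$ if and only if $\calJ_z[\rho]|_M\equiv 1$, which is (i).

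For (ii), observe that $\rs$ and $\rho$ are both defining functions of $\calN$ (the latter pulled back from $X$) and that $\rs/\rho$ extends to a positive smooth function up to $\calN$; hence $F':=F\,(\rs/\rho)^m$ lies in $\wt\calE(0)$, is smooth up to $\calN$, and satisfies $F\rs^m=F'\rho^m$ by construction. To identify $F'|_M$, work in a coordinate chart and take $\rs=|z^0|^2r(z)$ with $r$ as in \eqref{JeqC}, so $\calJ_z[r]|_M=1$. Writing $\rho=\lambda\,r(z)$ with $\lambda>0$ smooth near $M$, the contact form normalizing $\rho$ is $\th=(\lambda|_M)\,\th_0$, where $\th_0=\frac{i}{2}(\pa-\conj\pa)r|_{TM}$ normalizes $r$. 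By (i), $\calJ_z[r]|_M=1$ means $\th_0$ is volume-normalized with respect to $dz$, so the section of $\calN$ adapted to $\th_0$ is $(dz)^{1/(n+2)}$, along which $z^0\equiv1$; since $F'=F\,|z^0|^{2m}(r/\rho)^m$ is homogeneous of degree $0$, evaluating it on $M$ along that section gives $F'|_M=(F|_\calN)_{\th_0}\cdot((r/\rho)|_M)^m$. Finally $(r/\rho)|_M=(\lambda|_M)^{-1}$, and the transformation law $f_{\wh\th}=e^{w\up}f_\th$ for $f\in\calE(w)$, applied with $w=-m$ and $e^\up=\lambda|_M$, gives $(F|_\calN)_\th=(\lambda|_M)^{-m}(F|_\calN)_{\th_0}=F'|_M$, as required.

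The main obstacle is the bookkeeping in (i): establishing the bordered-determinant identity at $p$ and, in particular, pinning down its proportionality constant so that it matches the $i^{n^2}n!$ fixed by the convention \eqref{th-normal}. Once (i) is available, (ii) is a formal consequence of it and the scaling laws of \S2.3.
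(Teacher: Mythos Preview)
Your proposal is correct and, in substance, follows the same route as the paper. For part~(i) the paper simply invokes Farris's identity
\[
\th\wedge d\th^{\,n}=i^{n^2}n!\,\calJ_z[\rho]\,\th\wedge(T\CR dz)\wedge(T\CR d\conj z)
\]
and reads off the equivalence; what you outline is precisely a direct proof of this identity by normalizing coordinates at a point and reducing $\calJ_z[\rho](p)$ to a block (bordered) determinant. So the difference is only that the paper cites the formula while you rederive it; your derivation buys self-containedness at the cost of the constant bookkeeping you already flag as the main chore. For part~(ii) your argument is essentially the paper's: define $F'=F(\rs/\rho)^m\in\wt\calE(0)$, verify $F'|_M=(F|_\calN)_{\th_0}$ in the special case $\rho=r$ (where $\rs=|z^0|^2\rho$ and $\th_0$ is volume-normalized by $dz$, using~(i)), and then pass to an arbitrary normalized $\rho=\lambda r$ via the density transformation law $f_{\wh\th}=e^{w\up}f_\th$. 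The paper organizes the two steps in the same order (special $\rho$ first, then rescale).
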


\begin{proof}
(i) It is clear from the equation by \cite{Fa}: 
\begin{equation*}\label{FarrisJ}
\th\wedge d\th^n=i^{n^2}n!\,\calJ_z[\rho]\th\wedge(T\CR dz)\wedge(T\CR d\conj z).
\end{equation*}
(ii)
We first take $\rho$ as in (i) and fix the fiber coordinate $z^0$ by the section
$(dz)^{1/(n+2)}$.
Then we may write $F=|z^0|^{-2m}F'(z)$ and $\rs=|z^0|^2\rho(z)$; thus 
$
(F|_\calN)_\th=F'|_M
$.
For general $\wh\th=e^\up \th$, extending $\up$ to
$\wt\calE(0)$, we obtain $\wh\rho=e^\up\rho$ which is normalized by $\wh\th$.
So $F'\rho^m=\wh F'\,\wh\rho\,{}^m$ gives $\wh F'=e^{-m\up}F'$ and
hence $\wh F'|_M=(F|_\calN)_{\wh\th}$.
\end{proof}

We next recall the canonical connection for the pseudo-hermitian geometry.
A choice of $\th$ determines 
the {\em Tanaka-Webster connection} $\nabla$ on $T^{1,0}$:
the connection form $\omega_\alpha{}^\beta$, $\nabla Z_\alpha=\omega_\alpha{}^\beta Z_\beta$, is uniquely determined by the structure equations

$$
d\th^\beta=\th^\alpha\wedge\omega_\alpha{}^\beta+A^\alpha{}_{\conj\beta}\,\bth\wedge\th^{\conj\beta},\quad
\omega_{\alpha\conj\beta}+\conj{\omega_{\beta\conj\alpha}}=d\bh_{\alpha\conj\beta},
\quad A_{[\alpha\beta]}=0.
$$
Here $[\cdots]$ indicates antisymmetrisation over the enclosed indices:
$$A_{[\alpha\beta]}=\frac{1}{2}(A_{\alpha\beta}-A_{\beta\alpha}).
$$
Thus $A_{\alpha\beta}\in\calE_{\alpha\beta}$ is symmetric and is called the {\em Tanka-Webster torsion}.
We extend the connection to the one on $\bC TM$ by imposing $\nabla T=0$
and extending to $T^{0,1}$ by conjugation.  There is an induced connection on the canonical bundle $\calK_M$ and also on the density bundles $\calE(w)$.

The curvature of the connection is defined by
$$
d\omega_\alpha{}^\beta-\omega_\alpha{}^\gamma\wedge\omega_\gamma{}^\beta=
R_\alpha{}^\beta{}_{\rho\conj\sigma}\th^\rho\wedge\th^{\conj\sigma}\mod
\th,\th^\alpha\wedge\th^\beta,\th^{\conj \alpha}\wedge\th^{\conj\beta}.
$$
We call $R_{\alpha\conj\beta\gamma\conj\sigma}\in\calE_{\alpha\conj\beta\gamma\conj\sigma}(1)$ the {\em Tanaka--Webster curvature}. Other components of
the curvature form can be written in terms of $R_{\alpha\conj\beta\gamma\conj\sigma}$, $A_{\alpha\beta}$, $A_{\conj\alpha\conj\beta}=\conj{A_{\alpha\beta}}$ and their covariant derivatives.  
The Ricci tensor and the scalar curvature are defined respectively by
$$
\Ric_{\alpha\conj\beta}=R_{\gamma}{}^{\gamma}{}_{\alpha\conj\beta}\in
\calE_{\alpha\conj\beta},
\quad
\Scal=\Ric_{\alpha}{}^\alpha\in\calE(-1).
$$
We denote the components of successive covariant derivatives of a tensor by subscripts preceded by a comma, 
as in $A_{\alpha\beta,\gamma\conj\sigma}$.  When the derivatives are applied to a function, we omit the comma.  With these notations, we set
$$
\pa_bf=f_\alpha \th^\alpha,\quad\conj\pa_bf=f_{\conj\alpha} \th^{\conj\alpha},\quad 
\Delta_bf=-f_\alpha{}^\alpha-f^\alpha{}_\alpha.
$$
Note that the sub-Laplacian changes the weight 
$
\Delta_b\colon\calE(w)\to\calE(w-1).
$
We will use the index $0$ to denote the $\th$ component, so that $f_0=Tf$ for a density, where $T$ is regarded as an operator $T\colon\calE(w)\to\calE(w-1)$. Then the commutator of the derivatives on $f\in\calE(w)$ 
are given by
\begin{equation}\label{commf}
2f_{[\alpha\beta]}=0,\qquad
2f_{[\alpha\conj\beta]}=i\bh_{\alpha\conj\beta}f_0,\qquad
2f_{[0\alpha]}=A_{\alpha\beta} f^\beta
\end{equation}
and the Bianchi identities give
\begin{equation}\label{BianchiA}
A_{\alpha[\beta,\gamma]}=0,
\quad
A_{\alpha\beta},{}^{\alpha\beta}+A_{\conj\alpha\conj\beta},{}^{\conj\alpha\conj\beta}=\Scal_{0}.
\end{equation}
See \cite{L1,L2} for a complete list of such formulas and the proof.

We will use the following transformation rules of connection under the scaling of contact form. Let $\wh\th=e^\up\th$ and take $\wh\th^\alpha=\th^\alpha+\up^\alpha\th$ as an admissible coframe for $\wh\th$,
 where $\up_\alpha=\nabla_\alpha\up$.
We denote the quantities defined with respect to $\wh\th$, and the components in the coframe $\wh\th^\alpha$, by $\wh\nabla_\alpha$,  $\wh A_{\alpha\beta}$, etc.

\begin{prop}\label{transf-prop}
{\rm (i)}
The torsion $A_{\alpha\beta}\in\calE_{\alpha\beta}$ and
the scalar curvature $\Scal\in\calE(-1)$ satisfy
$$
\begin{aligned}
\wh A_{\alpha\beta}&=A_{\alpha\beta}+i \up_{\alpha\beta}-i\up_\alpha \up_\beta,
\\
\wh \Scal&=\Scal+(n+1)\Delta_b \up-n(n+1) \up_\alpha \up^\alpha.
\end{aligned}
$$
{\rm (ii)}
If $f\in\calE(w)$, then
$$
\begin{aligned}
\wh\nabla_\alpha f&=f_\alpha +w\up_\alpha f,\\
\wh\nabla_{0}f&=f_{0}+i\up_{\alpha}f^\alpha
-i\up^{\alpha}f_\alpha
+\frac{2w}{n+2}(\up_0+\Im\up_\alpha{}^\alpha)f.
\end{aligned}
$$
{\rm (iii)}
If $\tau_\alpha\in\calE_\alpha(w)$, then
$$
\wh\nabla_{\conj\beta}\tau_\alpha =
\nabla_{\conj\beta}\tau_\alpha+\bh_{\alpha\conj\beta}\up^\gamma\tau_\gamma+
w\up_{\conj\beta}\,\tau_\alpha.
$$
{\rm (iv)} If $f\in\calE(w)$, then
\begin{equation*}
\begin{aligned}
\wh\Delta_b f=\Delta_b f+& w(\Delta_b\up) f-(n+2w)(\up^\alpha f_\alpha+\up_\alpha f^\alpha)\\
&-2w(n+w)\up^\alpha\up_\alpha f.
\end{aligned}
\end{equation*}
\end{prop}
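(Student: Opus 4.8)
These are the standard transformation laws of pseudo-hermitian geometry; up to the systematic bookkeeping of the density weight $w$ they go back to Jerison--Lee and Lee, and the plan is to follow the computation in \cite{L1,L2}, inserting weights where needed. First I would fix $\th$ and an admissible coframe $\th,\th^\alpha,\th^{\conj\beta}$, and verify that $\wh\th=e^\up\th$ together with $\wh\th^\alpha=\th^\alpha+\up^\alpha\th$, $\wh\th^{\conj\beta}=\conj{\wh\th^\alpha}$, is again an admissible coframe with the same canonical Levi form $\bh_{\alpha\conj\beta}\in\calE_{\alpha\conj\beta}(1)$: writing $d\wh\th=e^\up(d\up\wedge\th+d\th)$, decomposing $d\up=\up_\alpha\th^\alpha+\up_{\conj\beta}\th^{\conj\beta}+\up_0\th$, substituting $d\th=i\bh_{\alpha\conj\beta}\th^\alpha\wedge\th^{\conj\beta}$ and collecting terms identifies $d\wh\th$ with $i\bh_{\alpha\conj\beta}\wh\th^\alpha\wedge\wh\th^{\conj\beta}$ in the density normalization. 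Dualizing shows that the Reeb field of $\wh\th$ differs from $T$ by an explicit horizontal vector linear in $\pa_b\up$ and $\conj\pa_b\up$; this correction, together with the rescaling of the volume-normalized canonical section of $\calK_M$ by a power of $e^\up$, will be the source of the $0$-direction corrections.

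For (i), I would feed $\wh\th,\wh\th^\alpha$ into the structure equations $d\wh\th^\beta=\wh\th^\alpha\wedge\wh\omega_\alpha{}^\beta+\wh A^\beta{}_{\conj\gamma}\,\wh\bth\wedge\wh\th^{\conj\gamma}$ and $\wh\omega_{\alpha\conj\beta}+\conj{\wh\omega_{\beta\conj\alpha}}=d\bh_{\alpha\conj\beta}$: expanding $d\wh\th^\beta=d\th^\beta+d(\up^\beta\th)$, inserting the old structure equations and using the commutation identities \eqref{commf} to put the result in standard form, one solves uniquely for $\wh\omega_\alpha{}^\beta-\omega_\alpha{}^\beta$ (the CR analogue of the conformal change of the Levi-Civita connection) and reads $\wh A_{\alpha\beta}$ off the $\wh\bth\wedge\wh\th^{\conj\gamma}$-component. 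Differentiating $\wh\omega_\alpha{}^\beta$, comparing with the curvature structure equation, then contracting twice and applying the Bianchi identities \eqref{BianchiA}, yields $\wh\Scal$.

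For (ii) and (iii) the key point is that the Tanaka--Webster connection on $\calE(w)=\calL_M^{-w}\otimes\conj\calL_M^{\,-w}$ is induced, through $\calL_M^{n+2}=\calK_M$, from the connection on $\calK_M$, whose connection form is built from $\omega_\alpha{}^\alpha$ and a $\th$-term fixed by $\nabla T=0$ and the volume normalization \eqref{th-normal}. Thus $\wh\nabla-\nabla$ on $\calE(w)$ is $-\frac{w}{n+2}$ times the difference of these forms for $\wh\th$ and $\th$, evaluated in the shifted coframe. Its $\wh\th^\alpha$-component gives $\wh\nabla_\alpha f=f_\alpha+w\up_\alpha f$; its $\wh\th$-component, combined with the change of Reeb field (which contributes the $w$-independent terms $i\up_\alpha f^\alpha-i\up^\alpha f_\alpha$) and the $\th$-part of the $\calK_M$-connection (contributing $\frac{2w}{n+2}(\up_0+\Im\up_\alpha{}^\alpha)f$), gives $\wh\nabla_0 f$. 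For $\tau_\alpha\in\calE_\alpha(w)$ one adds, to the term $\bh_{\alpha\conj\beta}\up^\gamma\tau_\gamma$ coming from the $\th^{\conj\beta}$-component of $\wh\omega_\alpha{}^\gamma-\omega_\alpha{}^\gamma$, the density contribution $w\up_{\conj\beta}\tau_\alpha$ of the $\calE(w)$-factor.

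Finally (iv) will follow by composition: since the raising of indices by $\bh^{\alpha\conj\beta}$ is independent of $\th$, $\wh\Delta_b f=-\bh^{\alpha\conj\beta}(\wh\nabla_\alpha\wh\nabla_{\conj\beta}+\wh\nabla_{\conj\beta}\wh\nabla_\alpha)f$; applying the first formula of (ii) and its barred analogue to $\wh\nabla_{\conj\beta}f=f_{\conj\beta}+w\up_{\conj\beta}f$, then (iii) and its conjugate to differentiate once more, substituting back, and reducing with $\up_\alpha{}^\alpha+\up^\alpha{}_\alpha=-\Delta_b\up$ and the commutation identities \eqref{commf}, one collects the stated formula. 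The only real difficulty anywhere is the bookkeeping --- the weight-dependent coefficients, and the many lower-order terms produced when derivatives are commuted past one another and past the $\up^\alpha\th$ in the new coframe; in particular the $\th$-component of the induced connection on $\calK_M$, which is the origin of the $\Im\up_\alpha{}^\alpha$ term, must be handled with care, since that is where such computations most easily go astray.
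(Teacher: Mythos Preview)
Your approach is correct and matches the paper's: the paper does not prove (i)--(iii) but simply cites \cite{L1,L2} and \cite{GG}, and states that (iv) is an easy consequence of (ii) and (iii), which is exactly the route you outline. Your more detailed sketch of the Lee computation with density-weight bookkeeping is a faithful expansion of what those references contain.
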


The proofs of (i), (ii) and (iii) can be found in \cite{L1,L2} and \cite{GG};
(iv) is an easy consequence of (ii) and (iii).

\section{CR invariant differential operators and \\ CR pluriharmonic functions}
\subsection{GJMS operators}
We shall apply the ambient metric to construct CR invariant differential operators
by following \cite{GJMS} and \cite{GG}.
We here use abstract index notation and denote the $(1,0)$-form $-\pa\rs$ by $Z_I$ and its conjugate by $Z_{\conj I}$. We
use $\wt g^{I\conj J}$ to raise the index, e.g., $Z^I=\wt g^{I\conj J}Z_{\conj J}$.
The covariant derivative of type $(1,0)$ is denoted by $\wt\nabla_I$ and that of type $(0,1)$ by $\wt\nabla_{\conj I}$.
Then we have 
\begin{equation}\label{DZeq}
\wt\nabla_I\rs=-Z_I,
\quad \wt\nabla_I Z_{\conj J}=\wt g_{I\conj J},\quad Z^I Z_I=-\rs
\end{equation}
and these relations can be used to compute the commutators of $\rs$
and the Laplacian 
$\wt\Delta=-\wt\nabla_I\wt\nabla^I$ acting on functions on $\calL_X^*$:
\begin{equation}\label{commutator}
\begin{aligned}[]
[\wt\Delta,\rs^\ell]&=\ell\,\rs^{\ell-1}(Z+\conj Z+n+\ell+1),\\ 
[\wt\Delta^\ell,\rs]&=\ell\,(Z+\conj Z+n+\ell+1)\wt\Delta^{\ell-1},
\end{aligned}
\end{equation}
where  $Z=Z^I\wt\nabla_I$, $\conj Z=Z^{\conj I}\wt\nabla_{\conj I}$.  Note that $Z$ and $\conj Z$ act as scalar multiplications on $f\in\wt\calE(w)$:
$$
Zf=\conj Z f=wf.
$$

We next recall the relation between $\wt\Delta$ and the Laplacian of $g$ on the domain $D$.
 We identify $u\in C^\infty(D)$
and its lift in $\wt\calE(0)$ over $D$. 
Then, for $w\in\bR$, we have
\begin{equation}
\rs^{1-w}\wt\Delta(\rs^w u)=(\Delta+w(n+1+w)) u,
\end{equation} 
where $\Delta=-\nabla_j\nabla^j$ is the K\"ahler Laplacian of $g$; see \cite[Prop.\ 5.4]{GG}.
In particular, if $w=0$, then $\rs \wt\Delta u=\Delta u$.

For $f\in\calE(m)$, take its ambient extension $\wf\in\wt\calE(m)$.
Then, for $2m\in [-n,0]\cap\bZ$,
$$
\wt\Delta^{n+2m+1}\wf|_\calN\in\calE(-n-m-1)
$$
depends only on $f$.  In fact, if $\wf=\rs\varphi$
for $\varphi\in\wt\calE(m-1)$,
\begin{equation}\label{{GJMS-comp}}
\begin{aligned}
\wt\Delta^{n+2m+1}\rs\varphi&=[\wt\Delta^{n+2m+1},\rs]\varphi+O(\rho)\\
&=(n+2m+1)(Z+\conj Z+2n+2m+2)\wt\Delta^{n+2m}\varphi+O(\rho)\\
&=O(\rho).
\end{aligned}
\end{equation}
Therefore $P_{n+2m+1}f=\wt\Delta^{n+2m+1}\wf|_\calN$ gives a differential operator
$$
P_{n+2m+1}\colon\calE(m)\to\calE(-n-m-1),
$$
which is called a {\em GJMS operator} \cite{GJMS}; this construction works for more general class of CR densities $\calE(w,w')$, see \cite{GG}.
The upper bound on $m$ is imposed to ensure that $P_{n+2m+1}$ is independent of the ambiguity in $\rs$. 

In the following, we only use the case $m=0$ and set $P=P_{n+1}$, which acts on $\calE$. As in the conformal case,
we can also characterize $P$ as the compatibility operator for harmonic extension.

\begin{lem}\label{CRGJMSlem}
For each $f\in  \calE$, there exist functions $A\in\wt\calE(0)$ and $B\in\wt\calE(-n-1)$ 
such that
\begin{equation}\label{wtdeltaAB}
\wt\Delta(A+B\rs^{n+1}\log \rho)=O(\rho^\infty)
\quad\text{and}
\quad A|_\calN=f.
\end{equation}
Moreover, $B|_\calN=\frac{(-1)^{n+1}}{(n+1)!n!}Pf$ holds.
\end{lem}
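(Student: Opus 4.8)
This is the CR analogue of the description of the critical GJMS operator as the obstruction to a harmonic extension, so the plan is to run the usual GJMS-type construction in the ambient space $\calL_X^*$. Fix an ambient extension $\wt f\in\wt\calE(0)$ of $f$ and look for a formal solution of $\wt\Delta u=O(\rho^\infty)$ of the form $u=A+B\,\rs^{n+1}\log\rho$ with $A|_\calN=f$, where $A\sim\sum_{j\ge 0}\rs^{j}A_{j}$, $A_{j}\in\wt\calE(-j)$, $A_{0}=\wt f$, and $B\in\wt\calE(-n-1)$. From \eqref{DZeq} and the commutators \eqref{commutator} one gets, for $A_{j}\in\wt\calE(-j)$,
\begin{equation*}
\wt\Delta(\rs^{j}A_{j})=j(n+1-j)\,\rs^{\,j-1}A_{j}+\rs^{j}\,\wt\Delta A_{j},
\end{equation*}
so that in the expansion of $\wt\Delta A$ the term $A_{j}$ enters at order $\rs^{\,j-1}$ only through multiplication by the indicial factor $j(n+1-j)$. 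This is nonzero for $1\le j\le n$, which determines $A_{1},\dots,A_{n}$ modulo $\rs$ --- in particular their restrictions to $\calN$ --- in terms of $\wt f$; at $j=n+1$ the factor vanishes and the only quantity not yet pinned down is the obstruction $\wt\Delta A_{n}|_\calN$.

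The log term is there to cancel this obstruction. Since $Z+\conj Z$ acts by $2w$ on $\wt\calE(w)$, the operator $Z+\conj Z+2n+2$ annihilates $\wt\calE(-n-1)$, so \eqref{commutator} gives $[\wt\Delta,\rs^{n+1}]B=0$; differentiating $\wt\Delta\rs^{w}=w(n+w+1)\rs^{\,w-1}$ (which follows from \eqref{commutator}) at $w=0$ gives $\wt\Delta\log\rs=(n+1)/\rs$. Combining these --- together with $\log\rho=\log\rs+O(1)$ and Lemma~\ref{divlem} to pass between $\rho$- and $\rs$-powers --- one computes that $\wt\Delta(B\,\rs^{n+1}\log\rho)$ is a nonzero universal constant multiple of $B\,\rs^{n}$, modulo $O(\rho^{n+1}\log\rho)$. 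Matching the order-$\rs^{n}$ terms in $\wt\Delta(A+B\rs^{n+1}\log\rho)=O(\rho^\infty)$ then forces $B|_\calN$ to be a fixed nonzero multiple of $\wt\Delta A_{n}|_\calN$. The indicial factors $j(n+1-j)$ at orders $j\ge n+2$ are again nonzero, so the remaining $A_{j}$ can be solved for without any further log terms (one may also arrange $\wt\Delta B=O(\rho^\infty)$, the indicial factors on $\wt\calE(-n-1)$ being nonzero), and an asymptotic (Borel) summation upgrades the formal series to genuine $A\in\wt\calE(0)$, $B\in\wt\calE(-n-1)$ with $A|_\calN=f$ and $\wt\Delta u=O(\rho^\infty)$.

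To obtain the formula for $B|_\calN$: because $A$ is an ambient extension of $f$, we have $\wt\Delta^{n+1}A|_\calN=Pf$ by the very construction of $P=P_{n+1}$, since the difference of two ambient extensions of $f$ is $\rs\,\psi$ with $\psi\in\wt\calE(-1)$, and $\wt\Delta^{n+1}(\rs\psi)=[\wt\Delta^{n+1},\rs]\psi+\rs\,\wt\Delta^{n+1}\psi=O(\rho)$ as $[\wt\Delta^{n+1},\rs]\psi=(n+1)(Z+\conj Z+2n+2)\wt\Delta^{n}\psi=0$, $\wt\Delta^{n}\psi$ lying in $\wt\calE(-n-1)$. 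On the other hand $\wt\Delta u=O(\rho^\infty)$ gives $\wt\Delta^{n+1}A|_\calN=-\wt\Delta^{n+1}(B\rs^{n+1}\log\rho)|_\calN$. Iterating $\wt\Delta(\rs^{m}B)=[\wt\Delta,\rs^{m}]B+O(\rho^\infty)$ for $B\in\wt\calE(-n-1)$ yields
\begin{equation*}
\wt\Delta^{n}(\rs^{n}B)=(-1)^{n}(n!)^{2}B+O(\rho^\infty),
\end{equation*}
while the $O(\rho^{n+1}\log\rho)$ remainder from the second paragraph dies under $n$ further Laplacians and restriction to $\calN$; combining these with the leading coefficient of $\wt\Delta(B\rs^{n+1}\log\rho)$ expresses $\wt\Delta^{n+1}(B\rs^{n+1}\log\rho)|_\calN$ as an explicit nonzero multiple of $B|_\calN$, and collecting the universal constants gives $B|_\calN=\tfrac{(-1)^{n+1}}{(n+1)!\,n!}\,Pf$.

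The step I expect to be the real work is pinning down the constant, i.e.\ controlling all sub-leading contributions: one must check that the $O(\rho^{n+1}\log\rho)$ tail of $\wt\Delta(B\rs^{n+1}\log\rho)$ --- which involves first derivatives of $B$ and the behaviour of $\wt g$ along the hypersurface $\calN=\{\rs=0\}$ (on which $d\rs$ is null) --- really restricts to zero after $n+1$ applications of $\wt\Delta$, and that the leading constant multiplying $B\rs^{n}$ is computed correctly; this is where $\wt\Delta\log\rs=(n+1)/\rs$ and the normalization of $\rs$ in Proposition~\ref{Ricci-prop} enter. The other non-formal ingredient is the Borel summation producing an honest smooth solution modulo $O(\rho^\infty)$.
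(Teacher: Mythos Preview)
Your approach is correct and is essentially the same inductive construction as the paper's: solve for the Taylor coefficients of $A$ using the indicial factor $j(n+1-j)$, introduce the log term at $j=n+1$ to kill the obstruction, and continue. The only real difference is how you extract the constant in $B|_\calN$, and here the paper is more economical than your plan.

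You propose to first build $A,B$ and then identify the constant \emph{a posteriori} by applying $\wt\Delta^{n+1}$ to the equation, computing $\wt\Delta^{n+1}(B\rs^{n+1}\log\rho)|_\calN$ via $\wt\Delta^{n}(\rs^{n}B)=(-1)^{n}(n!)^{2}B+O(\rho^\infty)$ and arguing that the $O(\rho^{n+1}\log\rho)$ tail dies on $\calN$ after further Laplacians. This works, but it is exactly the step you yourself flag as ``the real work'', and it requires some care with the log terms. The paper bypasses all of this by tracking the obstruction \emph{during} the recursion: writing $\wt\Delta A_{k}=\rs^{k}\phi_{k}$ and $A_{k+1}=A_{k}+\rs^{k+1}\psi_{k+1}$, the commutator gives $\psi_{k+1}=\frac{-1}{(k+1)(n-k)}\phi_{k}$ and $\phi_{k+1}=\wt\Delta\psi_{k+1}$, so immediately
\[
\phi_{n}=\frac{(-1)^{n}}{(n!)^{2}}\,\wt\Delta^{n+1}\wt f.
\]
Then a single computation $\wt\Delta(A_{n}+B_{0}\rs^{n+1}\log\rho)=\rs^{n}\big(\phi_{n}-(n+1)B_{0}\big)+O(\rho^{n+1}\log\rho)$, using only $\rs\wt\Delta\log\rho=(n+1)+O(\rho)$, forces $B_{0}|_\calN=\tfrac{(-1)^{n+1}}{(n+1)!\,n!}Pf$ with no further analysis of tails. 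So your worry about ``pinning down the constant'' disappears if you simply record $\phi_{k}$ as you go rather than recovering it at the end.
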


\begin{proof}
We prove the lemma by giving an inductive step to construct $A$ and $B$.
This procedure will be used repeatedly in this paper.  

We construct $A_k\in\wt\calE(0)$, $k\le n$, such that
\begin{equation}\label{appAk}
\wt\Delta A_k=\rs^k\phi_k \text{ for a } \phi_k\in\wt\calE(-k-1)
\quad\text{ and}\quad A_k|_\calN=f.
\end{equation}
By taking an extension of $f$ to $\wf\in\wt\calE(0)$, we may set $A_0=\wf$ 
and $\phi_0=\wt\Delta\wf$.
If $A_k$ is given, we set $A_{k+1}=A_k+\rs^{k+1}\psi_{k+1}$ with $\psi_{k+1}\in\wt\calE(-k-1)$ and compute
$$
\begin{aligned}
\wt\Delta A_{k+1}&=\wt\Delta A_k+[\wt\Delta, \rs^{k+1}]\psi_{k+1}+\rs^{k+1}\wt\Delta\psi_{k+1}\\
&=\rs^{k}\phi_k+(k+1)(n-k)\rs^{k}\psi_{k+1}+O(\rho^{k+1}).
\end{aligned}
$$
Then we obtain $A_{k+1}$ satisfying \eqref{appAk} by setting $\psi_{k+1}=\frac{-1}{(k+1)(n-k)}\phi_k$ and $\phi_{k+1}=\wt\Delta\psi_{k+1}$. It follows that $\phi_{n}={(-1)^n}(n!)^{-2}\wt\Delta^{n+1}\wf$.
This construction breaks down exactly when $k= n$, and at this step,
by using
$$
\rs\wt\Delta\log\rho=\Delta\log\rho=(n+1)+O(\rho),
$$
we have
$$
\begin{aligned}
\wt\Delta(A_{n}+B_0\rs^{n+1}\log\rho)
=\phi_n\rs^n-(n+1)B_0\rs^{n}+O(\rho^{n+1}\log\rho).
\end{aligned}
$$
Therefore, setting $B_0=\frac{(-1)^{n+1}}{(n+1)!n!}\wt\Delta^{n+1}\wf$, we can continue the inductive step to determine $A$ and $B$.
\end{proof}

From this proof we can also see that \eqref{appAk} determines $A_k$ mod $O(\rho^{k+1})$ for $k\le n$ and that $B$ mod $O(\rho^\infty)$ is independent of the choice of $\rho$, which is used to define the singularity $\log \rho$.

We shall reformulate \eqref{wtdeltaAB} on the complete manifold $(D,g)$.
Since $\rs\wt\Delta=\Delta$ on $\wt\calE(0)$, we can write \eqref{wtdeltaAB} as
\begin{equation}\label{deltaAB}
\Delta(A+B'\rho^{n+1}\log \rho)=O(\rho^\infty),
\quad A|_M=f,
\end{equation}
where
$A,B'\in C^\infty(\conj D)$. Then, in view of Lemma \ref{divlem}, 
we have
$$
B'|_M=\frac{(-1)^n}{(n+1)!n!} (Pf)_\th\quad\text{with }\ 
\th=\frac{i}2(\pa-\conj\pa)\rho|_{TM}.
$$

\subsection{CR pluriharmonic functions}
So far, the ambient metric construction of CR invariant operators is completely parallel to the conformal case.  We here mention one important property that is specific to the CR case.  Since the ambient metric is K\"ahler, $\ker\wt\Delta$ contains pluriharmonic functions, and accordingly $\ker P$ contains the space $\calP$ of CR pluriharmonic functions on $M$. 
Recall that a {\em CR function} is a complex valued $C^\infty$ function $f$ on $M$
such that $\conj\pa_b f=0$ and a {\em CR pluriharmonic function} is a real-valued function on $M$ that is locally the real part of a CR function.
By the strictly pseudoconvexity, we can extend a CR function to a holomorphic function on the pseudoconvex side of $M$; the same is true for a CR pluriharmonic function.

When $M=S^{2n+1}$ it is shown in that 
$
\ker P=\calP.
$
The equality is also confirmed in case $M$ is 3-dimensional and torsion-free \cite{GL}.
See \cite{CCY} for recent progress on the study of this equality.

\begin{lem}\label{Pformop}
Let $M\subset X$ be a strictly pseudoconvex CR manifold and $\iota\colon M\to X$ be 
the inclusion.

{\rm (i)} 
For a real-valued function $f\in\calE$,  take
an extension $\wt f\in\wt\calE(0)$ such that $\wt\Delta\wt f=O(\rho)$. 
Then
$
\bfP (f)=\iota^*(i\pa\conj\pa\wt f\,)
$
depends only on $f$ and defines an operator
$$
\bfP \colon\calE\to C^\infty(M,\wedge^2T^*M).
$$
Moreover, $\ker\bfP =\calP$ holds. 

\medskip

\noindent
{\rm (ii)} 
In terms of the Tanaka-Webster connection, one has
\begin{equation}\label{bPform}
\bfP f=P_{\alpha\conj\beta}f\,\th^\alpha\wedge\th^{\conj\beta}
+P_{\alpha}f\,\th\wedge\th^\alpha+
P_{\conj\beta}f\,\th\wedge\th^{\conj\beta},
\end{equation}
where
$$
P_{\alpha\conj\beta}f=i\tf f_{\alpha\conj\beta}
:=i\left(f_{\alpha\conj\beta}-\frac1n f_\gamma{}^\gamma \bh_{\alpha\conj\beta}\right),
\quad
  P_\alpha f =\frac{1}{n}f_{\conj\beta}{}^{\conj\beta}{}_{\alpha}+iA_{\alpha\beta}f^\beta,
 $$
 and these operators satisfy  
 \begin{equation}\label{Pdiv}
(P_{\alpha\conj\beta}f)_{,}{}^{\conj\beta}=i(n-1)P_\alpha f.
 \end{equation}
 \end{lem}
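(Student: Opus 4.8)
The plan is to separate the statement into three parts: the well-definedness and CR invariance of $\bfP$, the kernel identification $\ker\bfP=\calP$, and the explicit formula \eqref{bPform} with the divergence relation \eqref{Pdiv}. The first and third I would handle by direct computation with the ambient metric and the Tanaka--Webster connection; the middle one carries the real content. For well-definedness: an extension $\wt f\in\wt\calE(0)$ of $f$ with $\wt\Delta\wt f=O(\rho)$ exists by taking any ambient extension $\wt f_0$ and replacing it by $\wt f_0-n^{-1}\rs\,\wt\Delta\wt f_0$, which is the first step of the inductive construction in Lemma~\ref{CRGJMSlem}. If $\wt f_1,\wt f_2$ are two such extensions then $\wt f_1-\wt f_2=\rs\,\varphi$ with $\varphi\in\wt\calE(-1)$, and \eqref{commutator} gives $\wt\Delta(\rs\varphi)=(Z+\conj Z+n+2)\varphi+O(\rho)=n\varphi+O(\rho)$, forcing $\varphi=O(\rho)$, so $\wt f_1-\wt f_2=\rs^2\varphi'$ with $\varphi'\in\wt\calE(-2)$. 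In the coordinates of Proposition~\ref{Ricci-prop} the degree-zero function $\rs^2\varphi'$ descends to $r^2\chi$ on $X$ with $\chi$ smooth, and since $dr|_{TM}=0$ one has $\pa r|_{TM}=-\conj\pa r|_{TM}$, so $\pa r\wedge\conj\pa r|_{TM}=0$; expanding $i\pa\conj\pa(r^2\chi)$ and restricting to $M$ then gives $\iota^*\bigl(i\pa\conj\pa(\rs^2\varphi')\bigr)=0$. Hence $\bfP f$ is independent of the extension; it is real because $\overline{i\pa\conj\pa\wt f}=i\pa\conj\pa\wt f$, and since $i\pa\conj\pa\wt f$ is of type $(1,1)$ on $X$ its pullback to $M$ kills the $\th^\alpha\wedge\th^\beta$ and $\th^{\conj\alpha}\wedge\th^{\conj\beta}$ components, which already forces the shape of \eqref{bPform}; CR invariance is automatic from the construction.

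To obtain \eqref{bPform} I would fix a contact form $\th$ with its normalization, write $i\pa\conj\pa\wt f=i\wt\nabla_I\wt\nabla_{\conj J}\wt f\,dz^I\wedge d\conj z^J$ on $\calL_X^*$, and restrict the components to $TM$ using the standard dictionary between the ambient covariant derivatives $\wt\nabla_I$ along $\calN$ and the Tanaka--Webster operators $\nabla_\alpha$, $\nabla_0$ (with the ``$z^0$'' direction contributing the weight, here $0$). The $\th^\alpha\wedge\th^{\conj\beta}$ component comes out as $f_{\alpha\conj\beta}$ plus a multiple of $\bh_{\alpha\conj\beta}$ times the trace; the defining condition $\wt\Delta\wt f=O(\rho)$, i.e.\ $\wt g^{I\conj J}\wt\nabla_I\wt\nabla_{\conj J}\wt f=O(\rho)$, expresses that trace in terms of lower order data, leaving exactly $i\,\tf f_{\alpha\conj\beta}$. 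For the $\th\wedge\th^\alpha$ component one differentiates this trace relation once more and uses the commutation relations \eqref{commf} to reach $\tfrac1n f_{\conj\beta}{}^{\conj\beta}{}_\alpha$ together with the torsion correction $iA_{\alpha\beta}f^\beta$. Finally \eqref{Pdiv} is a direct computation: taking the divergence of $P_{\alpha\conj\beta}f=i\bigl(f_{\alpha\conj\beta}-\tfrac1n f_\gamma{}^\gamma\bh_{\alpha\conj\beta}\bigr)$ and commuting the $\conj\beta$ index past the others by \eqref{commf} produces the torsion term and the coefficient $n-1$; note this shows that for $n\ge2$ the $\calE_\alpha$-part of $\bfP$ is already determined by its trace-free $\calE_{\alpha\conj\beta}$-part.

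For the kernel, the inclusion $\calP\subseteq\ker\bfP$ is easy: if $f=\Re F$ near a point with $F$ a CR function, strict pseudoconvexity gives a holomorphic extension of $F$, smooth up to $M$, on the pseudoconvex side, so $f$ extends there to a pluriharmonic function; any $C^\infty$ extension $\wt f$ of this to $X$ near $M$ has $\pa\conj\pa\wt f$ vanishing to infinite order on $M$, so $\wt\Delta\wt f=O(\rho)$ and $\bfP f=\iota^*(i\pa\conj\pa\wt f)=0$. For the reverse inclusion I would argue that if $\bfP f=0$ then $f$ admits a \emph{formal} pluriharmonic extension along $M$: correcting $\wt f$ inductively by terms $\rs^k\psi_k$ so as to annihilate $i\pa\conj\pa\wt f$ to successively higher order, the only obstruction to the first step being $\iota^*(i\pa\conj\pa\wt f)=\bfP f=0$; one then passes from the formal solution to an actual pluriharmonic function with boundary value $f$ by correcting the resulting infinitely flat error, which is where embeddedness and strict pseudoconvexity enter (cf.\ \S7). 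Alternatively, \eqref{bPform} identifies $\bfP$ with the classical operator characterizing CR pluriharmonic functions --- for $n\ge2$, vanishing of the trace-free complex Hessian $\tf f_{\alpha\conj\beta}$, and for $n=1$ the third-order condition $P_1f=0$ --- for which $\ker\bfP=\calP$ is known.

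The main obstacle is the inclusion $\ker\bfP\subseteq\calP$, and within it two points: that the vanishing of the single tensor $\bfP f$ removes \emph{all} higher obstructions to a formal pluriharmonic extension (a Bianchi-type identity, analogous to the role of the single obstruction tensor in the Fefferman--Graham expansion), and the promotion of a formal pluriharmonic extension to a genuine one in the $C^\infty$ category. The remaining steps --- well-definedness, the easy inclusion, and the formulas \eqref{bPform}, \eqref{Pdiv} --- are finite computations with the ambient metric and the Tanaka--Webster connection, the most laborious being the ambient-to-Tanaka--Webster reduction needed for \eqref{bPform}.
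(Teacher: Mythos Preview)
Your treatment of well-definedness matches the paper's, but for the hard inclusion $\ker\bfP\subseteq\calP$ and for part (ii) the paper uses a single structural observation that you overlook and that dissolves precisely the ``main obstacle'' you flag.

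The key point is that $\bfP f$ is an \emph{exact} $2$-form on $M$: since $i\pa\conj\pa\wt f=d\bigl(\tfrac{i}{2}(\conj\pa-\pa)\wt f\bigr)$, setting $\eta=\iota^*\bigl(i(\conj\pa-\pa)\wt f\bigr)$ one has $\bfP f=d\eta$, and $\eta=i(\pa_b-\conj\pa_b)f+\lambda\th$ for some real function $\lambda$. Now $\bfP f=0$ simply says $d\eta=0$, so locally $\eta=dh$ for a real $h$; restricting to $T^{0,1}$ gives $\conj\pa_b h=-i\conj\pa_b f$, i.e.\ $\conj\pa_b(f-ih)=0$, so $f=\Re(f-ih)\in\calP$. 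This is a two-line argument using only the Poincar\'e lemma on $M$; no formal pluriharmonic extension, no Bianchi-type vanishing of higher obstructions, and no passage from formal to genuine solutions is needed. Your proposed inductive construction of a pluriharmonic extension by correcting with $\rs^k\psi_k$ is genuinely problematic as stated: $\pa\conj\pa$ is a full $(1,1)$-form while each scalar $\psi_k$ gives essentially one degree of freedom per order, so the system is overdetermined and the claim that $\bfP f=0$ kills all higher obstructions is far from automatic.

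The same $1$-form $\eta$ streamlines part (ii). The trace-free condition $\wt\Delta\wt f=O(\rho)$ forces $\iota^*(i\pa\conj\pa\wt f)$ to be trace-free with respect to $\bh^{\alpha\conj\beta}$, which pins down $\lambda=\tfrac1n\Delta_b f$; then \eqref{bPform} is obtained by expanding $d\eta=d\bigl(i(\pa_b-\conj\pa_b)f+\tfrac1n\Delta_b f\,\th\bigr)$ directly on $M$ with the Tanaka--Webster connection, avoiding the ambient-to-intrinsic dictionary entirely. Likewise \eqref{Pdiv} falls out of $d(\bfP f)=d^2\eta=0$: the $(2,1)$-part of this identity, contracted with $\bh^{\gamma\conj\beta}$, gives exactly $(P_{\alpha\conj\beta}f)_{,}{}^{\conj\beta}=i(n-1)P_\alpha f$, with no separate commutator chase. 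Your direct computations would also arrive at the formulas, but the exactness of $\bfP f$ is what ties the whole lemma together.
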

 \begin{proof}
 (i)
 Since $\wf$ is unique modulo $O(\rho^2)$, the ambiguity causes the difference
 $$
 \pa\conj\pa(\rs^2 \varphi)=\varphi\pa\rs\wedge\conj\pa\rs+O(\rho),
 $$
but $\iota^*(\pa\rs\wedge\conj\pa\rs)=\th\wedge\th=0$. Thus $\bfP f$ is well-defined.

 Let $\eta=\iota^*(i(\conj\pa-\pa)\wt f)$. Then we have $\eta=i(\pa_b-\conj\pa_b)f+\lambda\th$ for a real function $\lambda$, and $\bfP f=d\eta$.  If $\bfP f=0$, then $d\eta=0$. So, locally one may find a real function $h$ such that
$dh=i(\pa_b-\conj\pa_b)f+\lambda\th$.  Restricting this formula to $T^{0,1}$ gives $\conj\pa_b(f-ih)=0$.  Conversely, if $f$ is CR pluriharmonic, then we may take $\wf$ to be pluriharmonic.  Then $\pa\conj\pa\wf=0$, and $\bfP f=0$ follows.  

(ii) Let $f,\eta, \lambda$ be as above.
Since $i\pa\conj\pa\wt f$ is trace-free with respect to $\wt g^{I\conj J}$ on $\calN$, so is $\iota^*(i\pa\conj\pa\wt f)$ with respect to $\bh^{\alpha\conj\beta}$.
It follows that $\tf d\eta=0$, which forces 
 $\eta=i(\pa_b-\conj\pa_b)f+(1/n)\Delta_bf \th$. The expression of $P_{\alpha\conj\beta}$ and $P_\alpha$ are obtained by expanding $d\eta=d(i(\pa_b-\conj\pa_b)f+(1/n)\Delta_bf \th)$. To prove \eqref{Pdiv}, we apply $d$ to \eqref{bPform}. 
Then the type $(2,1)$ part gives the identity
$$
(P_{\alpha\conj\beta} f_{,\gamma}-iP_{\alpha}f\, \bh_{\gamma\conj\beta})\th^\gamma\wedge\th^{\alpha}\wedge\th^{\conj\beta}=0,
$$
or equivalently
$
P_{[\alpha|\conj\beta}f_{,|\gamma]}=iP_{[\alpha}f\, \bh_{\gamma]\conj\beta}.
$
Taking contraction with $\bh^{\gamma\conj\beta}$ and using $P_\alpha{}^\alpha f=0$, we get \eqref{Pdiv}.
\end{proof}

In the case $n=1$, we trivially have $P_{1\conj 1}f=0$ 
and $\bfP f=0$ is reduced to $P_1f=0$, while for
$n>1$, $P_{\alpha\conj\beta}f=0$ forces $P_{\alpha}f=0$ so that
$\bfP f=0$ if and only if  $P_{\alpha\conj\beta}f=0$.
By the construction, $\bfP$ is a CR invariant operator, and so are
$$
\begin{aligned}
P_{\alpha\conj\beta}&\colon\calE\longrightarrow\ \ \calE_{\alpha\conj\beta} \quad &&\text{for }n>1
\\
P_{\alpha}\ &\colon\calE\longrightarrow\calE_{\alpha}(-1) \quad&&\text{for }n=1.
\end{aligned}
$$
It should be worth noting that these operators arise from the compositions of more primitive CR invariant operators
$$
\begin{aligned}
P_{\alpha\conj\beta}&\colon\calE\xrightarrow{\conj\pa_b}\calE_{\conj\beta}\longrightarrow\ \ \calE_{\alpha\conj\beta} \quad &&\text{for }n>1
\\
P_{\alpha}\ &\colon\calE\xrightarrow{\conj\pa_b}\calE_{\conj\beta}\longrightarrow\calE_{\alpha}(-1) \quad&&\text{for }n=1.
\end{aligned}
$$
The second operators can be read from the formulas of $P_{\alpha\conj\beta}$ and $P_{\alpha}$.
These are parts of Bernstein-Gelfand-Gelfand sequence constructed in \cite{CSS}.
This is also a specific feature of CR case.

In the following sections, we use pluriharmonic extensions of $f\in\calP$.
By the strict pseudoconvexity, we can extend $f$ to a pluriharmonic function on a neighborhood $U$ of $M$ in $\conj D$.  If $D$ is Stein, we can further extend $\wf$ to a pluriharmonic function on $D$ as a consequence of Hartogs extension theorem \cite{BF}.
Thus, for such domains, $\calP$ can be identified with the space of pluriharmonic functions on $D$ with $C^\infty$ boundary values.
The extension of CR pluriharmonic functions holds for more general case; see \S7.

\section{$P$-prime operator}

\subsection{Definition of $P$-prime operator}

For a contact form $\th$ on $M\subset X$, we take a normalized defining function $\rho$.
Then we may decompose $g$ into two parts:
$$
g=-i\pa\conj\pa\log \rho-i\pa\conj\pa \log h_\th,
$$
where $h_\th=\rs/\rho$ and $\pa\conj\pa \log h_\th$ is smooth up to the boundary. Let us emphasize the fact that $h_\th$ is defined only mod $O(\rho)$ since $\rho$ has ambiguity of $O(\rho^2)$. We will
further normalize $h_\th$ and fix it mod $O(\rho^2)$ in the next section.
However, we here leave the maximum ambiguity as it causes no effect to the following 
\begin{prop}
Let $h_\th\in\wt\calE(1)$ be as above. For $f\in\calP$,
take its pluriharmonic extension $\wf$ to the pseudoconvex side of $M$. Then
$$
\wt\Delta^{n+1}\big(\wf\log h_\th\big)
\in\wt\calE(-n-1)
$$ and its value on $\calN$ is determined by $\th$ and $f$.
\end{prop}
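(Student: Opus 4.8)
The statement has two halves: that $\wt\Delta^{n+1}(\wf\log h_\th)$ lies in $\wt\calE(-n-1)$, so that its restriction to $\calN$ is a CR density of weight $-n-1$; and that this restriction is independent of the auxiliary choices made, namely the pluriharmonic extension $\wf$ beyond its jet along $M$ and the normalized defining function $\rho$ --- equivalently $h_\th$, which is pinned down only modulo $O(\rho)$. The plan is to get the first half from the dilation equivariance of the construction and the second from the commutator identities \eqref{commutator} together with a homogeneity count.

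\emph{Homogeneity.} Since $\rs$ and $\rho$ are defining functions positive on the pseudoconvex side, $h_\th=\rs/\rho$ is smooth and positive on $\{\rho\ge0\}$ near $\calN$, so $\wf\log h_\th$ is smooth there and $\wt\Delta^{n+1}(\wf\log h_\th)$ does restrict to $\calN$. Now $\wf\in\wt\calE(0)$ is constant on the fibers, hence $\delta_\lambda^*\wf=\wf$, while $\delta_\lambda^*\rs=|\lambda|^2\rs$ gives $\delta_\lambda^*\log h_\th=\log h_\th+\log|\lambda|^2$. The dilation is a homothety, $\delta_\lambda^*\wt g=|\lambda|^2\wt g$, so $\delta_\lambda^*\circ\wt\Delta^{n+1}=|\lambda|^{-2(n+1)}\,\wt\Delta^{n+1}\circ\delta_\lambda^*$; and $\wt\Delta\wf=0$ because $\wt g$ is K\"ahler and $\wf$ is pluriharmonic. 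Combining,
\begin{align*}
\delta_\lambda^*\bigl(\wt\Delta^{n+1}(\wf\log h_\th)\bigr)
&=|\lambda|^{-2(n+1)}\,\wt\Delta^{n+1}\bigl(\wf\log h_\th+(\log|\lambda|^2)\,\wf\bigr)\\
&=|\lambda|^{-2(n+1)}\,\wt\Delta^{n+1}(\wf\log h_\th),
\end{align*}
which is exactly $\wt\Delta^{n+1}(\wf\log h_\th)\in\wt\calE(-n-1)$.

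\emph{Independence of the choices.} Two pluriharmonic extensions of $f$ agree to infinite order along $M$ --- the Taylor expansion of a pluriharmonic extension along a strictly pseudoconvex hypersurface is determined by the boundary value by solving $\pa\conj\pa\wf=0$ order by order, as recalled in the introduction --- so they differ by an $O(\rho^\infty)$ function, which the finite-order operator $\wt\Delta^{n+1}$ kills on $\calN$. Replacing $\rho$ within its $O(\rho^2)$-freedom replaces $h_\th$ by $h_\th+\rs\,c$ with $c\in\wt\calE(0)$, hence $\log h_\th$ by $\log h_\th+\rs\,\psi$ with $\psi\in\wt\calE(-1)$ smooth near $\calN$, so that $\wf\log h_\th$ changes by $\rs\,\chi$ with $\chi:=\wf\psi\in\wt\calE(-1)$. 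By \eqref{commutator},
\begin{equation*}
\wt\Delta^{n+1}(\rs\,\chi)=\rs\,\wt\Delta^{n+1}\chi+(n+1)(Z+\conj Z+2n+2)\,\wt\Delta^{n}\chi;
\end{equation*}
the first term is $O(\rho)$, and the second vanishes identically because $\wt\Delta^{n}\chi\in\wt\calE(-n-1)$, on which $Z+\conj Z$ acts by multiplication by $-2n-2$. Hence $\wt\Delta^{n+1}(\rs\,\chi)|_\calN=0$, and $\wt\Delta^{n+1}(\wf\log h_\th)|_\calN$ depends only on $\th$ (through $h_\th$) and on $f$ (through $\wf$). Likewise the residual $O(\rho^{n+3})$-freedom of the ambient $\rs$ from Proposition \ref{Ricci-prop} changes $\wf\log h_\th$ by a term in $\rs^{n+2}\,\wt\calE(-n-2)$, and a similar iteration of \eqref{commutator}, as in the inductive step of the proof of Lemma \ref{CRGJMSlem}, shows $\wt\Delta^{n+1}$ of such a term is $O(\rho)$.

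The only delicate point is this last $\rho$-ambiguity: it perturbs $h_\th$ merely at order $O(\rho)$, so a crude order-of-vanishing estimate for $\wt\Delta^{n+1}$ does not suffice --- one must use the resonance in \eqref{commutator}, i.e.\ the vanishing of the factor $Z+\conj Z+2n+2$ on homogeneity weight $-(n+1)$, to see that the offending contribution never reaches $\calN$. The rest is bookkeeping with homogeneity degrees.
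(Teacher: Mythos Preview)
Your proof is correct, and the argument for independence from the $h_\th$ ambiguity is the same as the paper's (both invoke the commutator \eqref{commutator} and the vanishing of $Z+\conj Z+2n+2$ on weight $-n-1$). The homogeneity half, however, is handled differently. The paper expands $\wt\Delta(\wf\log h_\th)$ by the Leibniz rule, writes $\log h_\th=\log|z^0|^2+\varphi$ in a local fiber coordinate, and checks directly that the surviving terms $-\langle d\wf,d\log h_\th\rangle_{\wt g}+\wf\,\wt\Delta\log h_\th$ lie in $\wt\calE(-1)$; one more application of $\wt\Delta^n$ then gives weight $-n-1$. Your route is more conceptual: you use the homothety $\delta_\lambda^*\wt g=|\lambda|^2\wt g$ and the pluriharmonicity $\wt\Delta\wf=0$ to read off the weight from the dilation law, avoiding any coordinate computation. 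Both are short; yours makes the mechanism (failure of homogeneity of $\log h_\th$ compensated by $\ker\wt\Delta$) transparent, while the paper's version has the side benefit of exhibiting $\wt\Delta(\wf\log h_\th)$ explicitly, which is reused later (e.g.\ in the proof of Lemma~4.4). You are also more thorough than the paper in enumerating the ambiguities: the paper checks only the $h_\th$ freedom, while you also dispose of the $\wf$ ambiguity (via the formal uniqueness of the pluriharmonic jet) and the $O(\rho^{n+3})$ freedom in $\rs$. One small caveat on the latter: changing $\rs$ perturbs not only $\log h_\th$ but also the ambient metric $\wt g$ and hence $\wt\Delta$ itself; your sentence only tracks the first effect. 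The second is harmless for the same order-counting reason (the perturbation of $\wt\Delta$ is $O(\rho^{n+1})$ times a second-order operator), and the paper simply defers this to the standard GJMS argument, but strictly speaking it should be mentioned.
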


\begin{defn}\label{def-pp} The
{\em $P$-prime operator} for $\th$ is defined by
$$
P'\colon\calP\to\calE(-n-1),\quad
P'f=-\wt\Delta^{n+1}\big(\wf\log h_\th\big)|_{\calN}
$$
and the
{\em $Q$-curvature} is defined by the constant term of $P'$:
$$
Q=-\wt\Delta^{n+1}\big(\log h_\th\big)|_{\calN}\in\calE(-n-1).
$$
\end{defn}
\begin{proof} 
By the Leibniz rule, we have
\begin{equation}\label{expandwflog}
\wt\Delta (\wf\log h_\th)=\wt\Delta\wf\cdot\log h_\th
-\langle d\wf,d\log h_\th\rangle_{\wt g}+\wt f\,\wt\Delta\log h_\th.
\end{equation}
The first term vanishes since $\pa\conj\pa\wf=0$.
To compute the last two terms, take a fiber coordinate $z^0$ and write
$\log h_\th=\log|z^0|^2+\varphi(z)$, where $\varphi\in\wt\calE(0)$ is smooth up to $\calN$. Then $\pa\log h_\th=dz^0/z^0+\pa\varphi$ and we see that
$$
-\langle d\wf,d\log h_\th\rangle_{\wt g}+\wt f\,\wt\Delta\log h_\th\in\wt\calE(-1).
$$
Therefore $\wt\Delta^{n+1}(\wf\log h_\th)$ is an element of $\wt\calE(-n-1)$.

We next examine the effect of the ambiguity in $h_\th$.
Suppose that $\wh h_\th$ is another choice.
Then
$
\log\wh h_\th-\log h_\th=\rs\varphi
$
for $\varphi\in\wt\calE(-1)$. Thus,  we have
$$
\wt\Delta^{n+1}(\wf\log\wh h_\th-\wf\log h_\th)=\wt\Delta^{n+1}\rs\wf\varphi=O(\rho),
$$
which shows the second claim. 
\end{proof}

The transformation law of $P'$ naturally follows from the definition.

\begin{prop}\label{trans-PP-prop} 
If $\wh \th=e^\up\th$, then
\begin{equation}\label{trans-PP}
\wh P'f=P'f+P(\up f),\quad f\in\calP.
\end{equation}
\end{prop}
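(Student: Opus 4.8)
The plan is to trace through how the two ingredients in the definition of $P'$ — the pluriharmonic extension $\wf$ and the weight function $h_\th$ — transform under $\wh\th=e^\up\th$, and then use the GJMS machinery of Lemma \ref{CRGJMSlem} (or rather \eqref{{GJMS-comp}}) to absorb the discrepancy. First I would note that the pluriharmonic extension $\wf$ of $f\in\calP$ does not depend on the contact form at all: it is intrinsically attached to $f$ and $M\subset X$. So the only change comes from $h_\th$. If $\rho$ is normalized by $\th$ and $\wh\rho$ by $\wh\th$, then one may choose $\wh\rho=e^{\up}\rho$ (extending $\up$ to $\wt\calE(0)$), so that $\wh h_{\wh\th}=\rs/\wh\rho=e^{-\up}h_\th$ modulo $O(\rho)$, i.e.
\[
\log\wh h_{\wh\th}=\log h_\th-\up+\rs\,\psi
\]
for some $\psi\in\wt\calE(-1)$. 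The term $\rs\psi$ contributes nothing after applying $\wt\Delta^{n+1}$ and restricting to $\calN$, exactly as in the proof of the previous proposition (the computation $\wt\Delta^{n+1}(\wf\rs\psi)=O(\rho)$ via \eqref{{GJMS-comp}}).

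Consequently, modulo terms vanishing on $\calN$,
\[
\wt\Delta^{n+1}\big(\wf\log\wh h_{\wh\th}\big)
=\wt\Delta^{n+1}\big(\wf\log h_\th\big)-\wt\Delta^{n+1}\big(\wf\,\up\big).
\]
Now $\wf\,\up$ is an ambient extension of $f\up\in\calE(0)$ (here we use $\up\in\wt\calE(0)$), and by the very construction of the GJMS operator $P=P_{n+1}$ we have $\wt\Delta^{n+1}(\wf\up)|_\calN=P(f\up)$, independent of the choice of extension. Hence $\wh P'f=-\wt\Delta^{n+1}(\wf\log\wh h_{\wh\th})|_\calN=P'f+P(\up f)$, which is \eqref{trans-PP}. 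I should double-check the sign bookkeeping against the sign convention $P'f=-\wt\Delta^{n+1}(\wf\log h_\th)|_\calN$ and against the fact that $P$ here is $\wt\Delta^{n+1}$ restricted, not the normalized constant multiple appearing in Lemma \ref{CRGJMSlem}; but the statement is written precisely so that these match.

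The main obstacle — really the only subtlety — is justifying that the difference $\log\wh h_{\wh\th}-\log h_\th$ is $-\up$ modulo $\rs\cdot\wt\calE(-1)$, including the claim that $\up$ extends to a genuine element of $\wt\calE(0)$ (degree zero, hence a function on $X$) rather than merely a boundary datum, and that the choice of extension is irrelevant. The extension ambiguity of $\up$ is of the form $\rho\cdot(\text{smooth})$, and $\wf\cdot\rho\cdot(\text{smooth})$ is $\rs\cdot\wt\calE(-1)$ up to the boundary (since $\rho=\rs/h_\th$ and $h_\th$ is nonvanishing), so again it dies under $\wt\Delta^{n+1}|_\calN$. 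Everything else is the Leibniz-rule bookkeeping already carried out in the proof of the preceding proposition, so I would keep the write-up short, citing \eqref{{GJMS-comp}} for the vanishing of $\rs$-multiples and the definition of $P_{n+1}$ for the identification of the correction term.
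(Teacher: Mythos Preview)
Your proposal is correct and follows essentially the same approach as the paper: extend $\up$ to $\wt\calE(0)$, observe $\log h_{\wh\th}=\log h_\th-\up$ (up to the $\rs\cdot\wt\calE(-1)$ ambiguity already shown to be harmless), multiply by $\wf$, apply $\wt\Delta^{n+1}$, and identify the correction term with $P(\up f)$. The paper's proof is three lines and cites the preceding well-definedness argument implicitly; your version simply spells out those details.
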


\begin{proof}
We extend $\up$ to $\up\in\wt\calE(0)$.
Then we may set $h_{\wh\th} =e^{-\up} h_{\th}$ and so
$$
\log h_{\wh \th}=\log h_\th-\up.
$$
Applying $\wt\Delta^{n+1}$ to this equation, we obtain \eqref{trans-PP}.
\end{proof}

As in the case of $P$-operator, we can characterize $P'$ as an obstruction to the existence of smooth harmonic extension. This formulation will be used in the next subsection to prove the formal self-adjointness of $P'$.

\begin{lem}
Suppose that $\rho$ is normalized by $\th$.
Then, for each pluriharmonic function $f$ on $\conj D$,
there exist  $F, G\in C^\infty(\conj D)$ such that $F=O(\rho)$ and
\begin{equation}\label{ppeq}
\Delta(f\log\rho-F-G \,\rho^{n+1}\log\rho)=(n+1)f+O(\rho^\infty).
\end{equation}
Moreover, $G|_M=\frac{(-1)^{n+1}}{(n+1)!n!}(P'f)_\th$ holds.
\end{lem}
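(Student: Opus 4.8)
The plan is to reduce the claimed equation \eqref{ppeq} on $(D,g)$ to the ambient statement already packaged in the definition of $P'$, much as \eqref{deltaAB} was obtained from Lemma \ref{CRGJMSlem}. First I would pass from the ambient Laplacian $\wt\Delta$ on $\wt\calE(0)$ to the K\"ahler Laplacian $\Delta$ of $g$ on $C^\infty(D)$ using the identity $\rs\,\wt\Delta u=\Delta u$ for $u\in\wt\calE(0)$, together with $\rs=h_\th\,\rho$ with $h_\th$ nonvanishing up to the boundary. Write $\log\rs=\log\rho+\log h_\th$; since $f$ is pluriharmonic, $\wt\Delta(\wf\log\rs)=\wt\Delta(\wf\log\rho)+\wt\Delta(\wf\log h_\th)$, and the first term on the right is governed by $\Delta\log\rho=(n+1)+O(\rho)$ (the computation already used in the proof of Lemma \ref{CRGJMSlem}), which is what produces the inhomogeneous right-hand side $(n+1)f$. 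The second term, by Definition \ref{def-pp}, is the object whose restriction to $\calN$ is $-P'f$.

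Concretely, I would run the same inductive improvement used in Lemma \ref{CRGJMSlem}. Starting from $u_0=f\log\rho$ (lifted to $\wt\calE(0)$), one has $\wt\Delta u_0 = (n+1)f\,\rho^{-1}+O(\rho^0)$ after multiplying by $\rs$, i.e. $\Delta(f\log\rho)=(n+1)f+O(\rho)$; but there are also contributions $-\langle d\wf, d\log h_\th\rangle_{\wt g}+\wf\,\wt\Delta\log h_\th\in\wt\calE(-1)$, which is precisely the $k=0$ error term of the kind appearing in \eqref{appAk}. One then corrects by subtracting a function $F=O(\rho)$, chosen order by order exactly as the $A_k$ were built: at each stage $k\le n$ the operator coefficient $(k+1)(n-k)$ is invertible, so $F$ is determined mod $O(\rho^{n+1})$ and improves the error to $\rho^n\phi_n$ with $\phi_n$ a nonzero constant multiple of $\wt\Delta^{n+1}(\wf\log h_\th)$. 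At $k=n$ the step breaks down and one inserts the log term $G\,\rho^{n+1}\log\rho$; using $\rs\,\wt\Delta\log\rho=\Delta\log\rho=(n+1)+O(\rho)$ again, matching coefficients forces $G|_M$ to be a universal constant times $\big(\wt\Delta^{n+1}(\wf\log h_\th)\big)|_\calN=-(P'f)_\th$, and tracking the constants through the induction (each correction contributes a factor, giving the cumulative $(n!)^{-2}$ and then $1/((n+1)!n!)$ from the final log step, with the sign $(-1)^{n+1}$) yields $G|_M=\frac{(-1)^{n+1}}{(n+1)!n!}(P'f)_\th$. Lemma \ref{divlem}(ii) is what converts the homogeneous ambient restriction into the $\th$-trivialized density $(P'f)_\th$, and fixes that the constant is $(-1)^{n+1}$ rather than $(-1)^n$ (compare the remark after \eqref{deltaAB}).

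\textbf{Main obstacle.} The only real subtlety is bookkeeping: one must check that $F$ can genuinely be taken to vanish to first order (the $k=0$ correction has no $\rho^0$ part because the ambient extension $\wf$ of a pluriharmonic $f$ is pluriharmonic, so $\wt\Delta\wf=0$ identically and the leading error is honestly $O(\rho^{-1})\cdot\rho=O(\rho^0)$ of the right homogeneity to be absorbed into the $F$-series rather than into $f$), and that the constant emerging at the $k=n$ step is exactly the one already recorded for $P'$ in Lemma \ref{CRGJMSlem}-type arguments. I also need the remainder in $\Delta\log\rho=(n+1)+O(\rho)$ and in $\rs^{-1}$ expansions to be $C^\infty$ up to the boundary, which holds because $\rho$ is a smooth normalized defining function (the logarithmic singularities of the Cheng--Yau solution are irrelevant here, as noted in the introduction). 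Granting these, \eqref{ppeq} follows by the same termination-of-induction argument as in Lemma \ref{CRGJMSlem}, and the identification of $G|_M$ is immediate from that construction.
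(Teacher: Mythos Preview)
Your proposal is correct and follows essentially the same approach as the paper: reduce the equation on $(D,g)$ to the ambient equation $\wt\Delta(f\log h_\th+F+\wt G\,\rs^{n+1}\log\rho)=O(\rho^\infty)$ via $\rs\wt\Delta=\Delta$ on $\wt\calE(0)$ and the splitting $\log\rs=\log\rho+\log h_\th$, then invoke the inductive construction of Lemma~\ref{CRGJMSlem} to identify $G|_M$. The paper's version is slightly more compact in that it computes $\rs\wt\Delta(f\log h_\th)=(n+1)f-\Delta(f\log\rho)$ in one stroke using $\log h_\th=\log\rs-\log\rho$ and $Zf=\conj Zf=\Delta f=0$, rather than re-running the induction in detail, but the content is the same.
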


\begin{proof}
By following the proof of Lemma \ref{CRGJMSlem}, we see that
\begin{equation}\label{ppambeq}
\left\{
\begin{aligned}
&\wt\Delta(f\log h_\th+F+\wt G\,\rs^{n+1}\log\rho)=O(\rho^\infty),
\\
& F\in\wt\calE(0),\quad F|_\calN=0,\quad \wt G\in\wt\calE(-n-1)
\end{aligned}
\right.
\end{equation}
admits a solution and $\wt G|_\calN=\frac{(-1)^{n+1}}{(n+1)!n!}P'f$ holds.
On the other hand,
$$
\begin{aligned}
r_\sharp \wt\Delta(f\log h_\th)
&=r_\sharp \wt\Delta(f\log r_\sharp-f\log \rho)\\
&=(\Delta f)\log\rs-Z f-\conj Z f+(n+1)f-\Delta(f\log \rho)\\
&=(n+1)f-\Delta(f\log \rho).
\end{aligned}
$$
Thus \eqref{ppambeq} is reduced to \eqref{ppeq}.
\end{proof}

\subsection{Self-adjointness of $P'$}\label{PPprsec}
Now we are ready to prove one of our main results.

\begin{thm}\label{selfP}
Let $M$ be the boundary of a relatively compact strictly pseudoconvex domain $D$ in a Stein manifold.
Then the operators $P$ and $P'$ defined with respect to a contact form $\th$
on $M$ are formally self-adjoint respectively on $\calE$ and $\calP$, i.e.,
$$
\begin{aligned}
\int_M (f_1 Pf_2- f_2P f_1)&=0\quad \text{for any }f_1,f_2\in\calE
\\
\int_M (f_1 P'f_2- f_2P' f_1)&=0\quad \text{for any }f_1,f_2\in\calP.
\end{aligned}
$$
\end{thm}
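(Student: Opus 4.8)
\emph{Overall strategy.} The plan is to deduce both identities from Green's second identity on the sublevel domains $D_\epsilon=\{\rho>\epsilon\}$, computed with respect to an asymptotically Einstein--K\"ahler metric $g$ on $D$ (Theorem~\ref{CYthm}; for a strictly pseudoconvex $D$ in a Stein manifold such a $g$, defined on all of $D$, is available), where $\rho$ is normalized by $\th$. In each case the left-hand side of Green's identity will be an integral of an $O(\rho^\infty)$ density, hence convergent as $\epsilon\to0^+$; consequently the boundary integral over $\{\rho=\epsilon\}$ has no negative powers of $\epsilon$ and no $\log\epsilon$ in its asymptotic expansion, and a direct evaluation of the $\log\epsilon$-coefficient of that boundary integral produces the relevant integral over $M$, which therefore vanishes.

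\emph{The operator $P$.} Given $f_1,f_2\in\calE$, use Lemma~\ref{CRGJMSlem} in the reformulated shape \eqref{deltaAB} to build $u_i=A_i+B_i'\rho^{n+1}\log\rho$ with $A_i,B_i'\in C^\infty(\conj D)$, $\Delta u_i=O(\rho^\infty)$, $A_i|_M=f_i$ and $B_i'|_M=\tfrac{(-1)^n}{(n+1)!\,n!}(Pf_i)_\th$. Then
$$
\int_{D_\epsilon}(u_1\Delta u_2-u_2\Delta u_1)\,dv_g
=-\int_{\{\rho=\epsilon\}}(u_1\,\partial_\nu u_2-u_2\,\partial_\nu u_1)\,dA_\epsilon .
$$
Since $\Delta u_i=O(\rho^\infty)$ while $dv_g=O(\rho^{-n-2})$, the left side is $\int_{D_\epsilon}O(\rho^\infty)\,dv_g$, which converges as $\epsilon\to0^+$, so the $\log\epsilon$-coefficient of the boundary integral vanishes. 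Now, near $M$ the unit normal derivative is $\partial_\nu=|d\rho|_g\,\partial_\rho$ with $|d\rho|_g\sim c\,\rho$, and $dA_\epsilon\sim c'\,\epsilon^{-n-1}\,\th\wedge d\th^n$; a direct check shows that the only contribution to the $\log\epsilon$-coefficient comes from the cross terms in which $\partial_\nu$ falls on the factor $\rho^{n+1}\log\rho$, and it equals a nonzero universal constant times $\int_M(f_1Pf_2-f_2Pf_1)$. This gives the first identity.

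\emph{The operator $P'$.} Here the Stein hypothesis enters through Hartogs' extension for pluriharmonic functions (\S7, \cite{BF}): each $f_i\in\calP$ has a pluriharmonic extension $\wt f_i$ on all of $D$, so $\Delta\wt f_i=0$ there. Using \eqref{ppeq}, write $v_i=\wt f_i\log\rho-F_i-G_i\rho^{n+1}\log\rho$ with $F_i,G_i\in C^\infty(\conj D)$, $F_i=O(\rho)$, $\Delta v_i=(n+1)\wt f_i+\psi_i$, $\psi_i=O(\rho^\infty)$, and $G_i|_M=\tfrac{(-1)^{n+1}}{(n+1)!\,n!}(P'f_i)_\th$. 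Set $I_{ij}(\epsilon)=\int_{D_\epsilon}(\wt f_j\Delta v_i-v_i\Delta\wt f_j)\,dv_g$; since $\Delta\wt f_j=0$ this equals $(n+1)\int_{D_\epsilon}\wt f_i\wt f_j\,dv_g+\int_{D_\epsilon}\wt f_j\psi_i\,dv_g$, whence
$$
I_{12}(\epsilon)-I_{21}(\epsilon)=\int_{D_\epsilon}(\wt f_2\psi_1-\wt f_1\psi_2)\,dv_g ,
$$
a convergent integral (the $\int_{D_\epsilon}\wt f_i\wt f_j$-terms being symmetric in $i,j$). By Green's identity the left side also equals $-\int_{\{\rho=\epsilon\}}[(\wt f_2\,\partial_\nu v_1-v_1\,\partial_\nu\wt f_2)-(\wt f_1\,\partial_\nu v_2-v_2\,\partial_\nu\wt f_1)]\,dA_\epsilon$, and convergence forces the $\log\epsilon$-coefficient of this boundary integral to vanish. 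In the boundary integrand the $\wt f_i\wt f_j$-pieces cancel under antisymmetrization; the ``energy'' pieces obtained by differentiating the factors $\wt f_i\log\rho$ are $\log\rho$ times $\wt f_i\,\partial_\nu\wt f_j$-type terms, so they contribute to the $\log\epsilon$-coefficient exactly the constant term of $\int_{\{\rho=\epsilon\}}(\wt f_1\,\partial_\nu\wt f_2-\wt f_2\,\partial_\nu\wt f_1)\,dA_\epsilon$, which is identically zero because $\Delta\wt f_i=0$; what survives is a nonzero universal constant times $\int_M(f_1P'f_2-f_2P'f_1)$, which hence vanishes.

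\emph{Main obstacle.} The delicate part in each case is the last step: the explicit evaluation of the boundary integral on $\{\rho=\epsilon\}$ --- isolating the $\log\epsilon$-coefficient, verifying the cancellation of the symmetric/energy terms via the auxiliary identity, identifying the surviving coefficient with a multiple of $\int_M(f_1Pf_2-f_2Pf_1)$, resp.\ $\int_M(f_1P'f_2-f_2P'f_1)$, and confirming that the resulting universal constant is nonzero --- all while tracking the polyhomogeneous asymptotics of $g$ near $M$ (which carry a $\rho^{n+2}\log\rho$ term) so that no unanticipated powers of $\log\epsilon$ enter the expansion.
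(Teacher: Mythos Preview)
Your proposal is correct and follows essentially the same route as the paper: Green's formula on the sublevel sets $\{\rho>\epsilon\}$ for the asymptotically Einstein--K\"ahler metric, followed by an asymptotic analysis of the boundary integral to isolate the $\log\epsilon$ coefficient, with the crucial auxiliary identity $\int_{\rho=\epsilon}(\wt f_1\,\nu\wt f_2-\wt f_2\,\nu\wt f_1)\,d\sigma=0$ (from harmonicity of pluriharmonic extensions) handling the extra cross terms in the $P'$ case. The only cosmetic difference is that the paper uses Green's \emph{first} identity, writing the manifestly symmetric energy integral $\int_{\rho>\epsilon}\langle\partial u_1,\partial u_2\rangle\,dv_g$ (respectively a symmetric trilinear form for $P'$) and then reading off the $\log\epsilon$ coefficient of the boundary term as a constant multiple of $\int_M f_1Pf_2$, whereas you use Green's \emph{second} identity and argue that the antisymmetrized boundary integral must converge, hence have vanishing $\log\epsilon$ coefficient; these are equivalent formulations of the same computation.
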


The self-adjointness of $P$ has been known for general nondegenerate CR manifolds;
a proof can be found in Gover-Graham \cite{GG}, which reduces the problem
to the conformal GJMS operator via Fefferman's conformal structure. 
We here give a direct proof by using Green's formula for the metric $g$ on $D$.
The argument for $P$ is completely parallel to the conformal (or conformally compact Einstein) case; the one for $P'$ requires an additional observation based
on the pluriharmonic extension of $\calP$.

We first formulate Green's formula for the metric $g$ by following \cite{S}.  It is then convenient to use a defining function $\rho$ normalized by $\th$ and
$
 \|\pa\log\rho\|_g=1
$
near $M$ in $D$. 
With this $\rho$, we define a foliation of $D$ by CR manifolds
$M_\epsilon=\{\rho=\epsilon\}$.
Let $\wt\calH$ be the complex subbundle of $T^{1,0}X$ defined by
$$
\wt T^{1,0}=\{W\in T^{1,0}X:W\rho=0\}.
$$
Then there is a unique type $(1,0)$-vector field $\xi$ such that
$$
\xi\perp_g\wt T^{1,0},\qquad \xi\rho=1.
$$
Let us write $\xi=N+(i/2)T$.  While $g$ diverges at $M$, we
can still show that $\xi$ is continuous up to the boundary $M$ and
$T$ agrees with the Reeb vector field for $\th$ on $M$; see \cite[Lem.\ 3.2]{S}.
The unit outward normal vector field of $M_\epsilon$, for small $\epsilon>0$, is given by
$$
\sqrt{2}\,\nu\quad\text{with}\quad\nu=\rho N.
$$
Since the volume form of $g$ is of the form
$$
dv_g=c_n\big(1+O(\rho)\big)\rho^{-n-2}d\rho\wedge\th\wedge d\th^{n},
\quad c_n=\frac{-1}{n!},
$$
we have
$$
\nu\CR dv_g=\rho^{-n-1}d\sigma,
\quad\text{where}\ \
d\sigma=c_n\big(1+O(\rho)\big)\th\wedge d\th^{n}.
$$
Here $\th$ is extended to $\conj D$ by $\th=\frac{i}{2}(\pa-\conj\pa)\rho$.
Thus  Green's formula for the K\"ahler Laplacian $\Delta=-\nabla^j\nabla_j$ (the half of Riemannian Laplacian) on the subdomain $\rho>\epsilon$ gives
\begin{equation}\label{green}
\int_{\rho>\epsilon} \big(-u\Delta  v+\langle \pa u,\pa v\rangle_g \big)dv_g
=\epsilon^{-n-1}\int_{\rho=\epsilon}u \cdot \nu v\,d\sigma.
\end{equation}
for $u,v\in C^\infty(D)$.

\medskip

\begin{proof}[Proof of Theorem \ref{selfP}.]
Suppose that $u_j=A_j+B_j\rho^{n+1}\log\rho$ is a solution to
$\Delta u_j=0$ with $A_j|_{\rho=0}=f_j$ for $j=1,2$.
Using $\Delta u_2=0$, we have
\begin{equation}\label{Pform}
\int_{\rho>\epsilon} 
\langle \pa u_1,\pa u_2 \rangle dv_{g}
=\epsilon^{-n-1}\int_{\rho=\epsilon}
u_1 \cdot\nu u_2 d\sigma.
\end{equation}
On the other hand,
$$
\begin{aligned}
\rho^{-n-1}u_1\cdot\nu u_2&=(\rho^{-n-1}A_1+B_1\log\rho)\cdot\nu(A_2+B_2\rho^{n+1}\log\rho)\\
&=(\rho^{-n-1}A_1+B_1\log\rho)\\
&\qquad\times(O(\rho)+\big((n+1)B_2\rho^{n+1}+O(\rho^{n+2})\big)\log\rho).
\end{aligned}
$$
The coefficient of $\log\rho$ of the right-hand side is
$
(n+1)A_1B_2+O(\rho).
$
Since $B_2$ is a constant multiple of $(Pf_2)_\th$, we see that
the coefficient of $\epsilon^0\log\epsilon$ of the right-hand side of \eqref{Pform} is
a constant multiple of 
$$
\int_{M}f_1 Pf_2.
$$
This is symmetric in $f_1$ and $f_2$ since the left-hand side of \eqref{Pform} is symmetric in $u_1$ and $u_2$.

We next consider $P'$. Let $f_j$ be pluriharmonic functions on $\conj D$ and 
$$
u_j=f_j\log \rho-F_j-G_j \rho^{n+1}\log\rho
$$
be the solutions to $\Delta u_j=(n+1) f_j$.
We consider the following symmetric bilinear form:
\begin{equation}\label{ipform}
\begin{aligned}
\int_{\rho>\epsilon} 
\Big(\langle \pa f_1,&\pa u_2 \rangle+\langle \pa u_1,\pa f_2 \rangle
-(n+1) f_1 f_2\Big) dv_{g}\\
&=
\int_{\rho>\epsilon} 
\Big(f_1\big(\Delta u_2-(n+1) f_2\big) + u_1\Delta  f_2 
 \Big)dv_{g}\\
&\qquad\quad\quad\quad
 +\epsilon^{-n-1}\!\!\!
 \int_{\rho=\epsilon}\Big( f_1 \cdot\nu u_2+ u_1\cdot \nu f_2\Big)\,d\sigma\\
&=\epsilon^{-n-1}\!\!\!
 \int_{\rho=\epsilon}\Big( f_1 \cdot\nu u_2+ u_1\cdot \nu f_2\Big)\,d\sigma.
\end{aligned}
\end{equation}
Since
$$
f_1\cdot\nu u_2=f_1\big( \nu f_2-(n+1)G_2 \rho^{n+1}\big)\log \rho+\text{(smooth in $\rho$)} 
$$
and
$$
u_1 \cdot\nu f_2=f_1 \cdot\nu f_2\cdot\log\rho+O(\rho^{n+2}\log\rho)+\text{(smooth in $\rho$)},
$$ 
we see that the coefficient of $\log \epsilon$ of the last line of \eqref{ipform} is
$$
-(n+1)\int_M f_1 G_2 d\sigma+
2\epsilon^{-n-1}\!\!\!\int_{\rho=\epsilon} f_1\cdot\nu f_2\,d\sigma+O(\epsilon).
$$
The first term is a constant multiple of 
$$
\int_M f_1P' f_2
$$
and thus it suffices to  show that the second term is symmetric in $f_1$ and $f_2$.
But this is clear from Green's formula:
$$
\epsilon^{-n-1}\!\!\int_{\rho=\epsilon}(f_1\cdot\nu f_2-
 f_2\cdot\nu f_1)d\sigma
=\int_{\rho>\epsilon}(f_1\Delta f_2-f_2\Delta f_1) dv_{g}=0.
\qedhere
$$
\end{proof}

\begin{rem}\rm
In the proof above, the K\"ahler condition is used to ensure that pluriharmonic functions are harmonic; thus we cannot replace $g$ by a hermitian metric in the last formula. 
We  have also used K\"ahlerness in  Green's formula, but  this  not essential since
we can replace $\pa$ by $d$ and obtain the similar formula for Riemannian Laplacian.
\end{rem}
\section{$Q$-prime curvature}

\subsection{Definition of the $Q$-prime curvature}
Recall that the $Q$-curvature is given by $Q=-\wt\Delta^{n+1}\log h_\th|_\calN$
and hence $Q=0$ if $\log h_\th$ is pluriharmonic. Such a contact from can be characterized by the pseudo-Einstein condition introduced by Lee \cite{L2}.

\begin{prop}\label{psudoEinsteinprop}
Let $M^{2n+1}$ be an embeddable CR manifold and $\rho$ be a defining function
normalized by $\th$.
 Then the following conditions are equivalent:
\begin{itemize}
\item[(1)] 
$\log(\rs/\rho)|_\calN$ is CR pluriharmonic.

\item[(2)]  The curvature and torsion of $\th$ satisfy
$$
\begin{cases}\tf \Ric_{\alpha\conj\beta}=0 &\text{if } n>1\\
\Scal_{,1}-iA_{11,}{}^{1}=0& \text{if } n=1.
\end{cases}
$$

\item[(3)]  Locally, $\th$ is volume-normalized with respect to a 
closed section of $\calK_M$.

\end{itemize}
\end{prop}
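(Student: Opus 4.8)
The plan is to establish $(1)\Leftrightarrow(3)$ directly from the Monge--Amp\`ere setup of Proposition \ref{Ricci-prop}, and to obtain $(2)\Leftrightarrow(3)$ from Lee's characterization of pseudo-Einstein structures \cite{L2}; I will also indicate how $(1)\Leftrightarrow(2)$ can be read off directly from the Tanaka--Webster Ricci tensor, which is the route that makes the case $n=1$ transparent.

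For $(1)\Leftrightarrow(3)$, work in local coordinates $(z^0,z)$ on $\calL_X^*$ with $z^0$ taken with respect to $(dz^1\wedge\cdots\wedge dz^{n+1})^{1/(n+2)}$ and write $\rs=|z^0|^2r(z)$, so that $h_\th=\rs/\rho=|z^0|^2(r/\rho)$; since $\log|z^0|^2$ is CR pluriharmonic on $\calN$ (the function $z^0$ being CR there) and a change of fiber coordinate only contributes pluriharmonic terms, condition $(1)$ says precisely that the boundary value of the genuine function $\log(r/\rho)$ on $M$ is CR pluriharmonic. Applying the identity $\calJ_z[u]=u^{n+2}\det\bigl(-\pa_j\pa_{\conj k}\log u\bigr)$ to $u=r$ and $u=\rho$ (the case $u=r$ being $\calJ_z[r]=r^{n+2}\det(g_{j\conj k})$ of \S2.2) gives
$$
(n+2)\log(r/\rho)=\log\calJ_z[r]-\log\calJ_z[\rho]-\log\det\bigl(g\,g'^{-1}\bigr),
$$
where $g_{j\conj k}=-\pa_j\pa_{\conj k}\log r$ and $g'_{j\conj k}=-\pa_j\pa_{\conj k}\log\rho$. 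The last term is $O(\rho)$: here $g-g'=-i\pa\conj\pa\log(r/\rho)$ is bounded while $g'^{-1}=O(\rho)$ for the complete K\"ahler metric $g'=-i\pa\conj\pa\log\rho$, so $\det(g\,g'^{-1})=\det\bigl(I+(g-g')g'^{-1}\bigr)=1+O(\rho)$. Restricting to $M$ and using $\log\calJ_z[r]|_M=2\Re f|_M\in\calP$ (with $f$ the holomorphic function from the proof of Proposition \ref{Ricci-prop}), this shows $\log(r/\rho)|_M$ is CR pluriharmonic if and only if $\log\calJ_z[\rho]|_M$ is.

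It remains to identify the latter condition with $(3)$. By Farris' formula (as used in Lemma \ref{divlem}(i)), $\th$ is volume-normalized with respect to a local section $\zeta=\mu\,dz|_M$ of $\calK_M$ exactly when $\calJ_z[\rho]|_M=|\mu|^2$; and such a $\zeta$ is closed exactly when $\mu$ is CR, since $dz|_M=\iota^*(dz^1\wedge\cdots\wedge dz^{n+1})$ is itself a closed section (the pullback of a closed $(n+1,0)$-form) and $d(\mu\,dz|_M)=d\mu\wedge dz|_M$ is, for degree reasons, a multiple of $\mu_{\conj\alpha}\,\th^{\conj\alpha}\wedge\th\wedge\th^1\wedge\cdots\wedge\th^n$, which vanishes precisely when $\conj\pa_b\mu=0$. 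As a nonvanishing CR function on a strictly pseudoconvex $M$ is the boundary value of a nonvanishing holomorphic function, $\th$ admits a local closed volume-normalizing section iff $\calJ_z[\rho]|_M=|\mu|^2$ for some nonvanishing CR $\mu$, iff $\log\calJ_z[\rho]|_M=2\Re\log\mu$ is CR pluriharmonic. Combined with the previous paragraph this yields $(1)\Leftrightarrow(3)$, while $(2)\Leftrightarrow(3)$ is Lee's theorem \cite{L2}.

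For the direct link with $(2)$: writing $\th=e^\up\th_0$ with $\th_0=\tfrac{i}{2}(\pa-\conj\pa)r|_M$ and $r$ chosen (after a coordinate change) so that $\calJ_z[r]=1+O(\rho^{n+2})$, one has $h_\th=|z^0|^2e^{-\up}$ and hence $(1)$ reduces to $\up|_M\in\calP$; since $\th_0$ satisfies $(3)$ (it is volume-normalized with respect to the closed section $dz|_M$, because $\calJ_z[r]|_M=1$) it is pseudo-Einstein, and $e^\up\th_0$ is pseudo-Einstein iff $\up$ is CR pluriharmonic. For $n\geq2$ this is immediate from the transformation law of the trace-free part of $\Ric_{\alpha\conj\beta}$, namely $\tf\wh\Ric_{\alpha\conj\beta}$ being a nonzero multiple of $\tf\up_{\alpha\conj\beta}=-iP_{\alpha\conj\beta}\up$, which vanishes exactly when $\bfP\up=0$. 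The main obstacle I anticipate is the case $n=1$, where $\tf\Ric$ is vacuous and one must instead track the divergence-type quantity $\Scal_{,1}-iA_{11,}{}^1$ (equivalently the component $P_\alpha\up$ of $\bfP$ from Lemma \ref{Pformop}) under the conformal change $\wh\th=e^\up\th$; this is a genuine pseudo-hermitian computation via Proposition \ref{transf-prop} rather than a formal consequence of the ambient construction. A secondary point that needs care is the asymptotic estimate $\det(g\,g'^{-1})=1+O(\rho)$, which rests on the bound $g'^{-1}=O(\rho)$ for the complete K\"ahler metric.
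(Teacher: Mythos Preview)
Your proposal is correct, and your primary route---proving $(1)\Leftrightarrow(3)$ directly via the Monge--Amp\`ere identity $\calJ_z[u]=u^{n+2}\det(-\pa_j\pa_{\conj k}\log u)$ and Farris' formula---is genuinely different from what the paper does. The paper instead introduces the $2$-form valued invariant $\bfS(\th)=-\iota^*(i\pa\conj\pa\log h_\th)$ (after further normalizing $\rho$ so that $\wt\Delta\log h_\th=O(\rho)$), shows the transformation law $\bfS(e^\up\th)=\bfS(\th)+\bfP\up$, and computes $\bfS$ explicitly in Tanaka--Webster terms as $S_{\alpha\conj\beta}=i\tf\Ric_{\alpha\conj\beta}$, $S_\alpha=(1/n)\Scal_{,\alpha}-iA_{\alpha\beta,}{}^\beta$; this gives $(1)\Leftrightarrow(2)$ directly, and $(2)\Leftrightarrow(3)$ is then cited from \cite{L2} (for $n>1$) and \cite{H1} (for $n=1$). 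Your secondary sketch of $(1)\Leftrightarrow(2)$ via the conformal change $\th=e^\up\th_0$ is in fact precisely the paper's argument, just without naming $\bfS$. What your Monge--Amp\`ere route buys is a clean, self-contained link between $(1)$ and $(3)$ that avoids the explicit curvature computation; what the paper's $\bfS$-route buys is the explicit formulas for $S_{\alpha\conj\beta}$ and $S_\alpha$, which are needed later in \S6.3 for the $P'$ operator. One small point: your citation of Lee for $(2)\Leftrightarrow(3)$ covers only $n>1$; for $n=1$ the paper cites \cite{H1}, and your own last paragraph correctly identifies that this case requires the transformation law of $\Scal_{,1}-iA_{11,}{}^1$ (equivalently $P_1\up$), which is the content of that reference.
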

\begin{defn}\label{pe-def}
A contact form $\th$ is {\em pseudo-Einstein} if one of the equivalent conditions above holds. We will denote by $\calPE$ the space of all pseudo-Einstein contact forms on $M$.
\end{defn}

The equivalence of $(2)$ and $(3)$ has been proved in \cite[Th.\ 4.2]{L2} and \cite[Lem.\ 7.2]{H1} respectively in $n>1$ and $n=1$ cases.  
In our previous paper \cite{FH}, we call such $\th$ {\em invariant contact form}
since the Einstein condition is trivial in the case $n=1$.  However, this terminology is  featureless and we here propose to use {\em pseudo-Einstein} in all dimensions.
We believe this will not cause any confusion.

Pending the proof of this proposition, given at the end of this section, we shall give the definition of  $Q$-prime curvature.

\begin{lem}\label{defqplem} Let $M^{2n+1}$ be an embeddable CR manifold
which admits a pseudo-Einstein contact form $\th$.
Then there exists a unique defining function $\rho$ normalized by 
$\th$ such that $h_\th=\rs/\rho$ satisfies 
$$
\pa\conj\pa\log h_\th=0
$$
on the pseudoconvex side of $\calN$. For such an $h_\th$, 
$(\wt\Delta^{n+1}\log^2 h_\th)|_\calN$ is an element of $\calE(-n-1)$ and is determined by $\th$.
\end{lem}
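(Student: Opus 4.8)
The plan is to establish Lemma~\ref{defqplem} in three stages: first, the existence and uniqueness of the normalized defining function $\rho$ with $\pa\conj\pa\log h_\th=0$; second, the homogeneity count showing $(\wt\Delta^{n+1}\log^2 h_\th)|_\calN\in\calE(-n-1)$; and third, the independence of this restriction on the residual ambiguity, so that it depends only on $\th$.

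\emph{Existence and uniqueness of $\rho$.} By Proposition~\ref{psudoEinsteinprop}, since $\th$ is pseudo-Einstein, condition~(1) tells us that for \emph{some} normalized defining function $\rho_0$, the function $\log(\rs/\rho_0)|_\calN$ is CR pluriharmonic. By the strict pseudoconvexity and the embedding, this CR pluriharmonic boundary function extends to a pluriharmonic function $\Upsilon$ on the pseudoconvex side of $\calN$ (indeed, we may first extend it on $X$ using that $\log h_\th\in\wt\calE(0)$ descends to $X$, then pull back). Set $\rho=e^{\Upsilon}\rho_0$; this is still a defining function. I would check that $\rho$ is still normalized by $\th$: since $\Upsilon|_\calN$ is what matches $\log(\rs/\rho_0)$, the scaled function $h_\th=\rs/\rho=e^{-\Upsilon}(\rs/\rho_0)$ has $\log h_\th=\log(\rs/\rho_0)-\Upsilon$, which vanishes on $\calN$ \emph{and} is pluriharmonic on the pseudoconvex side, hence identically zero there? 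No --- that would be too strong. The point is only that $\log h_\th=\log(\rs/\rho_0)-\Upsilon$ is pluriharmonic (difference of the CR-pluriharmonic-extended quantity and $\Upsilon$, both pluriharmonic near $\calN$); so $\pa\conj\pa\log h_\th=0$ as required. The residual freedom in $\rho$ is exactly: $\rho\mapsto e^\psi\rho$ with $\psi$ pluriharmonic and $\psi|_M=0$; by the uniqueness of pluriharmonic extension of zero boundary data, $\psi\equiv0$ near $\calN$, giving uniqueness of $h_\th$ mod the intrinsic ambiguity $O(\rho^{n+3})$ in $\rs$ itself. I would be careful to state exactly in what sense uniqueness holds (mod higher order, consistent with Proposition~\ref{Ricci-prop}).

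\emph{Homogeneity count.} With $\pa\conj\pa\log h_\th=0$, write in a fiber coordinate $\log h_\th=\log|z^0|^2+\varphi(z)$ as in the proof after Definition~\ref{def-pp}; pluriharmonicity forces $\varphi$ pluriharmonic on $X$ near $M$, so $\log^2 h_\th=(\log|z^0|^2+\varphi)^2$. Apply the Leibniz rule to $\wt\Delta(\log^2 h_\th)=2\log h_\th\cdot\wt\Delta\log h_\th-2\langle d\log h_\th,d\log h_\th\rangle_{\wt g}$. The first term vanishes since $\log h_\th$ is pluriharmonic (hence $\wt g$-harmonic). So $\wt\Delta(\log^2 h_\th)=-2\|d\log h_\th\|^2_{\wt g}$; since $d\log h_\th=dz^0/z^0+\pa\varphi+\text{conj}$ and $\wt g\in\wt\calE(1)$ acts on the frame appropriately, one checks $\|d\log h_\th\|^2_{\wt g}\in\wt\calE(-1)$. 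Then iterating $\wt\Delta\colon\wt\calE(-k)\to\wt\calE(-k-1)$ gives $\wt\Delta^{n+1}\log^2 h_\th\in\wt\calE(-n-1)$, whose restriction to $\calN$ is a well-defined density in $\calE(-n-1)$.

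\emph{Independence on ambiguity.} The remaining ambiguity in $h_\th$ is of the form $\log\wh h_\th=\log h_\th+\rs\,\psi$ with $\psi\in\wt\calE(-1)$ (coming from the $O(\rho^{n+3})$ freedom; note this $\psi$ may in general not vanish at $\calN$ to high order, but it carries a factor $\rs$). Then $\log^2\wh h_\th-\log^2 h_\th=2\rs\psi\log h_\th+\rs^2\psi^2$. Applying $\wt\Delta^{n+1}$ and using the commutator formula~\eqref{commutator}, $[\wt\Delta^{n+1},\rs]=(n+1)(Z+\conj Z+n+2)\wt\Delta^{n}$, each term picks up a factor $\rs$ after moving $\wt\Delta^{n+1}$ past $\rs$ once (the leftover being $O(\rho)$), \emph{provided} the weight-shift argument in the proof of Lemma~\ref{CRGJMSlem} applies --- here the subtlety is the extra $\log h_\th$ factor, which is harmless under $\wt\Delta$ since it is pluriharmonic, but contributes $\langle d(\rs\psi),d\log h_\th\rangle$ cross terms; these are $O(\rho^0)$ but multiplied by $\rs$ so still $O(\rho)$, hence vanish on $\calN$. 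So $(\wt\Delta^{n+1}\log^2\wh h_\th-\wt\Delta^{n+1}\log^2 h_\th)|_\calN=0$.

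\textbf{Main obstacle.} The delicate point is the third stage: carefully tracking that the $O(\rho^{n+3})$-type ambiguity in $h_\th$ --- which is \emph{not} simply $\rs\cdot(\text{bounded})$ but genuinely higher-order --- produces only terms killed by restriction to $\calN$ after $n+1$ Laplacians, given that $\log^2 h_\th$ is quadratic in $\log h_\th$ and $\log h_\th$ itself is merely $O(\rho)$ near $\calN$ (not better). One must combine the pluriharmonicity (to kill $\log h_\th\cdot\wt\Delta(\cdot)$ terms) with the commutator identities~\eqref{commutator} and the weight bookkeeping exactly as in Lemma~\ref{CRGJMSlem}, checking that the induction does not "break down" before step $n$. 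I would isolate this as the crux and treat the homogeneity count and the existence of $\rho$ as the routine parts.
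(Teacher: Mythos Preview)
Your existence argument in stage~1 has a genuine gap. You take an arbitrary $\rho_0$ normalized by $\th$, extend $\log(\rs/\rho_0)|_\calN$ to a pluriharmonic function $\Upsilon$, and set $\rho=e^\Upsilon\rho_0$. But $\log(\rs/\rho_0)$ is not homogeneous of degree $(0,0)$: under $\delta_\lambda$ it shifts by $\log|\lambda|^2$, and its pluriharmonic extension $\Upsilon$ inherits this. Hence $e^\Upsilon\in\wt\calE(1)$ and your $\rho=e^\Upsilon\rho_0$ lies in $\wt\calE(1)$, not $\wt\calE(0)$; it is not a function on $X$ and cannot serve as a defining function of $M$. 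Even setting that aside, your $\log h_\th=\log(\rs/\rho_0)-\Upsilon$ is \emph{not} pluriharmonic: $\Upsilon$ is, but $\log(\rs/\rho_0)$ is not for a generic $\rho_0$, and your parenthetical ``both pluriharmonic near $\calN$'' conflates the CR-pluriharmonic boundary datum with an arbitrary smooth extension. The paper's remedy is to introduce a fiber coordinate $z^0$, write $\rs=|z^0|^2r(z)$, and decompose $\log h_\th=\log|z^0|^2-\up$ with $\up\in\wt\calE(0)$. Since $\log|z^0|^2$ is automatically pluriharmonic, the task reduces to making $\up$ pluriharmonic with its boundary value $\up|_M$ fixed; pseudo-Einstein gives $\up|_M\in\calP$, and one then replaces $\up$ by its pluriharmonic extension. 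Separating off the fiber-homogeneous piece $\log|z^0|^2$ is precisely the idea your argument is missing.

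Your stage~2 is correct and in fact slightly cleaner than the paper's three-term expansion $\log^2 h_\th=\log^2|z^0|^2-2\up\log|z^0|^2+\up^2$. In stage~3, however, you misidentify the residual ambiguity: once $\rho$ is unique given $\rs$ (your own stage~1), the only freedom left is the $O(\rho^{n+3})$ ambiguity in $\rs$, which perturbs $\log h_\th$ at order $O(\rho^{n+2})$ --- not $\rs\psi$ with $\psi\in\wt\calE(-1)$ --- and simultaneously perturbs the operator $\wt\Delta$. You treat only an $O(\rho)$ change in $h_\th$ and ignore the change in $\wt\Delta$ altogether; your commutator sketch for $\wt\Delta^{n+1}(2\rs\psi\log h_\th)$ would also have to handle the term $(\wt\Delta(\rs\psi))\cdot\log h_\th$, which does not vanish. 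The paper dispatches this step by noting it is the same GJMS-type count as in the construction of $P$.
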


\begin{defn}\label{def-Qp}
The {\em $Q$-prime curvature} of $\th\in\calPE$ is defined by
\begin{equation}\label{defQp}
Q'=(\wt\Delta^{n+1}\log^2 h_\th)|_\calN\in\calE(-n-1).
\end{equation}
\end{defn}

\begin{proof}
Take a fiber coordinate $z^0$ and write $\rs=|z^0|^2r(z)$ for $r\in\wt\calE(0)$. 
Then, choosing $\up$ so that $\rho=e^\up r$, we have 
$$
\log h_\th=\log |z^0|^2-\up.
$$
Since $\log h_\th$ and $\log |z^0|^2$ are CR pluriharmonic on $\calN$, so is
$\up$. Thus we may modify $\up$ to a pluriharmonic function by keeping the value on $M$.  With this $\up$, the desired function is uniquely given by $\rho=e^\up r$.

We next compute $ \wt\Delta^{n+1}\log^2 h_\th$ by using $z^0$:
$$
 \wt\Delta^{n+1}\log^2 h_\th= \wt\Delta^{n+1}\log^2 |z^0|^2
 -2\wt\Delta^{n+1}\up\log |z^0|^2+\wt\Delta^{n+1}\up^2.
$$
We know that last two terms are in $\wt\calE(-n-1)$; in fact they define $2P'\up$ and $P\up^2$.
For the first term, we have
$$
\wt\Delta\log^2 |z^0|^2=2\wt\Delta|\log z^0|^2=-
|z^0|^{-2}\langle dz^0, dz^0\rangle_{\wt g}\in\wt\calE(-1).
$$
Thus 
$
\wt\Delta^{n+1}\log^2 |z^0|^2\in\wt\calE(-n-1),
$
which gives $(\wt\Delta^{n+1}\log^2 h_\th)|_\calN\in\calE(-n-1)$.
Finally, 
we need to check that $Q'$ is independent of the ambiguity of the ambient metric. But it is exactly same as the argument in the construction of $P$.
\end{proof}

If $M$ is the boundary of a domain in $\bC^{n+1}$, by choosing a defining function $r$ satisfying $\calJ_z(r)=1+O(r^{n+2})$, we may set $\rs=|z^0|^2r$. Then  
$$
Q'=(\wt\Delta^{n+1}\log^2 |z^0|^2)|_\calN
$$
as we stated in the introduction.

The following properties of the $Q$-prime curvature are natural consequences of the definition.

\begin{prop}\label{transQP}
{\rm (i)}
If $\th\in\calPE$, then $\wh\th=e^\up\th\in\calPE$ for $\up\in\calP$
and the $Q$-prime curvature for $\wh\th$ satisfies
\begin{equation}\label{QPtrans}
\wh Q'=Q'+2P'\up+P(\up^2).
\end{equation}

{\rm (ii)} If $M$ is the boundary of a relatively compact strictly pseudoconvex domain in a Stein manifold, then
the {\em total $Q'$-curvature } 
$$
\overline Q'=\int_M Q'
$$
is independent of the choice of $\th\in\calPE$.

\end{prop}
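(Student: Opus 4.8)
The plan is to prove the two parts in order, deducing the second from the first together with the self-adjointness results.

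\textbf{Part (i): the transformation law.} First I would recall from Proposition~\ref{psudoEinsteinprop} that the pseudo-Einstein condition is characterized by $\log h_\th$ being CR pluriharmonic, equivalently by $\th$ being volume-normalized with respect to a closed section of $\calK_M$ locally. Under $\wh\th=e^\up\th$ with $\up\in\calP$, the relevant closed section changes by multiplication by a CR function (the boundary value of a holomorphic function whose real part is $\up$), so $\wh\th$ is again volume-normalized with respect to a closed section; hence $\wh\th\in\calPE$. Then, choosing the normalized defining functions $\rho$ and $\wh\rho=e^\up\rho$ as in Lemma~\ref{defqplem} (extending $\up$ to a pluriharmonic function on the ambient space), we get $\log h_{\wh\th}=\log h_\th-\up$, so
\begin{equation*}
\log^2 h_{\wh\th}=\log^2 h_\th-2\up\log h_\th+\up^2.
\end{equation*}
Applying $\wt\Delta^{n+1}$ and restricting to $\calN$, the first term gives $Q'$, the last gives $P(\up^2)$ by definition of the GJMS operator on $\calE$, and the middle term gives $-2(-\wt\Delta^{n+1}(\up\log h_\th))|_\calN=2P'\up$ by Definition~\ref{def-pp}, since $\up$ is pluriharmonic. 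This yields \eqref{QPtrans}. The one point requiring a little care is that $P'\up$ as defined uses the pluriharmonic \emph{extension} of $\up$, and $P(\up^2)$ uses some ambient extension of $\up^2$; but these are precisely the choices produced above, and the well-definedness statements already proved (independence of the $O(\rho)$ and $O(\rho^2)$ ambiguities) guarantee the identity holds for the intrinsic operators.

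\textbf{Part (ii): invariance of the integral.} Given $\th,\wh\th\in\calPE$, any two pseudo-Einstein contact forms on a connected $M$ with $H^1$ vanishing in the relevant sense differ by $\wh\th=e^\up\th$ with $\up\in\calP$ — and in the Stein case this is exactly the setting where CR pluriharmonic functions extend (Hartogs, as recalled before \S4), so $\up$ is the boundary value of a pluriharmonic function on $D$. Integrating \eqref{QPtrans} over $M$,
\begin{equation*}
\int_M \wh Q'=\int_M Q'+2\int_M P'\up+\int_M P(\up^2).
\end{equation*}
By Theorem~\ref{selfP}, $P$ is formally self-adjoint on $\calE$ and $P'$ is formally self-adjoint on $\calP$; combined with $P1=0$ and $P'1=0$ (the constant term of $P$ is the $Q$-curvature of $\th$, which vanishes since $\th$ is pseudo-Einstein, and likewise $P'1=Q'$-independent vanishing follows because $\wf=f$ for $f=1$ gives $\wt\Delta^{n+1}\log h_\th|_\calN=-Q=0$), self-adjointness gives $\int_M P'\up=\int_M 1\cdot P'\up=\int_M \up\cdot P'1=0$ and $\int_M P(\up^2)=\int_M 1\cdot P(\up^2)=\int_M \up^2\cdot P1=0$. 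Hence $\int_M \wh Q'=\int_M Q'$, so $\overline{Q}'$ is independent of the choice of pseudo-Einstein contact form.

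\textbf{Main obstacle.} The delicate point is Part~(ii)'s reduction step: one must know that any two pseudo-Einstein contact forms differ by $e^\up$ with $\up\in\calP$ \emph{globally}, which uses both the characterization in Proposition~\ref{psudoEinsteinprop}(3) (closed sections of $\calK_M$) and the global extension of CR pluriharmonic functions to pluriharmonic functions on $D$ in the Stein case — the latter being the Hartogs-type result flagged for \S7. Everything else is bookkeeping with the already-established well-definedness and self-adjointness statements, plus the vanishing $P1=P'1=0$, which itself follows from $\th\in\calPE\Rightarrow Q=0$.
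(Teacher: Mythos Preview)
Your proof is correct and follows essentially the same approach as the paper: expand $\log^2 h_{\wh\th}=\log^2 h_\th-2\up\log h_\th+\up^2$, apply $\wt\Delta^{n+1}$ and restrict to $\calN$, then integrate and use the self-adjointness of $P$ and $P'$ together with $P1=P'1=0$. One small note: the ``main obstacle'' you flag is slightly misplaced---that $\up\in\calP$ whenever $\th,\wh\th\in\calPE$ is a local fact on $M$ (from $\bfS(\wh\th)=\bfS(\th)+\bfP\up$, Lemma~\ref{S-lem}), requiring no cohomological hypothesis, while the Stein assumption enters only through Theorem~\ref{selfP}, which you already invoke as a black box.
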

\begin{proof}
(i)
The first assertion, $\up\in\calP$, has been shown in \cite{L1}; another proof will be given in Lemma \ref{S-lem} (iii) below.
We take a pluriharmonic extension of $\up$ and denote it by the same letter. Then we have $h_{\wh\th}=e^{-\up}h_\th$, or
$
\log h_{\wh\th}=\log h_\th-\up
$.
Thus
$$
\log^2 h_{\wh\th}=\log^2 h_\th-2 \up\log h_\th+\up^2.
$$
Applying $\wt\Delta^{n+1}$ gives \eqref{QPtrans}.

(ii)
By Theorem \ref{selfP},  $P$ and $P'$ are formally self-adjoint respectively on $\calE$ and $\calP$.  Thus noting the fact that $P1=P'1=0$, we see from
\eqref{QPtrans} that
$$
\int_M\wh Q'=\int_M Q'+2P'\up+P(\up^2)=\int_M Q'
$$
as claimed.
\end{proof}

\subsection{Total $Q$-prime curvature}
We shall write the total $Q'$-curvature as the log term coefficients in the asymptotic expansions of the integrals of natural volume forms on $D$.
For a function $f(\epsilon)$ of $\epsilon>0$ of the form
$f(\epsilon)=\varphi(\epsilon)\epsilon^{-N}+\psi(\epsilon)\log\epsilon$,
with $\varphi,\psi\in C^\infty(\bR)$, we set $
\lp f=\psi(0)$.

\begin{thm} \label{totalQpthm}
Let $r$ be a defining function of a strictly pseudoconvex domain $D$ in a Stein manifold such that the K\"ahler form $\omega=-i\pa\conj\pa\log r$ is asymptotically-Einstein.
Then the total $Q$-prime curvature of $M=\pa D$ can be written as the log term coefficients of the following two integrals:
\begin{equation}\label{Qplog}
c_n\,\conj Q'=\lp\int_{\rho>\epsilon}\|d\log r\|^2dv_g
=
\frac{-2}{(n+1)!}\lp\int_{\rho>\epsilon}\!
\left(\frac{\pa\conj\pa r}{ir}\right)^{n+1},
\end{equation}
where $c_n=(-1)^{n}/(n!)^{3}$, $\|\cdot\|$ is the norm for $g$ and $\rho$ is a defining function of $M$.
\end{thm}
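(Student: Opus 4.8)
\emph{The plan is to establish the two equalities separately; the second is essentially a pointwise identity, while the first rests on Green's formula for $(D,g)$.}

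\emph{Second equality.} Write $\omega_g=-i\pa\conj\pa\log r=g$ and use $\pa\conj\pa r=r\,\pa\conj\pa\log r+r^{-1}\,\pa r\wedge\conj\pa r$ to get $(\pa\conj\pa r)/(ir)=\omega_g-i\,\pa\log r\wedge\conj\pa\log r$. Since $(\pa\log r\wedge\conj\pa\log r)^2=0$ and $\omega_g^n\wedge(i\,\pa\log r\wedge\conj\pa\log r)=\tfrac1{n+1}\|\pa\log r\|_g^2\,\omega_g^{n+1}$, expanding the $(n+1)$st power gives
$$\Big(\frac{\pa\conj\pa r}{ir}\Big)^{n+1}=\big(1-\|\pa\log r\|_g^2\big)\,\omega_g^{n+1}=\Big(1-\tfrac12\|d\log r\|_g^2\Big)(n+1)!\,dv_g .$$
Integrating over $\{\rho>\epsilon\}$ and applying $\lp$, the second equality follows once we know $\lp\int_{\rho>\epsilon}dv_g=0$, i.e.\ that the renormalized volume of the complete Einstein--K\"ahler metric carries no $\log$ term; this is the vanishing of the total CR $Q$-curvature (here valid even pointwise, since $\th\in\calPE$) and is contained in \cite{S}, \cite{FH}.

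\emph{Main equality.} The key pointwise identity is $\Delta\log r=n+1$, the trace of the complex Hessian of the K\"ahler potential $\log r$; combined with $\Delta(u^2)=2u\,\Delta u-2\|\pa u\|_g^2$ this yields $\tfrac12\|d\log r\|_g^2=\|\pa\log r\|_g^2=(n+1)\log r-\tfrac12\Delta(\log^2r)$. Introducing the extension $\th=\tfrac i2(\pa-\conj\pa)r$ of the contact form near $M$ and $\beta=\th/r=\tfrac i2(\pa-\conj\pa)\log r$, so that $d\beta=\omega_g$ and (a short computation, the cross terms vanishing) $\beta\wedge\omega_g^n=r^{-n-1}\th\wedge d\th^n$, one rewrites the integrand as
$$\|d\log r\|_g^2\,dv_g=\frac{2}{n!}\,\log r\,\omega_g^{n+1}-\frac{2}{n!}\,d\big(\log r\;\beta\wedge\omega_g^n\big).$$
Taking the cutoff $\rho=r$ and applying Stokes' theorem (the orientation sign to be fixed by the matching below),
$$\int_{r>\epsilon}\|d\log r\|_g^2\,dv_g=2(n+1)\!\int_{r>\epsilon}\!\log r\,dv_g\ \pm\ \frac{2\log\epsilon}{n!}\,\epsilon^{-n-1}\!\!\int_{\{r=\epsilon\}}\!\iota_\epsilon^*(\th\wedge d\th^n),$$
where $\iota_\epsilon\colon\{r=\epsilon\}\hookrightarrow X$. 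Now $\th\wedge d\th^n$ is a fixed smooth $(2n+1)$-form on a neighborhood of $M$ in $X$, so $\int_{\{r=\epsilon\}}\iota_\epsilon^*(\th\wedge d\th^n)=\sum_{j\ge0}\Theta_j\epsilon^j$ is a convergent power series with $\Theta_0=\int_M\th\wedge d\th^n$; moreover the first summand contributes no $\epsilon^0\log\epsilon$ term once its $(\log\epsilon)^2$ term is removed — and that coefficient is a multiple of $\lp\int_{\rho>\epsilon}dv_g=0$. Hence $\lp\int_{r>\epsilon}\|d\log r\|_g^2\,dv_g=\pm\tfrac2{n!}\Theta_{n+1}$, and the proof reduces to identifying the local invariant $\Theta_{n+1}=\int_M t_{n+1}$ — $t_{n+1}$ being the $\epsilon^{n+1}$-Taylor coefficient of $\iota_\epsilon^*(\th\wedge d\th^n)$ — with $\tfrac{n!}2\,c_n\,\conj Q'=\tfrac{(-1)^n}{2(n!)^2}\int_MQ'$.

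\emph{Crux and main obstacle.} This last identification is where the ambient construction re-enters. Following the pattern of the proof of Theorem \ref{selfP} — and using the harmonic-extension characterization of Lemma \ref{CRGJMSlem} together with its $P'$-analogue — one reinterprets $\int_{r>\epsilon}\log r\,dv_g$ and the boundary integral through the harmonic expansion on $(D,g)$ associated to the pluriharmonic function $\log h_\th$, whose $\rho^{n+1}\log\rho$-coefficient is, by that $P'$-lemma, a universal multiple of $(Q')_\th$; reading off the $\log\epsilon$ coefficient then produces the asserted value and fixes the orientation sign. Assembling the normalizations that have appeared — the constant $-1/n!$ in $dv_g$, the constant $\tfrac{(-1)^{n+1}}{(n+1)!\,n!}$ of the harmonic-expansion lemma, and the factor $2(n+1)$ above — gives $c_n=(-1)^n/(n!)^3$. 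The genuinely delicate point throughout is that neither of the two summands in the displayed identity is individually of the admissible form $\varphi(\epsilon)\epsilon^{-N}+\psi(\epsilon)\log\epsilon$ appearing in the definition of $\lp$ — each carries spurious $\epsilon^{-m}\log\epsilon$ contributions — so one must check that these cancel in the sum, equivalently that the full expansion has the clean shape claimed in the theorem; this cancellation, and the precise constant in the final matching, are what demand the most care, and both trace back to the vanishing of the total $Q$-curvature.
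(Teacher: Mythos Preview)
Your treatment of the second equality is fine and matches the paper's argument: the pointwise decomposition
\[
\omega^{n+1}=\|\pa\log r\|^2\,\omega^{n+1}+\Big(\frac{\pa\conj\pa r}{ir}\Big)^{n+1}
\]
together with $\lp\int_{\rho>\epsilon}\omega^{n+1}=0$ (Stokes on $\omega^{n+1}=d(\beta\wedge\omega^n)$) gives it immediately.

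The first equality, however, has a genuine gap. Your identity $\|d\log r\|^2=2(n+1)\log r-\Delta\log^2 r$ is correct, your Stokes computation is correct, and your claim that $2(n+1)\int_{r>\epsilon}\log r\,dv_g$ carries no $\epsilon^0\log\epsilon$ term is also correct. So you do reduce the problem to showing that the coefficient $\Theta_{n+1}$ in the Taylor expansion of $\int_{\{r=\epsilon\}}\th\wedge d\th^n$ equals a specific multiple of $\conj Q'$. But this is exactly the content of the theorem, and you do not prove it: the paragraph beginning ``This last identification is where the ambient construction re-enters'' only gestures at Lemma~\ref{CRGJMSlem} and its $P'$-analogue without saying which function is being harmonically extended or how $Q'$ enters. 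Applying the $P'$-lemma with $f=1$ gives only $P'1=Q=0$; applying it with any other pluriharmonic $f$ does not produce $Q'$. In fact your $\Theta_{n+1}$ is (via $\omega^{n+1}=d(\beta\wedge\omega^n)$ and Stokes) essentially the finite part of $(n+1)!\int_{r>\epsilon}dv_g$, i.e.\ a renormalized-volume term, and there is no a priori reason this equals $\conj Q'$.

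The paper closes this gap by working not with $\log^2 r$ but with $\log^2 h_\th=(\log\rs-\log r)^2$ on the ambient space. The three pieces $\rs\wt\Delta\log^2\rs$, $\rs\wt\Delta(\log\rs\cdot\log r)$, $\rs\wt\Delta\log^2 r$ are computed separately; your identity is precisely the third of these. The point is that in the sum all $\log\rs$ and $\log r$ terms cancel, leaving the clean formula
\[
\rs\,\wt\Delta\log^2 h_\th \;=\; 2-\|d\log r\|^2_g.
\]
Since by definition $Q'$ is the obstruction in $\wt\Delta(\log^2 h_\th+F+G\,\rs^{n+1}\log r)=0$, this rewrites as
\[
\Delta\big(F+G'r^{n+1}\log r\big)=\|d\log r\|^2-2,
\qquad G'|_M=\tfrac{(-1)^{n+1}}{(n+1)!\,n!}(Q')_\th,
\]
and now a single application of Green's formula over $\{\rho>\epsilon\}$ reads off $c_n\conj Q'$ as the $\log\epsilon$ coefficient. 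The cancellation of the $\log r$ terms is what makes the link to $Q'$ direct and is the step your argument is missing; without it one is left trying to identify a volume-expansion coefficient with $Q'$ by hand.
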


In \eqref{Qplog}, the choice of $\rho$ is arbitrary. In fact, one can easily see that the leading log term in $\epsilon$ is independent of $\rho$ by writing $D$ as a product $M\times (0,\epsilon_0)$ near the boundary; see \cite[Prop. 4.1]{S}.

\begin{proof}
By the standard argument of GJMS construction, as in Lemma \ref{CRGJMSlem}, we can characterize the $Q$-prime curvature as the log term coefficient of
\begin{equation}\label{qpeq}
\wt\Delta\big(\log^2 h_\th+F+G r_\sharp^{n+1}\log r \big)=0, 
\end{equation}
where $F\in\wt\calE(0)$ with $F=O(\rho)$ and $G\in\wt\calE(-n-1)$.
$Q'$ is given by the leading term of the logarithmic singularity:
$$
G|_\calN=\frac{(-1)^{n+1}}{(n+1)!n!}Q'\in\calE(-n-1).
$$
We first compute $\wt\Delta\log^2 h_\th$.
Since $\log h_\th=\log r_\sharp-\log r$, we have
$$
\log^2 h_\th=\log^2\rs-2\log r\cdot\log\rs+\log^2 r.
$$
Applying $\rs\wt\Delta$ to each term on the right-hand side, we have
$$
\begin{aligned}
\rs\wt\Delta\log^2\rs&=\rs\wt\nabla^I\left(2{Z_I\rs^{-1}\log\rs}\right)\\
&=
2{(n+2)}\log\rs
+{2Z_IZ^I}\cdot{\rs^{-1}}\log\rs
{-2Z_IZ^I}\cdot{\rs^{-1}}\\
&=2(n+1)\log\rs+2
\end{aligned}
$$
and
$$
\begin{aligned}
\rs\wt\Delta(\log\rs\cdot\log r)&
=\rs\wt\nabla^I\left({Z_I}\rs^{-1}\log r
-\log r_\sharp\wt\nabla_I\log r\right)\\
&={(n+1)}\log r
+(Z+\conj Z)\log r+\log\rs \cdot\Delta \log r\\
&={(n+1)}(\log r+\log\rs).
\end{aligned}
$$
Here we have used $Z\log r=0$ and $\rs\wt\Delta\log r=\Delta\log r= n+1$.  The last term gives
$$
\begin{aligned}
\rs\wt\Delta\log^2r
&=\Delta\log^2 r=-\nabla^i(2\log r\cdot \nabla_i\log r)\\
&=-\|d\log r\|^2+2(n+1)\log r.
\end{aligned}
$$
In the sum of these three terms, 
the coefficients of $\log\rs$ and $\log r$ both cancel out and we get 
$$
\rs\wt\Delta\log^2h_\th=2-\|d\log r\|^2_g.
$$
Thus
we may write \eqref{qpeq} as
\begin{equation}\label{poincareQeq}
\Delta(F+G'r^{n+1}\log r)=\|d\log r\|^2-2,
\end{equation}
where $G'$ is chosen as in Lemma \ref{divlem} (ii). 
We integrate each side of this formula on the subdomain $\rho>\epsilon$.
The left-hand side gives
$$
\begin{aligned}
\lp\int_{\rho>\epsilon}\Delta(F+G'r^{n+1}\log r)dv_g
&=
\lp\int_{\rho=\epsilon}\rho^{-n-1}\nu(F+G'r^{n+1}\log r)d\sigma
\\
&=\frac{(-1)^{n}}{(n!)^3}\int_{M}Q'_\th{\th\wedge d\th^n}.
\end{aligned}
$$
While the right-hand side is
$$
\operatorname{lp}\int_{\rho>\epsilon}(\|d\log\rho\|^2-2)dv_g=
\operatorname{lp}\int_{\rho>\epsilon}\|d\log\rho\|^2 dv_g.
$$
Thus we obtain the first equality of \eqref{Qplog}.
Here we have used 
\begin{equation*}\label{lpomega}
\lp\int_{\rho>\epsilon}\omega^{n+1}=\lp\int_{\rho=\epsilon}
-i\conj\pa\log r\wedge\omega^n=0.
\end{equation*}
We can also use this formula to derive the second equality of \eqref{Qplog} since the volume form $dv_g=
\omega^{n+1}/(n+1)!$ has decomposition:
$$
\begin{aligned}
\omega^{n+1}&=(n+1)i\frac{\pa r\wedge\conj\pa r}{r^2}\wedge\omega^{n}
+\left(\frac{\pa\conj\pa r}{i r}\right)^{n+1}
\\
&=
\|\pa\log r\|^2\omega^{n+1}+\left(\frac{\pa\conj\pa r}{ir}\right)^{n+1}.\qedhere
\end{aligned}
$$
\end{proof}

\subsection{Proof of Proposition \ref{psudoEinsteinprop}}
We first introduce a $\wedge^2T^*M$-valued pseudo-hermitian invariant that unifies the tensors given in the condition (2).  Using local coordinates $(z^0,z)$, we write $\rs=|z^0|^2r(z)$ and set $\th_0=\frac{i}{2}(\pa-\conj\pa)r|_{TM}$,
which satisfies (3) by Lemma \ref{divlem} (i).

 \begin{lem} \label{S-lem}
{\rm (i)} 
For a contact form $\th$, there exists a defining function $\rho$ normalized by 
$\th$ such that
\begin{equation}\label{h-th-norm}
\wt\Delta\log h_\th=O(\rho),\quad \text{where}\quad
h_\th=\rs/\rho.
\end{equation}
Such an $h_\th$ is unique modulo $O(\rho^2)$ and hence
$$
\mathbf{S}(\th):=-\iota^*(i\pa\conj\pa\log h_\th)
$$
is formally determined by $\th$. 

\medskip
\noindent
{\rm (ii)}  If $\wh\th=e^\up\th$, $\up\in\calE$, then
$
\bfS(\wh\th\,)=\bfS(\th)+\bfP \up.
$

\medskip
\noindent
{\rm (iii)} 
In terms of the Tanaka-Webster connection, one has
$$
-(n+2)\mathbf{S}(\th)=S_{\alpha\conj\beta}\th^\alpha\wedge\th^{\conj\beta}
+S_{\alpha}\th^\alpha\wedge\th+
S_{\conj\beta}\th^{\conj\beta}\wedge\th,
$$
where
$$
\begin{aligned}
S_{\alpha\conj\beta}&=i\tf \Ric_{\alpha\conj\beta},
\\
S_{\alpha}&=({1}/{n})\Scal_{,\alpha}-i\,A_{\alpha\beta,}{}^{\beta}.
\end{aligned}
$$
Moreover, the following relation holds:
\begin{equation}\label{Sdiv}
 S_{\alpha\conj\beta,}{}^{\conj\beta}=i(n-1)S_\alpha.
 \end{equation}
\end{lem}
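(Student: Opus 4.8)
The plan is to treat the three parts separately: \emph{(i)} is a one-step instance of the GJMS-type iteration of Lemma~\ref{CRGJMSlem}, \emph{(ii)} is immediate from the definitions, and \emph{(iii)} is the substantive curvature computation.

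For \emph{(i)} I would start from any defining function $\rho$ normalized by $\th$ and set $h_\th=\rs/\rho$. In a local trivialization $\rs=|z^0|^2r(z)$ one has $\log h_\th=\log|z^0|^2+\log(r/\rho)$, and since $\log|z^0|^2$ is pluriharmonic, hence $\wt g$-harmonic, while $\rs\wt\Delta=\Delta_g$ on $\wt\calE(0)$, the identities $\rs\wt\Delta\log\rs=n+1$ and $\rs\wt\Delta\log\rho=n+1+O(\rho)$ give $\wt\Delta\log h_\th\in\wt\calE(-1)$ with $\wt\Delta\log h_\th=O(1)$. Replacing $\rho$ by a defining function in the same $\th$-normalization class (thus differing by $O(\rho^2)$) changes $\log h_\th$ by a term $\rs\phi+O(\rho^2)$ with $\phi\in\wt\calE(-1)$ free on $\calN$, and \eqref{commutator} gives $\wt\Delta(\rs\phi)=n\phi+O(\rho)$, so one solves away the boundary value of $\wt\Delta\log h_\th$ with the nonzero coefficient $n$; this produces $\wt\Delta\log h_\th=O(\rho)$ and, by the same computation, pins down $h_\th$ modulo $O(\rho^2)$. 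Then $\bfS(\th)=-\iota^*(i\pa\conj\pa\log h_\th)$ is well defined exactly as in the proof of Lemma~\ref{Pformop}(i): changing $\log h_\th$ by $O(\rho^2)$ alters $i\pa\conj\pa\log h_\th$ by $\phi\,\pa\rs\wedge\conj\pa\rs+O(\rho)$, which pulls back to $\phi\,\bth\wedge\bth=0$ on $M$. For \emph{(ii)}, extend $\up$ to $\wt\up\in\wt\calE(0)$ with $\wt\Delta\wt\up=O(\rho)$ (Lemma~\ref{CRGJMSlem}); then $\wh\rho=e^{\wt\up}\rho$ is normalized by $\wh\th$, $h_{\wh\th}=\rs/\wh\rho=e^{-\wt\up}h_\th$ still satisfies $\wt\Delta\log h_{\wh\th}=O(\rho)$, and $\log h_{\wh\th}=\log h_\th-\wt\up$ gives $\bfS(\wh\th)=\bfS(\th)+\iota^*(i\pa\conj\pa\wt\up)=\bfS(\th)+\bfP\up$ by the definition of $\bfP$.

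For \emph{(iii)} I would compute $\iota^*(i\pa\conj\pa\log h_\th)$ from the Monge--Amp\`ere data. Writing $\log h_\th=\log|z^0|^2+w$ with $w=\log(r/\rho)\in\wt\calE(0)$, one has $\bfS(\th)=-\iota^*(i\pa\conj\pa w)$. Work locally in holomorphic coordinates $z$ for which $\th$ is volume-normalized with respect to $dz$, so $\calJ_z[\rho]=1+O(\rho)$ by Lemma~\ref{divlem}(i). From $r=e^w\rho$ and the transformation rule $\calJ_z[e^w\rho]=e^{(n+2)w}\calJ_z[\rho]\det\!\big(I-(g^\rho)^{-1}\pa\conj\pa w\big)$, taking $-i\pa\conj\pa\log$ and using the Einstein equation $-i\pa\conj\pa\log\calJ_z[r]=O_+(\rho^{n})$ gives
$$-(n+2)\,i\pa\conj\pa w=-i\pa\conj\pa\log\calJ_z[\rho]-i\pa\conj\pa\log\det\!\big(I-(g^\rho)^{-1}\pa\conj\pa w\big)+O_+(\rho^{n}).$$
Since $\calJ_z[\rho]=1+O(\rho)$ and $(g^\rho)^{-1}\pa\conj\pa w=O(\rho)$ are smooth up to $M$, restricting to $M$ kills the $O_+(\rho^{n})$ term and expresses $-(n+2)\bfS(\th)$ through the restrictions of $i\pa\conj\pa\log\calJ_z[\rho]$ and of the $\det$-correction. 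Inserting the expansion of $\log\calJ_z[\rho]$ for a normalized defining function in terms of the Tanaka--Webster connection, curvature and torsion (following Farris~\cite{Fa} and Lee~\cite{L2}) and collecting the transverse-curvature terms then yields $S_{\alpha\conj\beta}=i\tf\Ric_{\alpha\conj\beta}$ and $S_\alpha=\tfrac1n\Scal_{,\alpha}-iA_{\alpha\beta,}{}^{\beta}$ with the universal constant $-(n+2)$. An alternative is to note $\bfS(\th_0)=0$ for the contact form $\th_0$ induced by a solution of $\calJ_z[\rho_0]=1+O(\rho^{n+2})$ — then $\rs=|z^0|^2\rho_0$ is admissible and $h_{\th_0}=|z^0|^2$ — and to deduce (iii) for $\th=e^\up\th_0$ from part (ii), the formula for $\bfP$ in Lemma~\ref{Pformop}(ii), and the transformation laws of $\Ric_{\alpha\conj\beta},\Scal,A_{\alpha\beta}$. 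Finally \eqref{Sdiv} follows exactly as \eqref{Pdiv}: $i\pa\conj\pa\log h_\th=i\pa\conj\pa w$ is $d$-exact, so $\bfS(\th)$ is closed, and applying $d$ to the displayed formula for $-(n+2)\bfS(\th)$ and reading off the type-$(2,1)$ part gives the analogue of \eqref{Pdiv}, which contracts with $\bh^{\gamma\conj\beta}$ to \eqref{Sdiv}.

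The main obstacle is the curvature bookkeeping in (iii): one must keep track of the several transverse-curvature contributions from $\iota^*(i\pa\conj\pa\log\calJ_z[\rho])$ and from the $\det$-correction, and verify that all trace parts cancel so that precisely the trace-free Ricci appears in the $\th^\alpha\wedge\th^{\conj\beta}$-component and the torsion divergence in the $\th^\alpha\wedge\th$-component. The reduction to a pseudo-Einstein reference contact form circumvents most of this, at the cost of citing Lee's transformation formula for the pseudo-hermitian Ricci tensor.
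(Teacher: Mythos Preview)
Your proposal is correct, and for parts (i), (ii), and the derivation of \eqref{Sdiv} it coincides with the paper's argument almost verbatim (one GJMS step to normalize $h_\th$, then $\log h_{\wh\th}=\log h_\th-\up$ for (ii), and $d\bfS(\th)=0$ for \eqref{Sdiv}).

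For (iii) there is a genuine difference of emphasis. The paper does \emph{not} attempt your primary Monge--Amp\`ere route at all; it goes straight to what you list as the ``alternative'': pick $\th_0$ coming from a Fefferman solution so that $h_{\th_0}=|z^0|^2$ and $\bfS(\th_0)=0$, write $\th=e^\up\th_0$, and read off $\bfS(\th)=\bfP\up$ from (ii). Then for $n>1$ the paper identifies $S_{\alpha\conj\beta}$ by matching $P_{\alpha\conj\beta}\up=i\tf\up_{\alpha\conj\beta}$ against Lee's transformation law for $\tf\Ric_{\alpha\conj\beta}$, and determines $S_\alpha$ \emph{a posteriori} from the divergence identity \eqref{Sdiv} rather than from a separate transformation law; for $n=1$ it matches $P_1\up$ against the transformation of $\Scal_{,1}-iA_{11,}{}^1$. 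Your direct approach via $\calJ_z[e^w\rho]=e^{(n+2)w}\calJ_z[\rho]\det(I-(g^\rho)^{-1}\pa\conj\pa w)$ is valid and more self-contained in principle, but as you note it pushes the bookkeeping into the Farris--Lee expansion of $\log\calJ_z[\rho]$; the paper's route trades that for a single citation of Lee's Ricci transformation formula and is considerably shorter.
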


\begin{proof}
(i) If $\rho=e^\up r$, we get
$\log h_\th=\log |z^0|^2-\up$ and thus
$\wt\Delta \log h_\th=O(\rho)$
is reduced to the equation $\wt\Delta\up=-\wt\Delta \log h_\th+O(\rho)$ for $\up$.
Then we can apply the argument for defining $\bfP$ in Lemma \ref{Pformop} to this case.

(ii) 
If $\wh\th=e^\up\th$, then extending $\up$ off $\calN$ so that $\wt\Delta\up=O(\rho)$, we have $\log h_{\wh\th}=\log h_{\th}-\up$.
Applying $-\pa\conj\pa$ gives the transformation rule of $\bfS$.

(iii) Let $\th=e^\up\th_0$.  Since $\pa\conj\pa\log h_{\th_0}=\pa\conj\pa\log |z^0|^2=0$, we have $\bfS(\th_0)=0$. Thus the transformation law gives $\bfS(\th)=\bfP\up.$
If $n>1$, comparing $P_{\alpha\conj\beta}\up$ with the transformation law of $\tf\Ric$, we get $S_{\alpha\conj\beta}$.  Then $S_\alpha$ can be determined by $S_{\alpha\conj\beta,}{}^{\conj\beta}=i(n-1)S_\alpha$, which follows from $d\bfS(\th)=0$.
 If $n=1$, we have $S_{1\conj 1}=0$ and $S_1$ is determined by comparing $P_1\up$ with the transformation law of $\Scal_{,1}-iA_{11,}{}^{1}$.
\end{proof}

If $\log h_\th|_\calN$ is CR pluriharmonic, then we may choose $\rho$ so that $\log h_\th$ is pluriharmonic on the pseudoconvex side of $\calN$. Then we have $\pa\conj\pa\log h_\th=0$ and thus $\bfS(\th)=0$, which is equivalent to (2).
Conversely, we assume (2), or $\bfS(\th)=0$. Then writing $\th=e^\up\th_0$ as above, we have
$
\bfP\up=\bfS(\th)-\bfS(\th_0)=0,
$
which yields $\up\in\calP$.  Therefore $\log h_\th=\log |z^0|^2-\up$ is CR pluriharmonic on $\calN$.

\section{Three dimensional case}\label{three-dim}

We compute $P'$ and $Q'$ on 3-dimensional CR manifolds in terms of the Tanaka-Webster connection and use the formulas to derive the equivalence of $\conj Q'$ and the Burns-Epstein invariant.

\subsection{Pseudo-hermitian invariants}
We can greatly simplify the calculation by using invariant theory.  We give here some terminology needed for the formulation. 

 A {\em pseudo-hermitian invariant $I(\th) $ of weight $w$} is a polynomial in the components of Tanaka-Webster curvature, torsion and their iterated covariant derivatives that is
\begin{enumerate}
\item independent of the choice of admissible coframe; 
\item satisfies
$I(e^c\th)=e^{wc}I(\th)$ for any $c\in\bR$. 
\end{enumerate}
In this case, we write $I(\th) \in\calE(w)$. This formulation is used in \cite{BGS}
for the description of the heat kernel on CR manifolds.
If a pseudo-hermitian invariant $I(\th) \in\calE(w)$ satisfies
$
I(e^\up\th)=e^{w\up}I(\th)$ for any  $\up\in\calE,$
then $I(\th)$ is called a {\em CR invariant} of weight $w$.  This is a CR analoge of the conformal invariants and has been studied in \cite{F2}, \cite{Gr1}, \cite{BEG}, \cite{H2} to describe the Bergman kernel.
Note that the $Q$-curvature is a pseudo-hermitian invariant in $\calE(-n-1)$
but is not a CR invariant.

Since the choice of admissible coframe at a point has ambiguity of $U(n)$-action, we can see $I(\th)$ as a $U(n)$-invariant polynomial of the components of tensors.
Here, in view of the relation $[Z_\alpha,Z_{\conj\beta}]=-i\bh_{\alpha\conj\beta}T$, we may assume that the index $0$ does not appear in the polynomial. 
It follows from Weyl's invariant theory that $I(\th)$ is a linear combination of complete contractions of the form
\begin{equation}\label{contraction}
\contr(R^{p_1,q_1}\otimes\cdots \otimes R^{p_s,q_s}\otimes
T^{p'_1,q'_1}\otimes\cdots \otimes T^{p'_t,q'_t}),
\end{equation}
where $R^{p,q}=\nabla^{p-2,q-2}R\in\calE_{\alpha_1\dots\alpha_p\conj\beta_1\dots\conj\beta_q}(1)$ and $T^{p,q}\in\calE_{\alpha_1\dots\alpha_p\conj\beta_1\dots\conj\beta_q}$ is either $A^{p,q}=\nabla^{p-2,q}A$ or
$\conj{A}{}^{p,q}=\conj {A^{q,p}}$. The contraction is taken with respect to $\bh^{\alpha\conj\beta}$ for some pairing of holomorphic and anti-holomorphic indices.
Since a contraction changes the weight by $(-1,-1)$, the weight condition forces
$$
\sum_{j=1}^s (p_j-1)+\sum_{j=1}^t p'_j=
\sum_{j=1}^s (q_j-1)+\sum_{j=1}^t q'_j=-w.
$$

\subsection{Explicit formula of $Q'$}

We will prove \eqref{Q4form}  by using the properties given in the previous section.
Our first task is to list up all possible pseudo-hermitian invariants in volume densities.

\begin{lem}\label{lemQp}
Let $I(\th)\in\calE(-2)$ be a psudo-hermitian invariant on $M^3$.  Then there exist universal constants $c_1,c_2,c_3,c_4,c_5$ such that
\begin{equation}\label{fiveterms}
I(\th)=c_1\Delta_b \Scal+c_2 \Scal^2+c_3|A|^2+c_4 \Scal_{0}+c_5 Q,
\end{equation}
where $6Q=\Delta_b\Scal-2\Im A_{11,}{}^{11}$ is the $Q$-curvature.
\end{lem}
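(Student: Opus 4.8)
The plan is to carry out the invariant-theoretic argument outlined in \S6.1. By Weyl's theory, $I(\th)$ is a linear combination of complete contractions of the form \eqref{contraction}, so it suffices to list all such contractions of weight $-2$ on a $3$-dimensional $M$ and to check that each of them lies in the linear span of the five displayed densities.

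First I would use the low dimension to shrink the pool of building blocks. For $n=1$ every tensor index is $1$ or $\conj 1$, and the Tanaka--Webster curvature has a single independent component: from the symmetries of $R_{\alpha\conj\beta\gamma\conj\sigma}$ and $\Scal=\Ric_\alpha{}^\alpha$ one sees that $R_{1\conj 1 1\conj 1}$ is a constant multiple of $\Scal\,\bh_{1\conj 1}\bh_{1\conj 1}$, and since $\nabla\bh=0$ the same holds, with $\Scal$ replaced by an iterated covariant derivative of $\Scal$, for every $R^{p,q}$. Absorbing these parallel metric factors into the contraction pattern (and cancelling them against the $\bh^{1\conj 1}$ of the contraction), $I(\th)$ becomes a linear combination of complete $\bh^{1\conj 1}$-contractions of iterated $\nabla_1,\nabla_{\conj 1}$-derivatives (recall from \S6.1 that the index $0$ can be avoided) of $\Scal$, $A_{11}$ and $\conj A_{\conj 1\conj 1}$.

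The core of the argument is then a weight count. Suppose such a contraction involves $s$ factors that are covariant derivatives of $\Scal$, of orders $k_1,\dots,k_s$, and $a$ factors that are covariant derivatives of $A_{11}$ or $\conj A_{\conj 1\conj 1}$, of orders $l_1,\dots,l_a$. Since $\Scal\in\calE(-1)$, while $A_{11}$ and $\conj A_{\conj 1\conj 1}$ have weight $0$ and $\bh^{1\conj 1}\in\calE^{1\conj 1}(-1)$, and the number of contracted pairs equals $\tfrac12\left(\sum_{i}k_i+\sum_{j}l_j\right)+a$, membership of $I(\th)$ in $\calE(-2)$ forces
$$
s+\tfrac12\left(\sum_{i}k_i+\sum_{j}l_j\right)+a=2 .
$$
Combined with the balance condition that the numbers of holomorphic and anti-holomorphic indices agree, this leaves exactly: (i) $s=2$ with no derivatives, giving $\Scal^2$; (ii) $s=1$ with one second-order factor $\nabla_1\nabla_{\conj 1}\Scal$ or $\nabla_{\conj 1}\nabla_1\Scal$, whose contractions span $\Delta_b\Scal$ and, through \eqref{commf}, $\Scal_0$; (iii) $a=2$ with no derivatives, where balance forces one $A_{11}$ and one $\conj A_{\conj 1\conj 1}$, giving $|A|^2$; and (iv) $a=1$ with one second-order factor, where balance forces $A_{11,\conj 1\conj 1}$ (or its conjugate), contracting to $A_{11,}{}^{11}$ and $\conj{A_{11,}{}^{11}}$. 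The residual case $s=a=1$ is ruled out by the balance condition.

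Finally I would fold case (iv) into the list via the Bianchi identity \eqref{BianchiA}, which for $n=1$ reads $A_{11,}{}^{11}+\conj{A_{11,}{}^{11}}=\Scal_0$; hence $\Re A_{11,}{}^{11}=\tfrac12\Scal_0$ is not new, while $\Im A_{11,}{}^{11}=\tfrac12(\Delta_b\Scal-6Q)$ by the definition of $Q$. Consequently every contracting term, and therefore $I(\th)$, lies in the span of $\Delta_b\Scal,\Scal^2,|A|^2,\Scal_0,Q$ (with real constants when $I(\th)$ is real-valued, which is the case in our applications). The main obstacle, though a comparatively mild one, is precisely this enumeration: one has to be certain that no admissible index placement has been overlooked and that reordering covariant derivatives into canonical form never escapes the weight-$(-2)$ budget --- the latter being automatic, since the displayed identity is preserved by the commutation relations \eqref{commf} and \eqref{BianchiA}.
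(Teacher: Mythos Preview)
Your proposal is correct and follows essentially the same route as the paper's proof: enumerate the admissible complete contractions of weight $-2$ (the paper simply lists the six terms $\Scal_{1}{}^1$, $\Scal^{1}{}_1$, $A_{11,}{}^{11}$, $A_{\conj 1\conj 1,}{}^{\conj 1\conj 1}$, $\Scal^2$, $|A|^2$ without spelling out the weight/balance count you give), then reduce to the five displayed densities via the commutator identity $2\Scal_1{}^1=-\Delta_b\Scal+i\Scal_0$ and the relation $2A_{11,}{}^{11}=i\Delta_b\Scal+\Scal_0-6iQ$, which is exactly your combination of \eqref{BianchiA} with the definition of $Q$.
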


\begin{proof}
We first consider an expression of $I(\th)$ that does not contain the index $0$.
 By the weight condition, we see that $I(\th)$ is a linear combination of the following
  6 terms:
$$
\Scal_{1}{}^1,\ \Scal_{}{}^1{}_1,\ A_{11,}{}^{11}, \ A_{\conj{1}\conj{1},}{}^{\conj{1}\conj{1}}, \  \Scal^2, \ |A|^2.
$$
On the other hand, \eqref{commf}, \eqref{BianchiA} and the definition of $Q$ give
$$
\begin{aligned}
2A_{11,}{}^{11}&=i\Delta_b\Scal+\Scal_0-6i\,Q\\
2\Scal_{1}{}^1&=-\Delta_b \Scal+i\Scal_{0}.
\end{aligned}
$$
Thus we may replace the first 4 terms in the list by the 3 terms on the right-hand sides.
It leaves the 5 terms in \eqref{fiveterms}.
\end{proof}

Since we consider pseudo-Einstein contact forms, we may omit $Q$ and write
$$
Q'=c_1\Delta_b \Scal+c_2 \Scal^2+c_3|A|^2+c_4 \Scal_{0}.
$$
Our remaining task is to identify the universal constants.
We can do this just by considering $Q'$ on a flat CR manifold, for which the ambient metric is explicitly given.
This approach has been used in our computation of the $Q$-curvature in \cite{H1}.

Let $M_0$ be the real hypersurface in $\bC^2$ given by the defining function
$$
\rho_0=w+\conj w-|z|^2.
$$
We write $w=s+it$ and use $(z,t)\in\bC\times\bR$ as coordinates of the surface.
Then $\rho_0$ gives the contact form
$$
\th_0=(i/2)(\pa-\conj\pa)\rho=-dt-(i/2)\conj zdz+(i/2)zd\conj z
$$
and we may choose $dz$ as an admissible coframe.  The dual frame of $\th_0$, $dz$, $d\conj z$ is
$$
-\pa_t,\quad
Z_1=\pa_{z}+\conj{z}\pa_{w}=\pa_{z}-(i/2)\conj z\pa_t,\quad Z_{\conj{1}}=\conj{Z_1}.
$$
This frame is parallel with respect to the connection for $\th_0$.
Let
$$
z=\zeta^1/\zeta^0,\quad w=\zeta^2/\zeta^0
$$
and set $r_\sharp=\zeta^0\conj\zeta{}^2+\zeta^2\conj\zeta{}^0-
\zeta^1\conj\zeta{}^1$. Then the ambient K\"ahler Laplacian is 
$$
\wt\Delta=\pa_{0}\pa_{\conj 2}+\pa_{2}\pa_{\conj 0}-
\pa_{1}\pa_{\conj 1},\quad \text{where }\pa_I=\pa/\pa\zeta^I.
$$
We will use the frame $\pa_I$ and the coframe $d\zeta^I$ to define the components of tensors on the ambient space
so that the covariant derivatives $\wt\nabla_I$ agree with $\pa_I$.

We consider a pseudo-Einstein contact form given by the scaling $\th=e^\up\th_0$, where
$$
\up=f+\conj{f}, \quad f=a(z^2+z)+b(1+i)w\ \text{ for }a,b\in\bR.
$$ 
We will use $O(1)$ to denote terms that vanish at $(z,w)=(0,0)\in M_0$ or at
$e_0=(\zeta^I)=(1,0,0)\in\calN$. 

\begin{lem}
The $Q$-prime curvature of $\th=e^\up\th_0$ satisfies
\begin{equation}\label{Qpab}
Q'_\th=8(a^2+b^2)+O(1).
\end{equation}
\end{lem}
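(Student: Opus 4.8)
The plan is to evaluate $Q'$ directly from its definition \eqref{defQp}, using that the ambient metric of the model $M_0$ is flat. Since $\up=f+\conj f$ with $f$ holomorphic, the function $\rho=e^{\up}\rho_0$ is a defining function normalized by $\th=e^{\up}\th_0$. Moreover $\rs:=|\zeta^0|^2\rho_0=\zeta^0\conj\zeta^2+\zeta^2\conj\zeta^0-\zeta^1\conj\zeta^1$ gives the flat Lorentz--K\"ahler metric $\wt g=-i\pa\conj\pa\rs$, whose Ricci tensor vanishes, so $\rs$ is an admissible ambient defining function and $\wt\Delta=\pa_{0}\pa_{\conj 2}+\pa_{2}\pa_{\conj 0}-\pa_{1}\pa_{\conj 1}$ has constant coefficients. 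With this $\rho$ one has $h_\th=\rs/\rho=|\zeta^0|^2e^{-\up}$, hence $\pa\conj\pa\log h_\th=0$ and
$$
u:=\log h_\th=\log|\zeta^0|^2-f-\conj f ,
$$
which is pluriharmonic on the ambient space. Therefore $Q'_\th$ is represented, in the $\th_0$-trivialization (which is legitimate since $\th_0$ is volume-normalized with respect to $dz^1\wedge dz^2$, cf.\ Lemma \ref{divlem}(i)), by the homogeneous ambient function $\bigl(\wt\Delta^2(u^2)\bigr)|_{\calN}$; because $\up$ vanishes at the origin, the $\th_0$- and $\th$-readings of the density agree there, so it suffices to evaluate this function at $e_0$.

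The heart of the argument is an algebraic reduction of $\wt\Delta^2(u^2)$. Writing $u_I=\pa_I u$, $u_{IK}=\pa_I\pa_K u$ and using both the pluriharmonicity $u_{I\conj J}=0$ and the constancy of $\wt g^{I\conj J}$, one computes in two steps
$$
\wt\Delta(u^2)=-2\,\wt g^{I\conj J}u_I u_{\conj J},\qquad
\wt\Delta^2(u^2)=2\,\wt g^{I\conj J}\wt g^{K\conj L}\,u_{IK}\,\conj{u_{JL}} ;
$$
the second identity holds because, for pluriharmonic $u$, every iterated derivative carrying at least one barred and one unbarred index vanishes, so no lower-order or curvature terms appear. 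In matrix form, with $H=(u_{IK})$ symmetric, $\mathbf g=(\wt g^{I\conj J})$ the (constant) inverse metric matrix, and $\conj H$ the entrywise conjugate, this reads $\wt\Delta^2(u^2)=2\operatorname{tr}(\mathbf g\,H\,\mathbf g\,\conj H)$. After this step the problem becomes pure linear algebra at the single point $e_0$.

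It remains to compute the holomorphic Hessian $H$ of $u$ at $e_0=(1,0,0)$. The term $\log|\zeta^0|^2$ contributes only $u_{00}=-1$; from $f=a(z^2+z)+b(1+i)w$ with $z=\zeta^1/\zeta^0$, $w=\zeta^2/\zeta^0$, the chain rule at $e_0$ gives $\pa_1\pa_1f=2a$, $\pa_0\pa_1f=-a$, $\pa_0\pa_2f=-b(1+i)$ and all other second derivatives zero, so
$$
H=\begin{pmatrix}-1 & a & b(1+i)\\ a & -2a & 0\\ b(1+i) & 0 & 0\end{pmatrix},
\qquad
\mathbf g=\begin{pmatrix}0&0&-1\\0&1&0\\-1&0&0\end{pmatrix}.
$$
Evaluating $2\operatorname{tr}(\mathbf g\,H\,\mathbf g\,\conj H)$ then yields $8(a^2+b^2)$, which is \eqref{Qpab} (the remaining terms being $O(1)$ by definition). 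The only genuinely delicate point is the bookkeeping in the first paragraph---matching the ambient function value at $e_0$ with the value of the density $Q'_\th$ at the origin, and getting the signs in $\mathbf g$ right; the Hessian computation and the final trace are mechanical.
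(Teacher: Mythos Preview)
Your proof is correct and follows essentially the same approach as the paper: evaluate $\wt\Delta^2(\log^2 h_\th)$ at $e_0$ using the flat ambient Laplacian, reduce via pluriharmonicity to the holomorphic Hessian data, and read off $8(a^2+b^2)$. The only cosmetic difference is that the paper first splits $\log^2 h_\th=2\bigl(|\log z^0|^2-2\Re(f\log\conj z^0)+|f|^2\bigr)$ and disposes of the first two pieces separately (so only $2\wt\Delta^2|f|^2=2F_{IJ}\conj F^{IJ}$ survives), whereas you keep $u$ intact and package the computation as the trace identity $\wt\Delta^2(u^2)=2\,\wt g^{I\conj J}\wt g^{K\conj L}u_{IK}\conj{u_{JL}}$; the extra $-1$ in your $H_{00}$ from $\log\zeta^0$ is annihilated by the off-diagonal structure of $\mathbf g$, which is exactly the paper's observation $\wt\Delta|\log z^0|^2=0$ in matrix form.
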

\begin{proof}
Since 
$h_\th=|z^0|^2e^{-\up}$,  we have, on $\calN$,
$$
\begin{aligned}
Q'&=\wt\Delta^2\log^2 h_\th\\
&=\wt\Delta^2(\log z^0-f+\log\conj z^0-\conj f)^2\\
&=2\wt\Delta^2(|\log z^0|^2-2\Re( f\log \conj z^0)+|f|^2).
\end{aligned}
$$
Clearly, $\wt\Delta|\log z^0|^2=0$ and  
$$
\wt\Delta^2( f\log \conj z^0)=
\pa_{IJ}f\cdot\pa^{IJ}\log \zeta^{\conj 0}=-(\zeta^{\conj 0})^{-2}\pa_{22} f=0.
$$
Thus we have 
$$
Q'=2\wt\Delta^2|f|^2=2\pa_{IJ}f\cdot\pa^{IJ}\conj f.
$$
The components $F_{IJ}=\pa_{IJ}f(e_0)$ vanish except for 
$$
F_{10}=F_{01}=-a,\quad F_{11}=2a,\quad F_{20}=F_{02}=-b(1+i)
$$
and their conjugates. So, at $e_0$, we have
$$
\wt\Delta^2|f|^2=F_{11}\conj{F_{11}}+2F_{20}\conj{F_{02}}
=4(a^2+b^2).
$$
This is the value at $(0,0)$ of the density $Q'$ identified with a function with respect to $\th_0$. However,
since $\up=O(1)$, the value does not change by the replacement of $\th_0$ by $\th=e^\up\th_0$.
\end{proof}

We next apply Proposition \ref{transf-prop} to evaluate 
$\Delta_b\Scal$, $\Scal^2$, $|A|^2$
and $\Scal_{0}$ at $(0,0)$.
We first compute the derivatives of $\up$:
$$
\begin{aligned}
\up_0&=2b\qquad
&&\up_1=a(2z+1)+b(1+i)\conj z=a+O(1)
\\
\up_{11}&=2a\qquad
&&\up_{1\conj 1}=\conj{\up_{\conj 1 1}}=
b(1+i).
\end{aligned}
$$
From the last formula, we get $\Delta_0\up=-2b$, where $\Delta_0$ be the sub-Laplacian for $\th_0$.
Proposition  \ref{transf-prop} then gives
$$
\begin{aligned}
\Scal&=2\Delta_0\up-2\up_1\up^1=-4b-2\big|a(2z+1)+b(1+i)\conj z\big|^2,
\\
A_{11}&=i\up_{11}-i(\up_1)^2=i(2a-a^2)+O(1),
\\
\Delta_b\Scal&=\Delta_0\Scal-(\Delta_0\up)\Scal+\up_1\Scal^1+\up^1\Scal_1\\
&=-8a^2(a-2)-8a^2b+O(1),
\\
\Scal_0&=i\up_1\Scal^1-i\up^1\Scal_1+\frac{-2}{3}(\up_0+\Im \up_1{}^1)\Scal=8b(a^2+b).
\end{aligned}
$$
Therefore, at $(z,w)=(0,0)$, 
\begin{equation}\label{trans-terms}
\begin{aligned}
\Delta_b \Scal&=-8a^2(a+b-2)
\qquad
&&\Scal^2=4(a^2+2b)^2
\\
\Scal_{0}&=8b(a^2+b)
\qquad
&&|A|^2=a^2(a-2)^2.
\end{aligned}
\end{equation}
Observe that 
the four polynomials on the right-hand sides are linearly independent and
$$
\Delta_b \Scal+\frac12\Scal^2-2|A|^2=8(a^2+b^2).
$$
Since the right-hand side agrees with $Q'$ given in \eqref{Qpab},
we obtain \eqref{Q4form}.

\subsection{Explicit formula of $P'$}\label{ss-explicit-pp}
Having found $Q'$, we can compute $P'$ by using the transformation rules
\eqref{trans-PP} and
\eqref{QPtrans}.

\begin{prop}
Let $\th$ be a pseudo-Einstein contact from on $M^3$. Then $P'$ can be extended to a self-adjoint  differential operator $D$ on $\calE$:
$$
Df=\Delta_b^2f-\Re(\Scal f_1-2iA_{11}f^1)^{,1}.
$$
\end{prop}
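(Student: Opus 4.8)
The plan is to verify the two claims of the statement directly. First, that the displayed right-hand side defines a differential operator $D\colon\calE\to\calE(-2)$ whose restriction to $\calP$ equals $P'$; this is in effect a re-derivation of \eqref{PP3}, which I would obtain from the formula \eqref{Q4form} for $Q'$ together with the transformation rule \eqref{QPtrans}. Second, that $D$ is formally self-adjoint.

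For the first claim I would argue by linearization along a pseudo-Einstein family. For $\up\in\calP$ and $t\in\bR$ the contact form $e^{t\up}\th$ is again pseudo-Einstein (Proposition \ref{transQP}(i)), so \eqref{QPtrans} gives $Q'(e^{t\up}\th)=Q'(\th)+2tP'\up+t^2P(\up^2)$; differentiating at $t=0$ yields $2P'\up=\frac{d}{dt}\big|_{t=0}Q'(e^{t\up}\th)$. The quadratic term drops out, so the explicit form of the GJMS operator $P=P_{n+1}$ is not needed; it suffices to linearize the right-hand side of \eqref{Q4form} in the direction $\up$ and match it with $2D\up$.

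Carrying this out, Proposition \ref{transf-prop}(i) (with $n=1$) gives $\dot{\Scal}=2\Delta_b\up$ and $\dot{A}_{11}=i\up_{11}$, and Proposition \ref{transf-prop}(iv) applied to the density $\Scal\in\calE(-1)$ gives the linearization of the operator $\Delta_b$ needed for the variation of $\Delta_b\Scal$. Collecting the contributions of $\Delta_b\Scal$, $\frac12\Scal^2$ and $-2|A|^2$ produces $2\Delta_b^2\up$ together with lower-order terms. On the other side, expanding $2D\up=2\Delta_b^2\up-2\Re\nabla^1(\Scal\,\up_1-2iA_{11}\up^1)$ by the Leibniz rule and simplifying with the commutator identity $\up_\alpha{}^\alpha+\up^\alpha{}_\alpha=-\Delta_b\up$ from \eqref{commf}, the two expressions agree up to a residual proportional to $\Re\big(\up^\alpha S_\alpha\big)$, where $S_\alpha=\frac1n\Scal_{,\alpha}-iA_{\alpha\beta,}{}^\beta$ is the tensor of Lemma \ref{S-lem}(iii). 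Since $\th$ is pseudo-Einstein, $S_\alpha=0$ (Proposition \ref{psudoEinsteinprop}, or equivalently $\bfS(\th)=0$ in Lemma \ref{S-lem}), so the residual vanishes and $P'\up=D\up$ for every $\up\in\calP$; this re-proves \eqref{PP3}. Note that it is the pseudo-Einstein condition on $\th$ alone, and not pluriharmonicity of $\up$, that effects the cancellation, so the same $D$ is a well-defined operator on all of $\calE$, not merely on $\calP$.

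Finally, $D$ is patently a differential operator $\calE\to\calE(-2)$ built polynomially from $\th$ and its Tanaka-Webster data. For formal self-adjointness, integrate by parts once, using that $\Delta_b$ is formally self-adjoint on densities: for real $f,g\in\calE$,
$$
\int_M g\,Df=\int_M(\Delta_b g)(\Delta_b f)+\Re\int_M(\nabla^1 g)\big(\Scal\,\nabla_1 f-2iA_{11}\nabla^1 f\big).
$$
The first integrand is symmetric in $f$ and $g$; in the second, $\Re\big[\Scal(\nabla^1 g)(\nabla_1 f)\big]$ is symmetric once one uses that $\bh$ is Hermitian and $\Scal,f,g$ are real, and $\Re\big[iA_{11}(\nabla^1 g)(\nabla^1 f)\big]$ is manifestly symmetric. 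Hence $\int_M g\,Df=\int_M f\,Dg$, so $D$ is formally self-adjoint; restricting to $\calP$ also recovers the $n=1$ case of Theorem \ref{selfP}. The main obstacle is the bookkeeping in the linearization step --- the repeated use of Proposition \ref{transf-prop}(iv) at the correct weight, the careful handling of raised and lowered torsion indices, and the commutator (and, if one routes $\Scal_{,\alpha}$ through its $T$-derivative, Bianchi \eqref{BianchiA}) manipulations needed to bring everything into the divergence form of $D$; the only conceptual point is that the $\up^\alpha\Scal_\alpha$ terms are absorbed by the torsion-derivative terms exactly under the pseudo-Einstein hypothesis.
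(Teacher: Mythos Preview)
Your proposal is correct and follows essentially the same route as the paper's own proof: linearize $Q'$ along the pseudo-Einstein family $e^{t\up}\th$ using \eqref{QPtrans} to identify $2P'\up$ with $\delta Q'$, compute $\delta Q'$ from \eqref{Q4form} via Proposition~\ref{transf-prop}, and match with $2D\up$ using the pseudo-Einstein relation $\Scal_{,1}=iA_{11,}{}^1$ (your $S_1=0$); the self-adjointness is then the same integration-by-parts argument the paper summarizes as ``the divergence formula and the Leibniz rule.'' The only cosmetic difference is that you package the residual as $\Re(\up^\alpha S_\alpha)$ and kill it with Lemma~\ref{S-lem}, whereas the paper invokes $\Scal_{,1}=iA_{11,}{}^1$ directly inside the divergence computation; these are the same identity.
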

\begin{proof}
For $\up\in\calP$, consider a family of contact forms $\th_\epsilon=e^{\epsilon\up}\th$.  Then, denoting by $\delta=(d/d\epsilon)|_{\epsilon=0}$, we have
from Proposition \ref{transf-prop} that
$$
\begin{aligned}
\delta(\Delta_b\Scal)&=2\Delta_b^2\up-\Scal\Delta_b\up+2\Re(\Scal_{,1}\up^1),
\\
\delta(\Scal^2)&=4\Scal\Delta_b\up,
\\
\delta(|A|^2)&=-2\Re(iA_{11}\up^{11}),
\end{aligned}
$$
and hence
$$
\delta Q'
=2\Delta_b^2\up+\Scal\Delta_b\up+2\Re(\Scal_{1}\up^1+2iA_{11}\up^{11}).
$$
On the other hand, using $\Scal_{1}=iA_{11,}{}^{1}$, we have
$$
\begin{aligned}
\Re\big( \Scal \up_1 -2i A_{11}\up^1
\big)^{,1}
 &=
\Re\big( \Scal^1 \up_1+\Scal \up_1{}^1\\
&\qquad\qquad
-2i A_{11,}{}^1\up^1
-2i A_{11}\up^{11}\big)\\
& =-\frac12 \Scal\Delta_b\up-
\Re\big(\Scal_1 \up^1+2i A_{11}\up^{11}\big).
\end{aligned}
$$
Thus we get $\delta Q'=2D\up$, which implies $P'=D$. 
The self-adjointness of $D$ follows from the divergence formula
$
\int_M \tau_{1,}{}^1=0
$
for $\tau_1 \in\calE_1(-1)$ and the Leibniz rule.
\end{proof}

We next derive the formula of $P'$ for general contact form by using the transformation rule
\eqref{trans-PP}.  We modify $D$ by adding two terms that vanish for $\th\in\calPE$:
$$
P_{\alpha,b}f=Df+\Re (\alpha S^1f_1)+b\,Qf,
$$
where $\alpha\in\bC$ and $b\in\bR$. 

\begin{lem}\label{transflemma}
For any scaling $\wh\th=e^\up\th$ and $f\in \calE$, one has
$$
\begin{aligned}
\wh P_{\alpha,b}f=&P_{\alpha,b}f+P(\up f)-\up Pf+(b-1)f P\up
\\
&+\Re \big[(2-3\alpha)f^1P_1\up-4\up^1 P_1f\big].
\end{aligned}
$$
\end{lem}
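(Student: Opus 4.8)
We regard $P_{\alpha,b}$ as a universal pseudo-hermitian differential operator $\calE\to\calE(-n-1)$, built from the fixed polynomial expression $D$, the invariant density $S_1$ of Lemma~\ref{S-lem}, and the $Q$--curvature; consequently $\wh P_{\alpha,b}f-P_{\alpha,b}f$ is again a universal bidifferential expression in $(\up,f)$ whose coefficients are polynomial in the curvature and torsion of $\th$. The plan is to compute it by assembling three transformation rules. First, the rescaling rules of Proposition~\ref{transf-prop} for $\Delta_b$, $\Scal$, $A_{11}$ and for $\wh\nabla$ acting on densities, together with $\wh\nabla_1 f=f_1$ since $f$ has weight $0$. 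Second, the transformation of $S_1$, obtained from $\bfS(\wh\th)=\bfS(\th)+\bfP\up$ (Lemma~\ref{S-lem}(ii)) and the expansion \eqref{bPform} by comparing the $\th^1\wedge\th$--components: since $S_{1\conj{1}}=P_{1\conj{1}}\up=0$ on a $3$--manifold, $\wh S_1-S_1$ is a fixed nonzero multiple of $P_1\up$. Third, the transformation of the $Q$--curvature: from $\log h_{\wh\th}=\log h_\th-\up$, the identity $Q=-\wt\Delta^{n+1}\log h_\th|_\calN$, and the definition $Pf=\wt\Delta^{n+1}\wf|_\calN$ of the GJMS operator $P=P_{n+1}$, one gets $\wh Q=Q+P\up$.

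The central and most laborious step is the substitution of the rules of Proposition~\ref{transf-prop} into $Df=\Delta_b^2f-\Re(\Scal f_1-2iA_{11}f^1){}^{,1}$. After iterating the rescaling of $\Delta_b$ from Proposition~\ref{transf-prop}(iv), commuting the resulting covariant derivatives by means of \eqref{commf} (which introduces $T$--derivatives and torsion terms), and simplifying with the contracted Bianchi identity $A_{11},{}^{11}+A_{\conj{1}\conj{1}},{}^{\conj{1}\conj{1}}=\Scal_0$ of \eqref{BianchiA}, one wants to organize $\wh Df$ in the form
\[
\wh Df=Df+\big(P(\up f)-\up Pf\big)-fP\up+\Re\!\big(c_1 f^1P_1\up\big)+\Re\!\big(c_2\up^1P_1f\big),
\]
where the divergence block $P(\up f)-\up Pf$ is identified by comparison with the explicit dimension--$3$ formula for the GJMS operator $P=P_{n+1}$, and the remaining two blocks arise by repackaging the leftover curvature--weighted first--derivative terms through $S_1=\Scal_{,1}-iA_{11,}{}^1$ and $P_1f=f_{\conj{1}}{}^{\conj{1}}{}_1+iA_{11}f^1$.

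Adding the two corrections: $\Re(\alpha\wh S^1\wh\nabla_1 f)-\Re(\alpha S^1 f_1)$ equals $\alpha$ times a fixed multiple of the $\Re(f^1P_1\up)$ block (by the second rule), and $b(\wh Q-Q)f=b(P\up)f$ (by the third rule). Combining with the display above, the $fP\up$ terms assemble into $(b-1)fP\up$; the $\Re(f^1P_1\up)$ contributions from $\wh Df$ and from the $S_1$--correction assemble into $\Re((2-3\alpha)f^1P_1\up)$; the $\Re(\up^1P_1f)$ term survives with coefficient $-4$; and every remaining term cancels. This yields the stated identity, with the constants $2$, $-3$ and $-4$ fixed by the computation.

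Two sanity checks will guard against sign errors. If $\th\in\calPE$, $\up\in\calP$ and $f\in\calP$, then $D=P'$ on $\calP$ (Proposition in \S\ref{ss-explicit-pp}), $S_1=\wh S_1=0$, $Q=\wh Q=0$, $P\up=P_1\up=0$ and $Pf=P_1f=0$ (since $\calP\subset\ker P$ and $\bfP$ annihilates $\calP$), so both sides collapse to $P'f+P(\up f)$, in agreement with Proposition~\ref{trans-PP-prop}; and setting $f=1$ both sides reduce to the single identity $b\wh Q=b(Q+P\up)$. The main obstacle is precisely the fourth--order computation of $\wh Df$: one has to commute three covariant derivatives repeatedly, and the many terms that are not manifestly of one of the five advertised shapes cancel only after \eqref{commf} and \eqref{BianchiA} are invoked; the delicate point is to make the cross--terms consolidate into the clean combinations $\Re((2-3\alpha)f^1P_1\up)$ and $-4\Re(\up^1P_1f)$ rather than an unwieldy mixture of $\Scal_{,1}$, $A_{11,}{}^1$ and third covariant derivatives of $\up$ and $f$ — and it is exactly to absorb these remainders that the $S_1$-- and $Q$--terms were built into $P_{\alpha,b}$.
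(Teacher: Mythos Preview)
Your approach is essentially the same as the paper's: compute $\wh D f-Df$ via Proposition~\ref{transf-prop}, reorganize using the commutator relations \eqref{commf} to introduce $P_1$, compare with an expansion of $P(\up f)$, and then add in the transformation rules $\wh S_1=S_1+3P_1\up$ and $\wh Q=Q+P\up$. The paper carries this out more explicitly: it first writes $\wh Df-Df$ as a single divergence $2\Re(2\up^1{}_1f_1+\up_1{}^1f_1+\up^1f_{11}-\up_1f_1{}^1)^{,1}$, then uses the identities $\up^1{}_1{}^1=P^1\up-i\up^1{}_0$, $\up_{11}{}^1=P_1\up+2i\up_{10}+\Scal\up_1$, $\up_1{}^{11}=P^1\up+iA^{11}\up_1$ to trade third derivatives for $P_1$, and separately expands $P(\up f)=(P^1f)_{,1}$ by the same trick before subtracting. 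One small inaccuracy in your outline: the repackaging of the leftover terms in $\wh Df$ uses only $P_1$ (and its conjugate $P^1$), not $S_1$; the combination $S_1=\Scal_{,1}-iA_{11,}{}^1$ enters only through the $\alpha$-correction term, not through $\wh Df$ itself.
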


\begin{proof}
A straightforward computation using Proposition \ref{transf-prop} gives
$$
\wh Df=Df+2\Re(2\up^1{}_1f_1+\up_1{}^1f_1+\up^1f_{11}-\up_1f_1{}^1)^{,1}.
$$
Expanding the second term gives 3rd order derivatives of $f$ and $\up$. Such derivatives can be 
replaced by $P_1$, $P^1$ and lower order terms by using
$$
\begin{aligned}
\up^1{}_1{}^1&=P^1\up-i\up^1{}_0\\
\up_{11}{}^1&=P_1\up+2i\up_{10}+\Scal\up_1\\
\up_1{}^{11}&=P^1\up+iA^{11}\up_1,
\end{aligned}
$$
which follow from $P_1\up=\up^1{}_{11}+iA_{11}\up^1$ and \eqref{commf}.
Thus we get
$$
\begin{aligned}
\wh D f&=Df+2\Re(2\up^1{}_1f_1+\up_1{}^1f_1+\up^1f_{11}-\up_1f_1{}^1)^{,1}
\\
&=Df+2\Re(
3 f_1P^1\up-2i\up^1{}_0f_1+2i\up^1f_{10}\\
&\qquad\qquad\qquad\ \ +2\up^1{}_1f_1{}^1+\up^{11}f_{11}+\Scal \up^1 f_1
).
\end{aligned}
$$
Applying the same method to $Pf=(P^1f)_{,1}$ gives
$$
\begin{aligned}
P(\up f)
&=
\up Pf+f P\up+
2\Re\big(
2\up^1P_1 f+2f^1P_1\up\\
&\qquad 
+2i\up_{10}f^1+2i\up^1f_{10}
+2\up_1{}^1 f^1{}_1+\up^{11}f_{11}+\Scal \up^1f_1
\big).
\end{aligned}
$$
Thus
$$
\wh D f=Df+P(\up f)-\up Pf-f P\up+2\Re\big(f_1P^1\up-2\up^1P_1f\big).
$$
Combining with the transformation rule of $S_1$ and $Q$, we get the lemma.
\end{proof}

By this lemma, we see that 
$
\wh P_{\alpha,b}f=P_{\alpha,b}f+ P(\up f)$ holds for $f\in\calP$
and $\up\in\calE$ if and only if $\alpha=2/3$ and $b=1$.
Therefore, we get
$$
P'f=Df+2/3\,\Re (S^1f_1)+Qf.
$$
We know that $P'$ is formally self-adjoint on $\calP$ if $M$ is the boundary of a strictly pseudoconvex domain in a Stein manifold.  It follows that the first order operator $\Re (S^1f_1)$ is also formally self-adjoint on $\calP$.  We can prove this fact directly by using $i\pa\conj\pa\log h_\th$ as follows:
Recall that
$
-3\iota^*(\pa\conj\pa\log h_\th)=iS_1\th^1\wedge\th+iS_{\conj 1}\th^{\conj 1}\wedge\th
$
and so
$$
3(\conj\pa-\pa)\varphi\wedge \pa\conj\pa\log h_\th
=2\Re(S^1 \varphi_1)\th\wedge d\th\quad\text{on }\ M.
$$
Taking pluriharmonic extensions of $\varphi$ and $\psi$, we get
$$
\begin{aligned}
2\int_M\psi\Re(S^1 \varphi_1)\th\wedge d\th
&=
3\int_Dd\big(\psi(\conj\pa-\pa)\varphi\wedge \pa\conj\pa\log h_\th\big)\\
&= 3
\int_D 
(\pa\psi\wedge\conj\pa \varphi+\pa\varphi\wedge\conj\pa\psi )\wedge \pa \conj\pa\log h_\th.
\end{aligned}
$$
The last integral is symmetric in $\varphi$ and $\psi$; thus
$\Re(S^1 \varphi_1)$ is formally self-adjoint on $\calP$. We remark that this proof does not use the Einstein K\"ahler metric on the domain.

\subsection{Burns-Epstein invariant} We start by recalling the definition of the invariant $\mu$ from Burns-Epstein \cite{BE1}.  Suppose that $M$ is compact and has trivial holomorphic tangent bundle $T^{1,0}$.
Then we may take a global admissible coframe $\th, \th^1,\th^{\conj 1}$ such that
$d\th=i\th^1\wedge\th^{\conj 1}$.
Using the globally defined connection form  $\omega_1{}^1$, we define a 3-form
$$
\wt T C=\frac{i}{8\pi^2}
\left[\Big(-\frac{2i}3 \,d\omega_1{}^1+\frac16\, d(\Scal\th)\Big)\wedge \omega_1{}^1-2|A|^2\th\wedge \th^{\conj 1}\wedge\th^{1}
\right].
$$
While the definition depends on a choice of coframe,
it is shown that the de Rham class of $\wt TC$ is determined by the CR structure and
$$
\mu=\int_M\wt T C
$$
gives a CR invariant, which is  called the {\em Burns-Epstein invariant.}
We can easily generalize the definition to the case where $c_1(T^{1,0})$ is a torsion, i.e. $c_1(T^{1,0})=0$ in $H^2(M,\bR)$. 
In this case, instead of taking a global coframe, we can choose local coframe such that the transition functions are given by constants.  Then $\omega_1{}^1$ does not depend on the choice of the frame and $\wt TC$ is well-defined.
Note that this condition holds if $M$ admits a pseudo-Einstein contact form;
see \cite[Prop.\ D]{L2}, the proof also holds for the case $n=1$.

We now assume that $M$ admits a pseudo-Einstein contact form.  Then there is a natural choice of frame so that $\wt{T}C$ agrees with a pseudo-hermitian invariant in $\calE(-2)$ for $\th\in\calPE$.
To formulate it, we give a characterization of pseudo-Einstein condition in terms of the connection form, which holds for all dimensions.

\begin{lem} A contact form $\th$ on  $M^{2n+1}$ is pseudo-Einstein if and only if  there exists an admissible local coframe $\th$, $\th^\alpha$, such that
\begin{equation}\label{normalframe}
h_{\alpha\conj\beta}=\delta_{\alpha\conj\beta}
\quad\text{and}\quad 
\omega_\alpha{}^\alpha+(i/n)\Scal\th=0.
\end{equation}
\end{lem}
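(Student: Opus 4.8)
The plan is to read the condition off from the characterization of pseudo-Einstein contact forms by closed sections of the canonical bundle (condition~(3) of Proposition~\ref{psudoEinsteinprop}), using the observation that, for an admissible coframe with $h_{\alpha\conj\beta}=\delta_{\alpha\conj\beta}$, the trace $\omega_\alpha{}^\alpha$ of the Tanaka--Webster connection form is precisely the connection form induced on $\calK_M$ in the local frame $\zeta_0:=\th\wedge\th^1\wedge\cdots\wedge\th^n$.

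The single computational ingredient is the identity $d\zeta_0=-\omega_\alpha{}^\alpha\wedge\zeta_0$, which I would verify directly from the structure equations: since $d\th$ has type $(1,1)$ in the coframe, $d\th\wedge(\th^1\wedge\cdots\wedge\th^n)=0$ and so $d\zeta_0=-\th\wedge d(\th^1\wedge\cdots\wedge\th^n)$; expanding with $d\th^\beta=\th^\gamma\wedge\omega_\gamma{}^\beta+A^\beta{}_{\conj\gamma}\,\th\wedge\th^{\conj\gamma}$, only the $\gamma=\beta$ parts of $\th^\gamma\wedge\omega_\gamma{}^\beta$ survive the wedge with $\th^1\wedge\cdots\wedge\widehat{\th^\beta}\wedge\cdots\wedge\th^n$, while the torsion part is annihilated by the outer $\th$, and the identity drops out. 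I also record two elementary facts: with $h_{\alpha\conj\beta}=\delta_{\alpha\conj\beta}$ the relation $\omega_{\alpha\conj\beta}+\conj{\omega_{\beta\conj\alpha}}=0$ forces $\omega_\alpha{}^\alpha$ to be purely imaginary; and $\th$ is automatically volume-normalized with respect to $\zeta_0$ (a short check against \eqref{th-normal} using orthonormality).

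For ``$\Leftarrow$'', suppose a coframe as in \eqref{normalframe} exists. Then $\omega_\alpha{}^\alpha\wedge\zeta_0=-(i/n)\Scal\,\th\wedge\zeta_0=0$, hence $d\zeta_0=0$; thus $\zeta_0$ is a closed section of $\calK_M$ with respect to which $\th$ is volume-normalized, and Proposition~\ref{psudoEinsteinprop} gives $\th\in\calPE$.

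For ``$\Rightarrow$'', suppose $\th\in\calPE$ and choose, locally, a closed section $\zeta$ of $\calK_M$ with respect to which $\th$ is volume-normalized. Since $\th$ is volume-normalized with respect to both $\zeta$ and $\zeta_0$, we have $\zeta=e^{i\psi}\zeta_0$ for a local real function $\psi$; the orthonormality-preserving gauge $Z_1\mapsto e^{-i\psi}Z_1$, $Z_\alpha\mapsto Z_\alpha$ for $\alpha\ge2$, replaces $\zeta_0$ by $\zeta$, so after this change $\zeta_0$ is closed and $\omega_\alpha{}^\alpha\wedge\zeta_0=-d\zeta_0=0$. A $1$-form annihilating $\zeta_0$ lies in the span of $\th,\th^1,\dots,\th^n$, and together with $\omega_\alpha{}^\alpha$ being purely imaginary this forces $\omega_\alpha{}^\alpha=p\,\th$ for a purely imaginary function $p$. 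The $(1,1)$-part of $d(p\,\th)=dp\wedge\th+p\,d\th$ is then $p\,d\th=ip\,h_{\mu\conj\nu}\,\th^\mu\wedge\th^{\conj\nu}$ (the term $dp\wedge\th$ has no $(1,1)$-component), while, since $\omega_\alpha{}^\gamma\wedge\omega_\gamma{}^\alpha=0$, the curvature identity gives $d\omega_\alpha{}^\alpha\equiv\Ric_{\mu\conj\nu}\,\th^\mu\wedge\th^{\conj\nu}$ modulo $\th$ and modulo $(2,0)$- and $(0,2)$-forms; equating the $(1,1)$-parts yields $\Ric_{\mu\conj\nu}=ip\,h_{\mu\conj\nu}$, whose trace gives $ip=\Scal/n$, i.e.\ $\omega_\alpha{}^\alpha=-(i/n)\Scal\,\th$ --- exactly \eqref{normalframe}. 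I expect no real obstacle here: the only substantive step is the identity $d\zeta_0=-\omega_\alpha{}^\alpha\wedge\zeta_0$, and the remainder is bookkeeping with coframes and gauges together with the already-proved Proposition~\ref{psudoEinsteinprop}.
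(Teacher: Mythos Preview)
Your proof is correct and follows essentially the same route as the paper: both arguments hinge on the identity $d\zeta_0=-\omega_\alpha{}^\alpha\wedge\zeta_0$ for $\zeta_0=\th\wedge\th^1\wedge\cdots\wedge\th^n$, invoke the closed-section characterization of pseudo-Einstein (Proposition~\ref{psudoEinsteinprop}(3)), absorb the phase $e^{i\psi}$ into the coframe, and then identify the coefficient of $\th$ in $\omega_\alpha{}^\alpha$ via the $(1,1)$-part of the traced curvature. Your write-up is in fact a bit more careful than the paper's about justifying the key steps (the derivation of $d\zeta_0$, why pure-imaginariness plus $\omega_\alpha{}^\alpha\wedge\zeta_0=0$ forces $\omega_\alpha{}^\alpha\in\bR i\,\th$, and the vanishing of $\omega_\alpha{}^\gamma\wedge\omega_\gamma{}^\alpha$).
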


\begin{proof} We follow the proof of \cite[Th.\ 4.2]{L2}.
We take an admissible coframe such that $h_{\alpha\conj\beta}=\delta_{\alpha\conj\beta}$; then $\th$ is normalized by
$$
\zeta=e^{i\psi}\th\wedge\th^1\wedge\cdots\wedge\th^{n+1}
$$
for any real function $\psi$. If $\th$ is pseudo-Einstein, we may choose $\psi$ so that $\zeta$ is closed. Then, replacing $\th^1$ by $e^{i\psi}\th^1$, we may assume that $\zeta=\th\wedge\th^1\wedge\cdots\wedge\th^{n+1}$ is closed, so that
$
0=d\zeta=-\omega_\alpha{}^\alpha\wedge\zeta
$.
Since $\omega_\alpha{}^\alpha$ is pure imaginary due to $dh_{\alpha\conj\beta}=0$, we see that
$
\omega_\alpha{}^\alpha+iu\th=0
$
for a real valued function $u$.  Applying $d$ to this formula gives
$$
0=d\omega_\alpha{}^\alpha+iud\th+idu\wedge\th=(\Ric_{\alpha\conj\beta}-u h_{\alpha\conj\beta})\th^\alpha\wedge\th^{\conj\beta}
\mod \th.
$$  Thus we get $\Scal-nu=0$ and hence \eqref{normalframe} follows. 
The converse follows from
$d\zeta=-\omega_\alpha{}^\alpha\wedge\zeta=0$
for $\zeta=\th\wedge\th^1\wedge\cdots\wedge\th^{n+1}$.
\end{proof}

Now we specialize the proposition to the case $n=1$.  Then the transition function between two admissible frame
$\th^1$ satisfying \eqref{normalframe} is given by a constant in $S^1$; hence $\omega_1{}^1$ is globally defined.
Substituting $\omega_1{}^1=-i\,\Scal\th$ and $\th^1\wedge\th^{\conj{1}}=-id\th$ into $\wt T C$ gives
$$
\wt T C
=\frac{-1}{16\pi^2}(\Scal^2-4|A|^2)\th\wedge d \th.
$$
We have thus proved 
\begin{thm}
On a compact pseudo-Einstein manifold $M^3$, one has
\begin{equation}\label{inv-int}
\int_M Q' =-8\pi^2\mu(M).
\end{equation}
\end{thm}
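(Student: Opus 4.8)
The plan is to obtain the identity as a direct consequence of the explicit formula \eqref{Q4form} for $Q'$ together with the reduction of the Chern--Simons form $\wt T C$ to $\Scal^2-4|A|^2$ carried out in the lines preceding the theorem. Since $M^3$ is compact and, by hypothesis, admits a pseudo-Einstein contact form, I would fix one $\th\in\calPE$ and an admissible coframe $\th,\th^1$ normalized by \eqref{normalframe}, and express both $\int_M Q'$ and $\mu(M)=\int_M\wt T C$ in terms of this fixed data, as multiples of the volume form $\th\wedge d\th$.

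First I would use \eqref{Q4form} to write the density $Q'\in\calE(-2)$ in its function representation with respect to $\th$ as $Q'_\th=\Delta_b\Scal+\tfrac12\Scal^2-2|A|^2$, and then observe that the first term contributes nothing to the integral. Indeed $\Delta_b\Scal=-\Scal_1{}^1-\Scal^1{}_1$, so, with $\Scal_1\in\calE_1(-1)$, the divergence formula $\int_M\tau_{1,}{}^1=0$ for $\tau_1\in\calE_1(-1)$ (used already in \S\ref{ss-explicit-pp}) together with its conjugate gives $\int_M(\Delta_b\Scal)\,\th\wedge d\th=0$, whence
$$
\int_M Q'=\int_M\Big(\tfrac12\Scal^2-2|A|^2\Big)\,\th\wedge d\th .
$$
On the other hand, in the normalized coframe $\omega_1{}^1=-i\,\Scal\,\th$ is globally defined and $\th^1\wedge\th^{\conj1}=-i\,d\th$, so substituting into the definition of $\wt T C$ yields, exactly as computed above,
$$
\wt T C=\frac{-1}{16\pi^2}\big(\Scal^2-4|A|^2\big)\,\th\wedge d\th
=\frac{-1}{8\pi^2}\Big(\tfrac12\Scal^2-2|A|^2\Big)\,\th\wedge d\th .
$$
Integrating and combining with the previous display gives $\mu(M)=\int_M\wt T C=-\tfrac1{8\pi^2}\int_M Q'$; since $\mu(M)$ is a CR invariant, in particular independent of these choices, this is \eqref{inv-int}.

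I do not expect a real obstacle: the two substantial ingredients --- the explicit formula \eqref{Q4form} and the reduction of $\wt T C$ under \eqref{normalframe} --- are already in place, and the theorem amounts to assembling them. The one point requiring (routine) care is the vanishing of $\int_M\Delta_b\Scal\,\th\wedge d\th$, which rests on the fact that $\Delta_b$ applied to a CR density is a genuine divergence, so that Stokes' theorem applies; one should also check that the normalizations of $\th\wedge d\th$ underlying $\int_M Q'$ and the expression for $\wt T C$ coincide, which holds once $h_{1\conj1}=1$ is imposed as in \eqref{normalframe}.
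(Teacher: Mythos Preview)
Your proposal is correct and follows the same line as the paper: both combine the explicit formula \eqref{Q4form} with the reduction of $\wt T C$ under the normalized coframe \eqref{normalframe}, observing that the $\Delta_b\Scal$ term integrates to zero so that $\int_M Q'=\int_M(\tfrac12\Scal^2-2|A|^2)\,\th\wedge d\th=-8\pi^2\int_M\wt T C$. The paper's ``proof'' is precisely the computation you outline, summarized by the words ``We have thus proved''; you are simply more explicit about the divergence step.
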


In conformal geometry, Alexakis \cite{A} showed that any, local, volume form-valued, Riemannian invariant $I(g)$ whose integral is a conformal invariant for any 
 compact manifold can be decomposed into three parts:
$$
I(g)=c\,\operatorname{Pfaffian}+\text{(local conformal invariant)}+\text{(exact form)}.
$$
As a natural analogue of Alexakis' theorem in CR setting, we make the following conjecture:
If $I(\th)\in\calE(-n-1)$ is a local pseudo-hermitian invariant such that the integral is independent of the choice of $\th\in\calPE$, then
$$
I(\th)=c\, Q'+\text{(local CR invariant)}+\text{(exact form)}
$$
for any $\th\in\calPE$.

In case $n=1$, we can verify this by a direct computation:
By Lemma \ref{lemQp}, we know that $I(\th)$ modulo exact forms is of the form
$c_1 \Scal^2+c_2 |A|^2$. On the other hand, we know that $2Q'=\Scal^2-4|A|^2+\text{(exact form)}$.  Since $\int_M\Scal^2$ is not a CR invariant for $\th\in\calPE$,
we  have
$$
I(\th)=c_1 Q'+\text{(exact form)}.
$$
Here we do not have the CR invariant part, simply because there are no CR invariant in
$\calE(-2)$ on $M^3$; see \cite{Gr1}, \cite{HKN}, \cite{H2}.
The conjecture is open for higher dimensions.

\subsection{Variational formula}\label{variation-ss}
We finally derive the variational formula \eqref{QPvar}.
Let $Z_1$ be a frame of $T^{1,0}$. Then we may define a one-parameter family of CR structure by the frame
$$
Z_1^\epsilon=Z_1+\varphi^\epsilon{}_{1}{}^{\conj 1}Z_{\conj 1}
$$
for $\varphi^\epsilon{}_{11}\in\calE_{11}(1)$ which smoothly depends on $\epsilon\in\bR$.
Note that this family is independent of the choice of frame $Z_1$ and is intrinsically defined from  $\varphi^\epsilon{}_{11}$.  We will denote the family of CR manifolds by $\{M_\epsilon\}$.  Then, by \cite[Prop.\ 3.3]{BE1} and \cite[Prop.\ 2.6]{CL},
we have
$$
\frac{d}{d\epsilon}\Big|_{\epsilon=0}
\mu(M_\epsilon)=-\frac{1}{4\pi^2}\Re \int_{M} \varphi_{11}\calO^{11},
$$
where $
\varphi_{11}=({d}/{d\epsilon})\big|_{\epsilon=0}\varphi^\epsilon{}_{11}$ and $\calO^{11}\in\calE^{11}(-3)$ is the Cartan curvature of $M$:
$$
\calO^{11}=\frac16 \Scal^{11}-\frac{i}2\Scal A^{11}-A^{11}{}_{,0}+\frac{2i}{3}A{}^{11}{}_{,1}{}^1.
$$ 

We  consider the special case when the deformation is given by a family of strictly pseudoconvex hypersurfaces
$\{M_\epsilon\}$ in a fixed complex manifold
$X$.  Such deformations are called {\em Kuranishi wiggles} and it is known that 
the first variation $\varphi_{11}$ can be written as the second derivative of a density:
$$
\varphi_{11}=P_{11}f:=(\nabla_{11}+iA_{11})f, \quad f\in\calE(1).
$$
Note that $P_{11}\colon\calE(1)\to\calE_{11}(1)$ is a CR invariant operator and so is the formal adjoint
$$
P^*_{11}\colon\calE^{11}(-3)\to\calE(-3).
$$
These are respectively the first and the last of the deformation complex, which is the BGG complex for the adjoint representation of $\SU(n+1,1)$; see \cite{CSS}.
Using the double divergence, we can define a CR invariant $\calO=P^*_{11}\calO^{11}\in\calE(-3)$.
It is real valued, since
the Bianchi identity for the Cartan curvature \cite[Prop.\ 3.1]{CL} gives
$
\Im P^*_{11}\calO^{11}=0
$.  
Recall from \cite{Gr1} that there is a unique CR invariant in $\calE(-3)$
and $\calO$  agrees (up to a constant multiple) with the obstruction function 
$\eta|_\calN\in\calE(-3)$ defined by \eqref{Ric-obs}.  If $\varphi_{11}=P_{11}f$, $f\in\calE(1)$, then integration by parts gives
\begin{equation}\label{var-tQ}
\frac{d}{d\epsilon}\Big|_{\epsilon=0}
\mu(M_\epsilon)=-\frac{1}{4\pi^2}\Re \int_M f\calO.
\end{equation}
Observe that the imaginary part of $f$ does not contribute to the integral since $\calO$ is real.  This is consistent with the fact that $\Im f$ corresponds to
the trivial deformation given by the pull-back by contact diffeomorphisms; see \cite[Lem.\ 3.4]{CL}.  If we further assume that each $M_\epsilon$ admits a pseudo-Einstein contact form,
then $\conj Q' =-8\pi^2\mu$ and we obtain \eqref{QPvar}.

\section{Appendix: Hartogs' theorem for pluriharmonic functions}

In Theorem \ref{selfP}, we have assumed that $M$ is the boundary of a domain $D$ in a Stein manifold.  But we can weaken it since we have only used the following two facts:
\begin{enumerate}
\item
$D$ admits asymptotically Einstein K\"ahler metric;
\item
Any
$f\in\calP$ has pluriharmonic extension to $D$.
\end{enumerate}

We know by Theorem \ref{CYthm} that (1) holds if $\calK_D>0$.
While the asymptotically Einstein condition seems to be weaker than the Einstein condition, so far, we do not have examples beyond this class.

For the Hartogs extension (2), we can give a more general result
which is suggested by Takeo Ohsawa.

\begin{thm}\label{phextthm}
 Let $D$ be a bounded strictly pseudoconvex domain with connected boundary in a K\"ahler manifold $X$ of dimension $n+1\ge3$.  Then CR pluriharmonic functions on $\pa D$ can be extended to pluriharmonic functions on $D$.
\end{thm}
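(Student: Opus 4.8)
The plan is to reduce the statement to a $\pa\conj\pa$-equation with compact support inside $D$, and to solve that equation by the $L^2$ theory on a complete K\"ahler metric, the boundary dimension entering only through the topology of $D$. First I would produce the extension near the boundary. By strict pseudoconvexity, a CR pluriharmonic function on $M=\pa D$ is the boundary value of a pluriharmonic function $u$ on $V:=U\cap\conj D$ for some connected neighborhood $U$ of $M$ in $X$ (this is the extension property already used in \S3 and \S4). Choose $\widetilde u\in C^\infty(\conj D,\bR)$ with $\widetilde u=u$ near $M$. Then $\omega:=i\pa\conj\pa\widetilde u$ is a real, $d$-closed $(1,1)$-form whose support is a compact subset of $D$, and it suffices to find $v\in C^\infty_c(D,\bR)$ with $i\pa\conj\pa v=\omega$: the function $\widetilde u-v$ is then pluriharmonic on all of $D$ and coincides with $u$ near $M$, so its boundary value is $f$; connectedness of $M$, hence of $U$, makes this extension unambiguous.

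The analytic core is the compactly supported equation $i\pa\conj\pa v=\omega$. I would factor it through the closed holomorphic $1$-form $\phi:=\pa u$ on the collar $V$. The $(1,0)$-form $\beta:=i\pa\widetilde u$ is globally defined and smooth on $\conj D$, is $\pa$-closed, equals $i\phi$ near $M$, and satisfies $\conj\pa\beta=-\omega$. If $\phi$ extends to a holomorphic $1$-form $\Phi$ on $D$, then $\Phi$ is automatically $d$-closed, since $\pa\Phi$ is a holomorphic $2$-form vanishing near $M$ on the connected manifold $D$, hence $\equiv0$; consequently $\beta-i\Phi$ is a compactly supported $\conj\pa$-primitive of $-\omega$ which is moreover $\pa$-closed. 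Conjugating and applying compactly supported $\conj\pa$-solvability once more on $D$ yields $v$ with $i\pa\conj\pa v=\omega$, and replacing $v$ by its real part makes it real-valued. Thus everything reduces to two assertions about $D$: (i) the closed holomorphic $1$-form $\phi$ on $V$ extends to a holomorphic $1$-form on $D$; and (ii) $\conj\pa$ can be solved with compactly supported data \emph{and} compactly supported solution in bidegrees $(0,1)$ and $(1,1)$.

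The main obstacle is assertion (i), and this is precisely where the hypotheses enter. The de Rham class of $\phi$ lies in $H^1(V,\bC)\cong H^1(M,\bC)$, and its extendability is governed by the restriction map $H^1(D,\bC)\to H^1(M,\bC)$ together with a compactly supported Dolbeault obstruction. By Morse theory applied to a strictly plurisubharmonic exhaustion of $D$, the manifold $\conj D$ is obtained from a collar of $M$ by attaching cells of real dimension $\ge n+1$; hence $H^k(M,\bC)\cong H^k(\conj D,\bC)$ for $k\le n-1$, and in particular for $k=1$ exactly when $n+1\ge3$. So the topological part of the obstruction vanishes under the stated dimension hypothesis, and connectedness of $M$ is what finally pins down the additive constant so that $u=f$ on $M$.

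For the analytic input (ii), $D$ is a relatively compact strictly pseudoconvex domain, hence a Grauert ($1$-convex) domain, and the required compactly supported $\conj\pa$-solvability is the Serre dual of Grauert's finiteness theorem; alternatively --- and this is the route I would actually carry out, since it keeps the estimates uniform up to $M$ --- one extends the complete asymptotically Einstein K\"ahler metric $g$ of Theorem \ref{CYthm} to a complete K\"ahler metric on $D$ and uses H\"ormander's $L^2$ estimates on $(D,g)$, where the compact support of the data trivializes the curvature hypotheses, finishing with elliptic regularity to recover smoothness. The genuinely delicate point throughout is to arrange that the solution has compact support rather than merely rapid decay toward $M$; this is what forces one to invoke the topological vanishing above, and hence the assumptions that $\dim_{\bC}X=n+1\ge3$ and that $M$ is connected.
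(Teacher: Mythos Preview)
Your overall reduction is exactly the paper's route: extend $f$ pluriharmonically to a collar, form the holomorphic $1$-form $\phi=\pa\widetilde u$, extend $\phi$ across the compact hole (Hartogs for holomorphic $1$-forms), and then correct $\widetilde u$ by a compactly supported function using the $\conj\pa$-Poincar\'e lemma with compact support. The paper carries this out in three lines by quoting the two needed black boxes --- Ohsawa's Hartogs extension for holomorphic $p$-forms with $p\le n-1$ on strictly pseudoconvex domains in K\"ahler manifolds, and the Grauert--Riemenschneider compactly supported $\conj\pa$-lemma --- both of which are valid in the K\"ahler (not just Stein) setting.

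The gap in your write-up is in the justification of (i). You invoke ``Morse theory applied to a strictly plurisubharmonic exhaustion of $D$'' to get $H^1(\conj D,\bC)\cong H^1(M,\bC)$. But a global strictly plurisubharmonic exhaustion exists precisely when $D$ is Stein, and the theorem is stated for arbitrary K\"ahler $X$; the paper itself points to non-Stein examples such as unit disk bundles in negative line bundles. So the handle-attachment argument, as written, does not apply, and even if the de Rham isomorphism happened to hold, you would still need the K\"ahler input to upgrade the closed extension to a holomorphic one. This is exactly what Ohsawa's theorem packages; the paper simply cites it rather than trying to re-derive it via topology.

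There is a smaller issue with your alternative route for (ii): invoking the asymptotically Einstein metric of Theorem~\ref{CYthm} imports the extra hypothesis $\calK_D>0$, which is not assumed in Theorem~\ref{phextthm}. Your first suggestion (Serre-dual of Grauert's theorem for $1$-convex domains) is the right direction --- and is essentially what the paper's citation of Grauert--Riemenschneider provides --- but note that one needs a \emph{vanishing} theorem here, not mere finiteness, and this is where the K\"ahler hypothesis on $X$ is used.
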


\begin{proof}
We recall two theorems from the $L^2$-theory of several complex variables.  Here $D$ satisfies the assumption of the theorem above;
 more general statements can be found, respectively, in  \cite[Cor.\ 7]{Oh1} and  \cite[Satz 7]{GR}.  

\medskip
\noindent
{\bf Hartogs' theorem for holomorphic forms.} 
Let  $K\subset D$ be a compact subset such that $D\setminus K$ is connected.  If $p\le n-1$, then 
any holomorphic $p$-form on $D\setminus K$ has holomorphic extension to $D$.

\medskip
\noindent
{\bf $\conj\pa$-Poincar\'e lemma with compact support.}
If $\eta$ is a closed $(0,1)$-form with compact support, then there exists an
$h\in C_0^\infty(D)$ such that $\eta=\conj\pa h$.

\medskip
For $f\in\calP$, take an extension $\wt f$ such that $\pa\conj\pa\wf=0$ outside a compact set $K\subset D$. 
 Then
$\pa\wf$ is holomorphic on $D\setminus K$ and thus we may find a holomorphic 1 form $\varphi$ such that
$u:=\pa\wf-\varphi$ has compact support. Applying $\pa$ gives
$\pa u=-\pa\varphi$. The left-hand side has compact support, while the right-hand side is holomorphic; hence $\pa u=0$ on $D$.  Thus we can apply (the conjugate of) $\conj\pa$-Poincar\'e lemma 
to find an $h\in C^\infty_0(D)$ such that $\pa h=u$.
Then, in view of
$$
\conj\pa\pa h=\conj\pa u=\conj\pa\pa\wt f-\conj\pa\varphi=\conj\pa\pa\wt f,
$$
we see that $\wt f-\Re h$ is pluriharmonic on $D$ and its boundary value is $f$.
\end{proof}

The restriction on dimension, $n\ge2$, is caused by the condition $p\le n-1$ in Hartogs' theorem for holomorphic $p$-forms.  This should not be essential but we will not go into the detail of this point in this paper.  

\section*{Acknowledgements}
We would like to thank Jeffrey Case and Paul Yang for generously sharing ideas on $P'$ and $Q'$.
  We would also like to thank Mike Eastwood, Robin Graham, Yoshihiko Matsumoto,
  Kimio Miyajima,
Bent \O rsted, Takeo Ohsawa and
Michael Range for helpful discussions. 
We are grateful to the referees for a number of helpful suggestions for improvement in the article.

\end{document}